\newcommand\R{{\mathbb{R}}}
\newcommand\C{{\mathbb{C}}}
\renewcommand\P{{\mathbf{P}}}
\newcommand\E{{\mathbf{E}}}
\newcommand\Var{\mathbf{Var}}
\renewcommand\Im{{\operatorname{Im}}}
\renewcommand\Re{{\operatorname{Re}}}
\newcommand\eps{{\varepsilon}}
\newcommand\tr{\operatorname{trace}}
\newcommand\Dyson{{\operatorname{Sine}}}
\renewcommand\th{{\operatorname{th}}}
\newcommand\condo{{{\bf C0}}}
\newcommand\condone{{{\bf C1}}}
\newcommand\CN{{\mathcal N}}
\subjclass{15A52}
\newcommand\cl{{\operatorname{cl}}}
\renewcommand\sc{{\operatorname{sc}}}
\newcommand\Ai{{\operatorname{Ai}}}
\newcommand\Sine{{\operatorname{Sine}}}
\theoremstyle{plain}
  \newtheorem{theorem}{Theorem}
  \newtheorem{proposition}[theorem]{Proposition}
  \newtheorem{lemma}[theorem]{Lemma}
  \newtheorem{corollary}[theorem]{Corollary}
\theoremstyle{definition}
  \newtheorem{definition}[theorem]{Definition}
  \newtheorem{example}[theorem]{Example}
  \newtheorem{remark}[theorem]{Remark}
\begin{document}

\title[Universality for Wigner ensembles]{Random matrices:\\  The Universality phenomenon for Wigner ensembles}

\author{Terence Tao}
\address{Department of Mathematics, UCLA, Los Angeles CA 90095-1555}
\email{tao@math.ucla.edu}
\thanks{T. Tao is supported by a grant from the MacArthur Foundation, by NSF grant DMS-0649473, and by the NSF Waterman award.}

\author{Van Vu}
\address{Department of Mathematics, Yale, New Haven, CT 06520}
\email{van.vu@yale.edu}
\thanks{V. Vu is supported by research grants DMS-0901216 and AFOSAR-FA-9550-09-1-0167.}

\begin{abstract}  In this paper, we survey some recent progress on rigorously establishing the universality of various spectral
 statistics of Wigner Hermitian random matrix ensembles, focusing on the Four Moment Theorem and its 
refinements and applications, including 
the universality of the sine kernel  and the
Central limit theorem of several spectral parameters.  

We also take the opportunity here to issue some errata for some of our previous papers in this area.
\end{abstract}

\maketitle

\setcounter{tocdepth}{2}
% \tableofcontents

\section{Introduction}

%\subsection{Random Matrix Theory and the Universality Phenomenon}

\emph{Random matrix theory} is a central topic in probability and mathematical physics, with many connections to various areas such as statistics, number theory, combinatorics, numerical analysis and theoretical computer science. 

One of the primary goal of random matrix theory is to derive limiting laws for the eigenvalues and eigenvectors of ensembles of large ($n \times n$) Hermitian random matrices\footnote{One can of course also study non-Hermitian or non-square random matrices, though in the latter case the concept of an eigenvalue needs to be replaced with that of a singular value.  In this survey we will focus almost exclusively on the square Hermitian case.}, in the asymptotic limit $n \to \infty$.  There are many random matrix ensembles of interest, but to focus the discussion and to simplify the exposition we shall restrict attention to an important model class of ensembles, the \emph{Wigner matrix ensembles}.

\begin{definition}[Wigner matrices]\label{def:Wignermatrix}  Let $n \geq 1$ be an integer (which we view as a parameter going off to infinity). An $n \times n$ \emph{Wigner Hermitian matrix} $M_n$ is defined to be a  random Hermitian $n \times n$ matrix $M_n = (\xi_{ij})_{1 \leq i,j \leq n}$, in which the $\xi_{ij}$ for $1 \leq i \leq j \leq n$ are jointly independent with $\xi_{ji} = \overline{\xi_{ij}}$ (in particular, the $\xi_{ii}$ are real-valued).  For $1 \leq i < j \leq n$, we require that the $\xi_{ij}$ have mean zero and variance one, while for $1 \leq i=j \leq n$ we require that the $\xi_{ij}$ (which are necessarily real) have mean zero and variance $\sigma^2$ for some $\sigma^2>0$ independent of $i,j,n$.   To simplify some of the statements of the results here, we will also assume that the $\xi_{ij} \equiv \xi$ are identically distributed for $i < j$, and the $\xi_{ii} \equiv \xi'$ are also identically distributed for $i=j$, and furthermore that the real and imaginary parts of $\xi$ are independent.  We refer to the distributions $\Re \xi$, $\Im \xi$, and $\xi'$ as the \emph{atom distributions} of $M_n$.

We say that the Wigner matrix ensemble \emph{obeys Condition {\condo}} if we have the exponential decay condition
\begin{equation*}
\P(|\xi_{ij}|\ge t^C) \le e^{-t} 
\end{equation*}
for all $1 \leq i,j \leq n$ and $t \ge C'$, and some constants $C, C'$ (independent of $i,j,n$).  We say that the Wigner matrix ensemble \emph{obeys condition {\condone} with constant $C_0$} if one has
$$ \E |\xi_{ij}|^{C_0} \leq C$$
for some constant $C$ (independent of $n$).
\end{definition}

Of course, Condition {\condo} implies Condition {\condone} for any $C_0$, but not conversely.

We refer to the matrix $W_n := \frac{1}{\sqrt{n}} M_n$ as the \emph{coarse-scale normalised Wigner Hermitian matrix}, and $A_n := \sqrt{n} M_n$ as the \emph{fine-scale normalised Wigner Hermitian matrix}.

\begin{example}[Invariant ensembles]\label{gue}  An important special case of a Wigner Hermitian matrix $M_n$ is the \emph{gaussian unitary ensemble} (GUE), in which $\xi_{ij} \equiv N(0,1)_\C$ are complex gaussians with mean zero and variance one for $i \neq j$, and $\xi_{ii} \equiv N(0,1)_\R$ are real gaussians with mean zero and variance one for $1 \leq i \leq n$ (thus $\sigma^2 = 1$ in this case).  Another important special case is the \emph{gaussian orthogonal ensemble} (GOE), in which $\xi_{ij} \equiv N(0,1)_\R$ are real gaussians with mean zero and variance one for $i \neq j$, and $\xi_{ii} \equiv N(0,1/2)_\R$ are real gaussians with mean zero and variance $1/2$ for $1 \leq i \leq n$ (thus $\sigma^2=1/2$ in this case).  These ensembles obey Condition {\condo}, and hence Condition {\condone} for any $C_0$.  For these two ensembles, the probability distribution of $M_n$ can be expressed invariantly in either case as
\begin{equation}\label{cmm}
 \frac{1}{Z_n} e^{-c \operatorname{tr} M_n M_n^*} dM_n
\end{equation}
for some quantity $Z_n>0$ depending only on $n$, where $dM_n$ is Haar measure on the vector space of $n \times n$ Hermitian matrices (in the case of GUE) or real symmetric matrices (in the case of GOE), and $c$ is equal to $1/2$ (for GUE) or $1/4$ (for GOE).  From \eqref{cmm} we easily conclude that the probability distribution of GUE is invariant with respect to conjugations by unitary matrices, and similarly the probability distribution of GOE is invariant with respect to conjugations by orthogonal matrices.  However, a general Wigner matrix ensemble will not enjoy invariances with respect to such large classes of matrices.  (For instance, the Bernoulli ensembles described below are only invariant with respect to conjugation by a discrete group of matrices, which include permutation matrices and reflections around the coordinate axes.)
\end{example}

\begin{example}[Bernoulli ensembles]\label{bernoulli}  At the opposite extreme from the invariant ensembles are the \emph{Bernoulli ensembles}, which are discrete instead of continuous.  In the \emph{real Bernoulli ensemble} (also known as \emph{symmetric random sign matrices}), each of the $\xi_{ij}$ are equal to $+1$ with probability $1/2$ and $-1$ with probability $1/2$.  In the \emph{complex Bernoulli ensemble}, the diagonal entries $\xi_{ii}$ still have this distribution, but the off-diagonal entries now take values\footnote{We use $\sqrt{-1}$ to denote the imaginary unit, in order to free up the symbol $i$ as an index variable.} $\pm \frac{1}{\sqrt{2}} \pm \frac{\sqrt{-1}}{\sqrt{2}}$, with each of these four complex numbers occuring with probability $1/4$.
\end{example}

\begin{remark}  Many of the results given here have been extended to somewhat broader classes of matrices.  For instance, one can consider generalised Wigner ensembles in which entries are not identically distributed; for instance, one can allow the variances $\sigma_{ij}^2$ of each entry $\xi_{ij}$ to vary in $i,j$, and even vanish for some $i,j$; the latter situation occurs for instance in band-limited random matrices; one can also consider allowing the mean $\mu_{ij}$ of the entries $\xi_{ij}$ to be non-zero (this is for instance the situation with the adjacency matrices of Erd\H{o}s-Renyi random graphs).  One can also consider sparse Wigner random matrices, in which only a small (randomly selected) number of entries are non-zero.  For simplicity, though, we shall mostly restrict attention in this survey to ordinary Wigner ensembles.  We do remark, however, that in all of these generalisations, it remains crucial that the entries $\xi_{ij}$ for $1 \leq i \leq j \leq n$ are jointly independent, as many of the techniques currently available to control the fine-scale spectral structure of Wigner matrices rely heavily on joint independence.  
\end{remark}

Given an $n \times n$ Hermitian matrix $A$, we denote its $n$ eigenvalues in increasing order\footnote{It is also common in the literature to arrange eigenvalues in decreasing order instead of increasing.  Of course, the results remain the same under this convention except for minor notational changes.} as
$$ \lambda_1(A) \leq \ldots \leq \lambda_n(A),$$
and write $\lambda(A) := (\lambda_1(A),\ldots,\lambda_n(A))$.  We also let $u_1(A),\ldots,u_n(A) \in \C^n$ be an orthonormal basis of eigenvectors of $A$ with $A u_i(A) = \lambda_i(A) u_i(A)$; these eigenvectors $u_i(A)$ are only determined up to a complex phase even when the eigenvalues are simple (or up to a sign in the real symmetric case), but this ambiguity will not cause much of a difficulty in our results as we will usually only be interested in the \emph{magnitude} $|u_i(A)^* X|$ of various inner products $u_i(A)^* X$ of $u_i(A)$ with other vectors $X$.

We also introduce the \emph{eigenvalue counting function}
\begin{equation}\label{eigencount}
 N_I(A) := | \{ 1 \leq i \leq n: \lambda_i(A) \in I \}|
\end{equation}
for any interval $I \subset \R$.  We will be interested in both the \emph{coarse-scale} eigenvalue counting function $N_I(W_n)$ and the \emph{fine-scale} eigenvalue counting function $N_I(A_n)$, which are of course transformable to each other by the identity $N_I(W_n) =N_{nI}(A_n)$.

\section{Global and local semi-circular laws}\label{semi-sec}

We first discuss the coarse-scale spectral structure of Wigner matrices, that is to say the structure of the eigenvalues of $W_n$ at unit scales (or equivalently, the eigenvalues of $M_n$ at scale $\sqrt{n}$, or $A_n$ at scale $n$).  The fundamental result in this topic is the \emph{(global) Wigner semi-circular law}.  Denote by $\rho_{sc}$ the semi-circle density function with
support on $[-2,2]$,
\begin{equation}\label{semi}
 \rho_{sc} (x):= \begin{cases} \frac{1}{2\pi} \sqrt {4-x^2}, &|x| \le 2 \\ 0,
&|x| > 2. \end{cases} 
\end{equation}

\begin{theorem}[Global semi-circular law]\label{theorem:Wigner} Let $M_n$ be a Wigner Hermitian matrix.  Then for any interval $I$ (independent of $n$), one has
$$\lim_{n \rightarrow \infty} \frac{1}{n} N_I[W_n]
 = \int_I \rho_{sc}(y)\ dy$$
in the sense of probability (and also in the almost sure sense, if the $M_n$ are all minors of the same infinite Wigner Hermitian matrix).
\end{theorem}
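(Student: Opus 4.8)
The plan is to show that the empirical spectral measure $\mu_n := \frac1n \sum_{i=1}^n \delta_{\lambda_i(W_n)}$ converges, in the appropriate probabilistic sense, to the semicircular measure $\rho_{sc}(y)\,dy$ in the weak topology. Since $\rho_{sc}$ is absolutely continuous, the boundary of any interval $I$ is $\rho_{sc}$-null, so weak convergence yields $\frac1n N_I[W_n] = \mu_n(I) \to \int_I \rho_{sc}(y)\,dy$ by sandwiching $\mathbf 1_I$ between continuous functions. Because $\rho_{sc}$ has compact support it is uniquely determined by its moments $m_k := \int_\R x^k \rho_{sc}(x)\,dx$ (with $m_{2m}=C_m$ the $m$-th Catalan number and $m_{2m+1}=0$), so by the method of moments it suffices to prove, for each fixed $k$, that the random $k$-th moment $\int x^k\,d\mu_n(x) = \frac1n\trace(W_n^k)$ converges to $m_k$ --- in probability for the first assertion, almost surely for the nested-minors assertion.

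First I would reduce to the case of bounded atom distributions, since the hypotheses only furnish two finite moments whereas the moment method needs all moments finite. Fix a large $K$, set $\hat\xi_{ij} := \xi_{ij}\mathbf 1_{|\xi_{ij}|\le K} - \E(\xi_{ij}\mathbf 1_{|\xi_{ij}|\le K})$, rescale to restore the correct variance, and let $\hat W_n$ (with empirical measure $\hat\mu_n$) be the resulting normalised Wigner matrix with bounded atoms. By the Hoffman--Wielandt inequality,
\[
\frac1n \sum_{i=1}^n |\lambda_i(W_n) - \lambda_i(\hat W_n)|^2 \;\le\; \frac{1}{n^2}\sum_{1\le i,j\le n} |\xi_{ij} - \hat\xi_{ij}|^2 ,
\]
whose expectation equals $\E|\xi-\hat\xi|^2 + o(1)$ and tends to $0$ as $K\to\infty$ uniformly in $n$ by the finite-variance hypothesis (and the same bound holds almost surely in the nested setting, by the strong law of large numbers applied to the right-hand side). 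Since this controls the difference of $\mu_n$ and $\hat\mu_n$ in, say, Wasserstein distance, one transfers the conclusion from the bounded case to the general one by sending $K\to\infty$ after $n\to\infty$.

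For bounded atoms the core is the classical walk expansion
\[
\E\,\frac1n\trace(W_n^k) \;=\; \frac{1}{n^{1+k/2}}\sum_{i_1,\dots,i_k=1}^n \E\bigl(\xi_{i_1 i_2}\xi_{i_2 i_3}\cdots\xi_{i_k i_1}\bigr),
\]
each tuple $(i_1,\dots,i_k)$ being read as a closed walk of length $k$ in the complete graph on $n$ vertices. By joint independence and the mean-zero hypothesis a walk contributes only if every edge is traversed at least twice; a counting argument shows there are $O(n^{\lfloor k/2\rfloor+1})$ such walks, so after dividing by $n^{1+k/2}$ the odd moments are $O(n^{-1/2})$ and vanish, while for $k=2m$ the leading term counts exactly the $C_m$ depth-first traversals of a plane tree with $m$ edges (each edge crossed twice, carrying a product of $m$ variance-$1$ factors), giving $\E\frac1n\trace(W_n^{2m}) = C_m + O(1/n)$; boundedness of the atoms is precisely what controls the sub-leading walks (edges used three or more times, or diagonal self-loops). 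A parallel computation for a pair of walks, again using independence, gives $\Var\bigl(\frac1n\trace(W_n^k)\bigr) = O(1/n^2)$, whence convergence in probability follows from Chebyshev's inequality, and --- the bound being summable in $n$ --- almost sure convergence of each moment follows from Borel--Cantelli; intersecting over $k\in\BBN$ yields almost sure weak convergence of $\mu_n$ in the nested case.

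The step I expect to be the main obstacle is not the Catalan-number computation, which is routine, but making the truncation/comparison argument fully rigorous while respecting the coupling demanded by the almost-sure statement: one must juggle the Hoffman--Wielandt stability estimate, the rescaling factors tending to $1$, and the order of the limits $K\to\infty$ and $n\to\infty$, and in the nested case verify that the exceptional null sets can be assembled along a countable sequence of truncation levels.
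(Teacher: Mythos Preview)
Your proof is correct and follows the classical moment-method route. The paper does not give a self-contained proof of this theorem; it attributes the result to Wigner and Pastur and refers to standard texts (Bai--Silverstein, Anderson--Guionnet--Zeitouni) for details. In the surrounding discussion the paper sketches both the moment method --- essentially your walk-expansion argument, though without the truncation step spelled out --- and the Stieltjes transform method, and it is the latter that receives the more detailed treatment: one shows that $s_n(z) = \frac1n\trace(W_n-z)^{-1}$ approximately satisfies the self-consistent equation $s_n(z) \approx -1/(s_n(z)+z)$ via a Schur-complement identity and concentration of the quadratic form $X_n^*(\tilde W_{n-1}-zI)^{-1}X_n$, and then deduces $s_n(z)\to s_{\sc}(z)$ by a stability analysis. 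Your moment-method argument is more elementary and combinatorially transparent, and the $O(1/n^2)$ variance bound it produces makes the almost-sure statement immediate via Borel--Cantelli; the Stieltjes-transform approach, on the other hand, is the one that refines to the \emph{local} semicircle law (Theorem~\ref{lsc}) which the paper actually relies on later, so the paper's emphasis is pedagogically motivated rather than a verdict on which proof of the global law is cleanest.
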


\begin{remark} Wigner \cite{wig} proved this theorem for special ensembles. The general
 version  above is due to  Pastur \cite{Pas} (see \cite{BSbook, AGZ} for a detailed discussion).  The semi-circular law in fact holds under substantially more general hypotheses than those given in Definition \ref{def:Wignermatrix}, but we will not discuss this matter further here. One consequence of Theorem \ref{theorem:Wigner} is that we expect most of the eigenvalues of $W_n$ to lie in the interval $(-2+\eps,2+\eps)$ for $\eps > 0$ small; we shall thus informally refer to this region as the \emph{bulk} of the spectrum.
\end{remark} 

An essentially equivalent\footnote{This formulation is slightly stronger because it also incorporates the upper bound $\|W_n\|_{op} \leq 2+o(1)$ on the operator norm of $W_n$, which is consistent with, but not implied by, the semi-circular law, and follows from the work of Bai and Yin \cite{bai-yin}.}  formulation of the semi-circular law is as follows: if $1 \leq i \leq n$, then one has\footnote{We use the asymptotic notation $o(1)$ to denote any quantity that goes to zero as $n \to \infty$, and $O(X)$ to denote any quantity bounded in magnitude by $CX$, where $C$ is a constant independent of $n$.}
\begin{equation}\label{wwn}
 \lambda_i(W_n) = \lambda_i^{\cl}(W_n) + o(1)
\end{equation}
with probability $1-o(1)$ (and also almost surely, if the $M_n$ are minors of an infinite matrix), where the \emph{classical location} $\lambda_i^{\cl}(W_n)$ of the $i^{th}$ eigenvalue is the element of $[-2,2]$ defined by the formula
$$ \int_{-2}^{\lambda_i^{\cl}(W_n)} \rho_\sc(y)\ dy = \frac{i}{n}.$$

A remarkable feature of the semi-circular law is its \emph{universality}: the precise distribution of the atom variables $\xi_{ij}$ are irrelevant for the conclusion of the law, so long as they are normalised to have mean zero and variance one (or variance $\sigma^2$, on the diagonal), and are jointly independent on the upper-triangular portion of the matrix.  In particular, continuous matrix ensembles such as GUE or GUE, and discrete matrix ensembles such as the Bernoulli ensembles, have the same asymptotic spectral distribution when viewed at the coarse scale (i.e. by considering eigenvalue ranges of size $\sim 1$ for $W_n$, or equivalently of size $\sim \sqrt{n}$ for $M_n$ or $\sim n$ for $A_n$).

However, as stated, the semi-circular law does not give good control on the \emph{fine-scale} behaviour of the eigenvalues, for instance in controlling $N_I(W_n)$ when $I$ is a very short interval (of length closer to $1/n$ than to $1$).  The fine-scale theory for Wigner matrices is much more recent than the coarse-scale theory given by the semi-circular law, and is the main focus of this survey.

There are several ways to establish the semi-circular law.  For invariant ensembles such as GUE or GOE, one can use explicit formulae for the probability distribution of $N_I$ coming from the theory of determinantal processes, giving precise estimates all the way down to infinitesimally small scales; see e.g. \cite{AGZ} or Section \ref{gue-sec} below.  However, these techniques rely heavily on the invariance of the ensemble, and do not directly extend to more general Wigner ensembles.  Another popular technique is the \emph{moment method}, based on the basic moment identities
\begin{equation}\label{moment-sum}
 \sum_{i=1}^n \lambda_i(W_n)^k = \tr W_n^k = \frac{1}{n^{k/2}} \tr M_n^k
\end{equation}
for all $k \geq 0$.  The moment method already is instructive for revealing at least one explanation for the universality phenomenon.  If one takes expectations in the above formula, one obtains
$$ \E \sum_{i=1}^n \lambda_i(W_n)^k = \frac{1}{n^{k/2}} \sum_{1 \leq i_1,\ldots,i_k \leq n} \E \xi_{i_1 i_2} \ldots \xi_{i_k i_1}.$$
For those terms for which each edge $\{i_j,i_{j+1}\}$ appears at most twice, the summand can be explicitly computed purely in terms of the mean and variances of the $\xi_{ij}$, and are thus universal.  Terms for which an edge appears three or more times are sensitive to higher moments of the atom distribution, but can be computed to give a contribution of $o(1)$ (at least assuming a decay condition such as Condition \condo).  This already explains universality for quantities such as $\E N_I(W_n)$ for intervals $I$ of fixed size (independent of $n$), at least if one assumes a suitable decay condition on the entries (and one can use standard truncation arguments to relax such hypotheses substantially).

At the edges $\pm 2$ of the spectrum, the moment method can be pushed further, to give quite precise control on the most extreme eigenvalues of $W_n$ (which dominate the sum in \eqref{moment-sum}) if $k$ is sufficiently large; see \cite{ FK, sinai1, sinai2, Sos1, SP,  Vunorm} and the references therein.  However, the moment method is quite poor at controlling the spectrum in the bulk.  To improve the understanding of the bulk spectrum of Wigner matrices, Bai \cite{Bai93a, Bai93b} (see also \cite[Chapter 8]{BSbook}) used the \emph{Stieltjes transform method} (building upon the earlier work of Pastur \cite{Pas}) to show that the speed of convergence to the semi-circle was $O(n^{-1/2})$.  Instead of working with moments, one instead studied the Stieltjes transform
$$ s_n(z) = \frac{1}{n} \tr(W_n - z)^{-1} = \frac{1}{n} \sum_{i=1}^n \frac{1}{\lambda_i(W_n) - z}$$
of $W_n$, which is well-defined for $z$ outside of the spectrum of $W_n$ (and in particular for $z$ in the upper half-plane $\{ z \in \C: \operatorname{Im}(z) > 0 \}$).  To establish the semi-circular law, it suffices to show that $s_n(z)$ converges (in probability or almost surely) to $s_{\sc}(z)$ for each $z$, where
$$ s_{\sc}(z) := \int_\R \frac{1}{x-z} \rho_{\sc}(x)\ dx$$
is the Stieltjes transform of the semi-circular distribution $\rho_\sc$.  The key to the argument is to establish a \emph{self-consistent equation} for  $s_n$, which roughly speaking takes the form
\begin{equation}\label{sce}
s_n(z) \approx \frac{-1}{s_n(z)+z}.
\end{equation}
One can explicitly compute by contour integration that
$$ s_\sc(z) = \frac{1}{2} (-z + \sqrt{z^2-4})$$
for $z \neq [-2,2]$, where $\sqrt{z^2-4}$ is the branch of the square root that is asymptotic to $z$ at infinity, and in particular that
\begin{equation}\label{scz}
 s_\sc(z) = \frac{-1}{s_\sc(z)+z}.
\end{equation}
From a stability analysis of the elementary equation \eqref{scz} and a continuity argument, one can then use \eqref{sce} to show that 
\begin{equation}\label{snak}
s_n(z) \approx s_\sc(z), 
\end{equation}
which then implies convergence to the semi-circular law.  If one can obtain quantitative control on the approximation in \eqref{sce}, one can then deduce quantitative versions of the semi-circular law that are valid for certain short intervals.

We briefly sketch why one expects the self-consistent equation to hold.  One can expand 
\begin{equation}\label{expansion}
 s_n(z) = \frac{1}{n} \sum_{i=1}^n ( (W_n - zI)^{-1} )_{ii}
\end{equation}
where $( (W_n - zI)^{-1} )_{ii}$ denotes the $ii^{\th}$ entry of the matrix $(W_n-zI)^{-1}$.  Let us consider the $i=n$ term for sake of concreteness.  If one expands $W_n$ as a block matrix
\begin{equation}\label{block-matrix}
W_n-zI := \begin{pmatrix} \tilde W_{n-1} - z I& \frac{1}{\sqrt{n}} X_n \\ \frac{1}{\sqrt{n}} X_n^* & \frac{1}{\sqrt{n}} \xi_{nn} - z\end{pmatrix},
\end{equation}
where $\tilde W_{n-1} = \frac{1}{\sqrt{n}} M_{n-1}$ is the top left $n-1 \times n-1$ minor of $M_n$, and $X_n$ is the $n-1 \times 1$ column vector with entries $\xi_{n1},\ldots,\xi_{n(n-1)}$, then an application of Schur's complement yields the identity
\begin{equation}\label{snn}
 ((W_n-zI)^{-1})_{nn} = \frac{-1}{z + \frac{1}{n} X_n^* (\tilde W_{n-1}- z I)^{-1} X_n - \frac{1}{\sqrt{n}} \xi_{nn}}.
\end{equation}
The term $\frac{1}{\sqrt{n}} \xi_{nn}$ is usually negligible and will be ignored for this heuristic discussion.
Let us temporarily freeze (or condition on) the entries of the random matrix $M_{n-1}$, and hence $\tilde W_{n-1}$.  Due to the joint independence of the entries of $M_n$, the entries of $X_n$ remain jointly independent even after this conditioning.  As these entries also have mean zero and variance one, we easily compute that
$$ \E \frac{1}{n} X_n^* (\tilde W_{n-1}- z I)^{-1} X_n = \frac{1}{n} \tr (\tilde W_{n-1}- z I)^{-1}.$$
Using the Cauchy interlacing theorem
\begin{equation}\label{interlace}
\lambda_{i-1}(M_{n-1}) \leq \lambda_i(M_n) \leq \lambda_i(M_{n-1})
\end{equation}
property between the eigenvalues of $M_{n-1}$ and the eigenvalues of $M_n$ (which easily follows from the Courant-Fisher minimax formula 
$$ \lambda_i(M_n) = \inf_{V \subset \C^n; \dim(V)=i} \sup_{u \in V: \|u\|=1} u^* M_n u $$
for the eigenvalues), one easily obtains an approximation of the form
$$ \frac{1}{n} \tr (\tilde W_{n-1}- z I)^{-1} \approx s_n(z).$$
Assuming that the expression $\frac{1}{n} X_n^* (\tilde W_{n-1}- z I)^{-1} X_n$ concentrates around its mean (which can be justified under various hypotheses on $M_n$ and $z$ using a variety of concentration-of-measure tools, such as Talagrand's concentration inequality, see e.g. \cite{ledoux}), one thus has
\begin{equation}\label{snn-2}
\frac{1}{n} X_n^* (\tilde W_{n-1}- z I)^{-1} X_n \approx s_n(z)
\end{equation}
and thus
$$ ((W_n-zI)^{-1})_{nn} \approx \frac{-1}{z + s_n(z)}.$$
Similarly for the other diagonal entries $((W_n-zI)^{-1})_{ii}$.  Inserting this approximation back into \eqref{expansion} gives the desired approximation \eqref{sce}, heuristically at least.

The above argument was optimized\footnote{There are further refinements to this method in Erd\H os, Yau, and Yin \cite{EYY}, \cite{EYY2} and Erd\H os-Knowles-Yau-Yin \cite{ekyy}, \cite{ekyy2} that took advantage of some additional cancellation between the error terms in \eqref{snn} (generalised to indices $i=1,\ldots,n$) that could be obtained (via the moment method in \cite{EYY}, \cite{EYY2}, and via decoupling arguments in \cite{ekyy}, \cite{ekyy2}), to improve the error estimates further.  These refinements are not needed for the application to Wigner matrices assuming a strong decay condition such as Condition \condo, but is useful in generalised Wigner matrix models, such as sparse Wigner matrices in which most of the entries are zero, or in models where one only has a weak amount of decay (e.g. Condition {\condone}\ with $C_0=4+\eps$).  These refinements also lead to the very useful \emph{eigenvalue rigidity bound} \eqref{eigenrigid}.} in a sequence of papers \cite{ESY1, ESY2, ESY3} by Erd\H os, Schlein, and Yau (see also \cite[Section 5.2]{TVlocal1} for a slightly simplified proof).  As a consequence, one was able to obtain good estimates of the form \eqref{snak} even when $z$ was quite close to the spectrum $[-2,2]$ (e.g. at distance $O(n^{-1+\eps})$ for some small $\eps>0$, which in turn leads (by standard arguments) to good control on the eigenvalue counting function $N_I(W_n)$ for intervals $I$ of length as short as $n^{-1+\eps}$.  (Note that as there are only $n$ eigenvalues in all, such intervals are expected to only have about $O(n^\eps)$ eigenvalues in them.)  Such results are known as \emph{local semi-circular laws}.  A typical such law (though not the strongest such law known) is as follows:

\begin{theorem}[Local semi-circle law]\label{lsc}  Let $M_n$ be a Wigner matrix obeying Condition \condo, let $\eps > 0$, and let $I \subset \R$ be an interval of length $|I| \geq n^{-1+\eps}$.  Then with overwhelming probability\footnote{By this, we mean that the event occurs with probability $1 - O_A(n^{-A})$ for each $A>0$.}, one has
\begin{equation}\label{nawi}
 N_I(W_n) = n\int_I \rho_{\sc}(x)\ dx + o(n|I|).
\end{equation}
\end{theorem}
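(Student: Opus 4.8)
The plan is to run the Stieltjes transform argument sketched above in quantitative form, following Erd\H os, Schlein and Yau \cite{ESY1, ESY2, ESY3} (see also \cite[Section 5.2]{TVlocal1}). Write $z := E + \sqrt{-1}\,\eta$, with $\eta := n^{-1+\eps'}$ for a small $0<\eps'<\eps$. The first step is a reduction: it suffices to control $s_n$ at this fixed height $\eta$ above the real axis. Indeed,
\[ \Im s_n(z) = \frac{1}{n}\sum_{i=1}^n \frac{\eta}{(\lambda_i(W_n)-E)^2+\eta^2} = \pi\big(\rho_n^{\mathrm{emp}} * P_\eta\big)(E), \]
where $\rho_n^{\mathrm{emp}} := \frac1n\sum_i\delta_{\lambda_i(W_n)}$ and $P_\eta$ is the Cauchy kernel of width $\eta$, so that if we can show
\[ |s_n(z) - s_\sc(z)| = o(1) \qquad\text{with overwhelming probability} \]
uniformly for $E$ ranging over a polynomially fine net of a fixed neighbourhood of $[-2,2]$ (the remaining $E$ then being handled by the crude Lipschitz bound $|s_n(z)-s_n(z')|\le|z-z'|/\eta^2$ among $z,z'$ of imaginary part $\eta$), then \eqref{nawi} follows. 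Indeed, integrating $\Im s_n$ over $I$ and comparing with $\int_I \Im s_\sc(\cdot + \sqrt{-1}\eta) = \pi\int_I\rho_\sc + O(\eta)$, the convolution with $P_\eta$ distorts the eigenvalue count only near $\partial I$, by a total of $O(n\eta\log n) = o(n|I|)$ eigenvalues (using the crude a priori bound $N_{[E-t,E+t]}(W_n) = O(nt)$, itself a byproduct of the argument); a Helffer--Sj\"ostrand representation gives the same conclusion more cleanly. Thus the entire content is the displayed bound on $s_n - s_\sc$ at height $\eta = n^{-1+\eps'}$.

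To prove it, one derives a quantitative version of the self-consistent equation \eqref{sce}. For each $k = 1,\dots,n$, the Schur complement identity (the analogue of \eqref{snn} for the $kk$ entry, with the minor deleting row and column $k$) gives
\[ ((W_n-zI)^{-1})_{kk} = -\Big(z + \tfrac1n X_k^*(\tilde W^{(k)}-zI)^{-1}X_k - \tfrac1{\sqrt n}\xi_{kk}\Big)^{-1}, \]
where $\tilde W^{(k)}$ is the $(n-1)\times(n-1)$ minor of $W_n$ with row and column $k$ deleted and $X_k$ the corresponding off-diagonal column; note that the imaginary part of the bracketed denominator is $\ge\eta$, so there is no issue with small denominators. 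Conditioning on $\tilde W^{(k)}$, the entries of $X_k$ stay jointly independent with mean zero and variance one, and a large-deviation inequality for quadratic forms (of Hanson--Wright type) gives, with overwhelming probability,
\[ \Big|\tfrac1n X_k^*(\tilde W^{(k)}-zI)^{-1}X_k - \tfrac1n\tr(\tilde W^{(k)}-zI)^{-1}\Big| \le \frac{n^{o(1)}}{n}\,\|(\tilde W^{(k)}-zI)^{-1}\|_{\mathrm{HS}}, \]
Condition \condo\ being used here (after truncating each $\xi_{ij}$ at scale $n^{o(1)}$, which perturbs $W_n$ negligibly in operator norm) to supply the requisite moment control on $X_k$. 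Since $\frac1n\|(\tilde W^{(k)}-zI)^{-1}\|_{\mathrm{HS}} = (\tfrac1{n\eta}\Im\tr(\tilde W^{(k)}-zI)^{-1})^{1/2}$, and since the interlacing inequality \eqref{interlace} lets one replace $\frac1n\tr(\tilde W^{(k)}-zI)^{-1}$ by $s_n(z)+O(\tfrac1{n\eta})$, averaging over $k$ yields
\[ s_n(z) = \frac{-1}{z+s_n(z)} + \CE(z), \qquad |\CE(z)| \le n^{-1/2+o(1)}\Big(\tfrac{\Im s_n(z)}{\eta}\Big)^{1/2} + O\Big(\tfrac1{n\eta}\Big) \]
with overwhelming probability. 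Crucially, the error term is controlled by the local density $\Im s_n(z)$ at scale $\eta$ --- the very quantity to be estimated --- so the argument must be self-improving.

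The self-improvement is a continuity (bootstrap) argument in $\eta$. At height $\eta\sim1$ everything is elementary: the quadratic form concentrates with error $O(n^{-1/2+o(1)})$ and the fixed-point map is strongly stable away from the spectrum, so $|s_n(z)-s_\sc(z)| = o(1)$ with overwhelming probability. Decreasing $\eta$ along a multiplicatively spaced sequence down to $n^{-1+\eps'}$, the bound at the previous (larger) height bounds $\Im s_n(z)$ --- here one uses $\Im s_\sc(E+\sqrt{-1}\eta)\asymp\sqrt{\kappa+\eta}$ in the edge regime, where $\kappa := \dist(E,\{-2,2\})$, and $\asymp1$ in the bulk --- hence bounds $\CE(z)$; then the stability of the fixed-point equation \eqref{scz} (the map $w\mapsto -1/(w+z)$ is a contraction near $s_\sc(z)$ with constant $1-c\sqrt{\kappa+\eta}$) promotes this to the same, or slightly better, bound for $s_n(z)$ at the new height. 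A union bound over the net of $z$'s, plus the Lipschitz bound above, then gives the displayed estimate everywhere simultaneously with overwhelming probability, and \eqref{nawi} follows. (In practice one precedes the sharp pass by a cruder one that establishes a weak a priori control --- e.g. that no eigenvalue is anomalously isolated --- which is what feeds the quadratic-form concentration.)

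The step I expect to be the main obstacle is the behaviour of this bootstrap near the spectral edges $\pm2$: there the stability constant of \eqref{scz} degenerates like $\sqrt{\kappa+\eta}$, so each step of the iteration can lose a factor of this size, and one must propagate the edge-weight $\sqrt{\kappa+\eta}$ consistently through the self-consistent equation and the large-deviation bound so that the loss accumulated over the $O(\log n)$ steps from $\eta\sim1$ down to $n^{-1+\eps'}$ remains $o(1)$; it is in this edge analysis, rather than in the bulk, that the work of \cite{ESY1, ESY2, ESY3} is concentrated. A secondary technical point is the proof of the quadratic-form large-deviation bound under only the decay hypothesis \condo\ (rather than for, say, Gaussian entries), which is handled by the truncation mentioned above followed by a martingale (Azuma/Burkholder) inequality or a direct high-moment computation.
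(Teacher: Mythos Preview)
The paper does not actually prove this theorem: its ``proof'' consists solely of the citation ``See e.g.\ \cite[Theorem 1.10]{TVlocal2}. For the most precise estimates currently known of this type (and with the weakest decay hypotheses on the entries), see \cite{ekyy}.'' Your proposal, by contrast, gives a genuine proof sketch, and it follows precisely the Stieltjes-transform strategy that the paper outlines heuristically in the paragraphs preceding the theorem (the Schur complement identity \eqref{snn}, concentration of the quadratic form to obtain the self-consistent equation \eqref{sce}, and stability analysis of \eqref{scz}), which is in turn the approach of the cited references \cite{ESY1, ESY2, ESY3} and \cite[Section 5.2]{TVlocal1}.

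Your sketch is accurate in its main lines: the reduction to $s_n(z)-s_\sc(z)=o(1)$ at height $\eta=n^{-1+\eps'}$, the Hanson--Wright-type bound on the quadratic form conditioned on the minor, the use of interlacing to pass from the minor trace to $s_n$, and the bootstrap in $\eta$ are all correct and standard. Your identification of the edge stability (the degeneration of the contraction constant to $\sqrt{\kappa+\eta}$) as the main technical obstacle is also on target, and your remark about truncation under Condition \condo\ is the right way to handle the moment hypotheses. In short, you have supplied the content that the paper defers to the literature, and done so correctly.
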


\begin{proof} See e.g. \cite[Theorem 1.10]{TVlocal2}.  For the most precise estimates currently known of this type (and with the weakest decay hypotheses on the entries), see \cite{ekyy}.
\end{proof}

A variant of Theorem \ref{lsc}, which was established\footnote{The result in \cite{EYY2} actually proves a more precise result that also gives sharp results in the edge of the spectrum, though due to the sparser nature of the $\lambda_i^\cl(W_n)$ in that case, the error term $O_\eps(n^{-1+\eps})$ must be enlarged.} in the subsequent paper \cite{EYY2}, is the extremely useful \emph{eigenvalue rigidity property}
\begin{equation}\label{eigenrigid}
 \lambda_i(W_n) = \lambda_i^{\cl}(W_n) + O_\eps(n^{-1+\eps}),
\end{equation}
valid with overwhelming probability in the bulk range $\delta n \leq i \leq (1-\delta) n$ for any fixed $\delta>0$ (and assuming Condition \condo), and which significantly improves upon \eqref{wwn}.   This result is key in some of the strongest applications of the theory. See Section \ref{rigidity} for the precise form of this result and recent developments. 

Roughly speaking, results such as Theorem \ref{lsc} and \eqref{eigenrigid} control the spectrum of $W_n$ at scales $n^{-1+\eps}$ and above.  However, they break down at the fine scale $n^{-1}$; indeed, for intervals $I$ of length $|I|=O(1/n)$, one has $n\int_I \rho_{\sc}(x)\ dx = O(1)$, while $N_I(W_n)$ is clearly a natural number, so that one can no longer expect an asymptotic of the form \eqref{nawi}.  Nevertheless, local semicircle laws are an essential part of the fine-scale theory.  One particularly useful consequence of these laws is that of \emph{eigenvector delocalisation}:

\begin{corollary}[Eigenvalue delocalisation]\label{deloc} Let $M_n$ be a Wigner matrix obeying Condition \condo, and let $\eps > 0$.  Then with overwhelming probability, one has $u_i(W_n)^* e_j = O( n^{-1/2 + \eps} )$ for all $1 \leq i,j \leq n$.
\end{corollary}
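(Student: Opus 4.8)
The plan is to reduce the coordinate bound to a pointwise upper bound on the diagonal resolvent entries $G_{jj}(z) := ((W_n - zI)^{-1})_{jj}$ at the fine scale $\eta := n^{-1+\eps}$. The bridge is the elementary spectral identity: with $z = E + i\eta$,
\begin{equation*}
\Im G_{jj}(E + i\eta) = \sum_{k=1}^{n} \frac{\eta\,|u_k(W_n)^* e_j|^2}{(\lambda_k(W_n)-E)^2 + \eta^2},
\end{equation*}
so that, retaining only the $k=i$ term and specialising $E = \lambda_i(W_n)$,
\begin{equation*}
|u_i(W_n)^* e_j|^2 \le \eta\, \Im G_{jj}(\lambda_i(W_n)+i\eta) \le \eta\, |G_{jj}(\lambda_i(W_n)+i\eta)|.
\end{equation*}
Hence it suffices to show that, with overwhelming probability, $|G_{jj}(E+i\eta)| = O(1)$ holds simultaneously for all $1\le j\le n$ and all $E$ in a fixed compact interval containing the spectrum (say $[-3,3]$, which contains all $\lambda_i(W_n)$ with overwhelming probability by Theorem \ref{theorem:Wigner} and the operator-norm bound). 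Specialising at $E = \lambda_i(W_n)$ for each $i$ then gives $|u_i(W_n)^* e_j| = O(n^{-1/2+\eps/2})$, and after union bounds over $j$ and over the net of energies (below) this holds for all $i,j$; relabelling $\eps$ completes the corollary.

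First I would obtain this uniform resolvent bound. For a fixed $E$, the estimate $G_{jj}(E+i\eta) = s_\sc(E+i\eta) + o(1) = O(1)$ is exactly the output of the self-consistent-equation analysis sketched in Section \ref{semi-sec} — the Schur-complement identity \eqref{snn} for $G_{jj}$, the concentration estimate \eqref{snn-2}, and the stability of \eqref{scz} — run at the scale $\eta = n^{-1+\eps}$; under Condition \condo\ this is established with overwhelming probability, and the sharpened forms valid near the spectral edge are supplied by the refined local semicircle laws referenced after Theorem \ref{lsc} (e.g.\ \cite{EYY2}, \cite{ekyy}). To make the bound uniform in $E$, I would union-bound over an $n^{-10}$-net of $[-3,3]$ (polynomially many points, so the overwhelming-probability conclusion survives) and then pass to arbitrary $E$ via the crude Lipschitz estimate $\|\partial_E (W_n - (E+i\eta)I)^{-1}\| = \|(W_n-(E+i\eta)I)^{-2}\| \le \eta^{-2}$, which perturbs any $G_{jj}$ by only $O(n^{-8})$ as $E$ moves within the net spacing.

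A more self-contained route, which yields the corollary in the bulk using only Theorem \ref{lsc} as a black box, argues directly from the eigenvector equation. Taking $j = n$ without loss of generality, block-decompose $W_n$ as in \eqref{block-matrix}, set $\lambda = \lambda_i(W_n)$, and write $u_i(W_n) = (v, a)$ with $a = e_n^* u_i(W_n)$; the top block of $W_n u_i(W_n) = \lambda u_i(W_n)$ gives $v = -\tfrac{a}{\sqrt n}(\tilde W_{n-1}-\lambda)^{-1} X_n$ (here $\lambda$ is not an eigenvalue of the minor, which we may assume), and combining with $\|v\|^2 + |a|^2 = 1$ yields
\begin{equation*}
|a|^{-2} = 1 + \frac1n \sum_{k=1}^{n-1} \frac{|u_k(\tilde W_{n-1})^* X_n|^2}{(\lambda_k(\tilde W_{n-1})-\lambda)^2}.
\end{equation*}
I would lower-bound the sum by discarding all but the indices $k$ in the set $S$ with $|\lambda_k(\tilde W_{n-1})-\lambda| \le \eta = n^{-1+\eps}$: Theorem \ref{lsc} applied to the minor $\tilde W_{n-1}$ (made uniform in $\lambda$ by the same net argument, and using that $\lambda$ lies in the bulk) gives $|S| \gg n^{\eps}$ with overwhelming probability. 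Since $X_n$ is independent of $\tilde W_{n-1}$ and has i.i.d.\ entries of mean zero and variance one, $\|P_S X_n\|^2$, with $P_S$ the spectral projection of $\tilde W_{n-1}$ onto $S$, concentrates around its mean $|S|$; as $|S| \gg \log n$, a standard concentration-of-measure estimate (e.g.\ Talagrand, as in the justification of \eqref{snn-2}) gives $\|P_S X_n\|^2 \ge \tfrac12 |S|$ with overwhelming probability, whence $|a|^{-2} \ge \tfrac12 n^{-1}\eta^{-2}|S| \gg n^{1-\eps}$.

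The main difficulty is the combination of uniformity over the random energies $E = \lambda_i(W_n)$ with the behaviour at the spectral edges $\pm 2$. The net-plus-Lipschitz device disposes of the first issue cheaply, but one must check that $|G_{jj}(E+i\eta)| = O(1)$ — equivalently, that the self-consistent analysis still closes — all the way up to, and slightly beyond, $\pm 2$. This is precisely where the crude bulk argument degrades: near the extreme edge the eigenvalue gaps of the minor are of order $n^{-2/3}$ rather than $n^{-1}$, so $|S|$ fails to be $\gg n^{\eps}$ at scale $n^{-1+\eps}$ and the Schur estimate only delivers $|u_i(W_n)^* e_j| = O(n^{-1/3+\eps})$ there. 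Recovering the full $O(n^{-1/2+\eps})$ uniformly in $i$ is exactly the point at which the sharper near-edge forms of the local semicircle law cited after Theorem \ref{lsc} are required.
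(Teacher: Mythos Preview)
Your proposal is correct and in fact contains both arguments the paper gives: your ``more self-contained route'' via the Schur-complement identity for $|u_i(W_n)^* e_n|^2$ is exactly the paper's main proof in the bulk (same block decomposition, same local-semicircle count of nearby minor eigenvalues, same Talagrand concentration for $\|\pi_V X_n\|$), while your resolvent approach through $\Im G_{jj}$ is precisely the ``slicker approach'' the paper records in the Remark immediately following the proof.

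The one substantive difference is the edge. You correctly diagnose that the bulk Schur argument degrades there and resolve it by invoking the refined entrywise local laws of \cite{EYY2}, \cite{ekyy} to get $|G_{jj}(E+i\eta)|=O(1)$ uniformly up to the edge. The paper instead stays within the Schur framework: it uses the additional eigenvalue-equation identity
\[
\lambda + \tfrac{1}{n} X_n^* (\tilde W_{n-1}-\lambda I)^{-1} X_n - \tfrac{1}{\sqrt n}\xi_{nn} = 0
\]
(the $nn$-th Schur complement evaluated on the spectrum), observes that the right-hand side forces the left-hand sum to be $\approx \pm 2$ at the edge, and combines this with the local semicircle law to extract the lower bound
\[
\sum_{j} (\lambda_j(\tilde W_{n-1})-\lambda)^{-2}\,|u_j(\tilde W_{n-1})^* X_n|^2 \gg n^{-1-\eps}
\]
directly, without needing the sharp entrywise resolvent bounds as a black box. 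Your route is cleaner to state but imports a heavier external input; the paper's is more self-contained at the cost of a separate edge computation.
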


Note from Pythagoras' theorem that $\sum_{j=1}^n |u_i(M_n)^* e_j|^2 = \|u_i(M_n)\|^2 = 1$; thus Corollary \ref{deloc} asserts, roughly speaking, that the coefficients of each eigenvector are as spread out (or \emph{delocalised}) as possible.

\begin{proof} (Sketch)  By symmetry we may take $e_j=n$.  Fix $i$, and set $\lambda := \lambda_i(W_n)$; then the eigenvector equation $(W_n-\lambda) u_i(W_n) = 0$ can be expressed using \eqref{block-matrix} as
$$
\begin{pmatrix} \tilde W_{n-1} - \lambda I& \frac{1}{\sqrt{n}} X_n \\ \frac{1}{\sqrt{n}} X_n^* & \frac{1}{\sqrt{n}} \xi_{nn} - \lambda\end{pmatrix}
\begin{pmatrix} \tilde u_i \\ u_i(W_n)^* e_n \end{pmatrix} = 0$$
where $\tilde u_i$ are the first $n-1$ coefficients of $u_i(W_n)$.  After some elementary algebraic manipulation (using the normalisation $\|u_i(W_n)\| = 1$), this leads to the identity
$$ |u_i(W_n)^* e_n|^2 = \frac{1}{1 + \| (\tilde W_{n-1}-\lambda)^{-1} X_n \|^2 / n}$$
and hence by eigenvalue decomposition
$$ |u_i(W_n)^* e_n|^2 = \frac{1}{1 + \sum_{j=1}^{n-1} (\lambda_j(\tilde W_{n-1})-\lambda)^{-2} |u_j(\tilde W_{n-1})^* X_n|^2 / n}.$$
Suppose first that we are in the bulk case when $\delta n \leq i \leq (1-\delta) n$ for some fixed $\delta > 0$.  From the local semicircle law, we then see with overwhelming probability that there are $\gg n^{\eps/2}$ eigenvalues $\lambda_j(\tilde W_{n-1})$ that lie within $n^{-1-\eps/2}$ of $\lambda$.  Letting $V$ be the span of the corresponding eigenvectors, we conclude that
$$ |u_i(W_n)^* e_n|^2 \ll n^{-1+\eps} / \|\pi_V(X_n)\|^2$$
where $\pi_V$ is the orthogonal projection to $V$.  If we freeze (i.e. condition) on $\tilde W_{n-1}$ and hence on $V$, then the coefficients of $X_n$ remain jointly independent with mean zero and variance one.  A short computation then shows that
$$ \E \|\pi_V(X_n)\|^2 = \dim(V) \gg n^{\eps/2}$$
and by using concentration of measure tools such as Talagrand's concentration inequality (see e.g. \cite{ledoux}), one can then conclude that $\| \pi_V(X_n)\| \gg 1$ with overwhelming probability.  This concludes the claim of the Corollary in the bulk case.

The edge case is more delicate, due to the sparser spectrum near $\lambda$.  Here, one takes advantage of the identity
$$ \lambda + \frac{1}{n} X_n^* (\tilde W_{n-1}- \lambda I)^{-1} X_n - \frac{1}{\sqrt{n}} \xi_{nn} = 0,$$
(cf. \eqref{snn}), which we can rearrange as
\begin{equation}\label{arrange}
 \frac{1}{n} \sum_{j=1}^{n-1} (\lambda_j(\tilde W_{n-1})-\lambda)^{-1} |u_j(\tilde W_{n-1})^* X_n|^2 = \frac{1}{\sqrt{n}} \xi_{nn} - \lambda.
 \end{equation}
In the edge case, the right-hand side is close to $\pm 2$, and this can be used (together with the local semicircle law) to obtain the lower bound
$$ \sum_{j=1}^{n-1} (\lambda_j(\tilde W_{n-1})-\lambda)^{-2} |u_j(\tilde W_{n-1})^* X_n|^2 \gg n^{-1-\eps}$$
with overwhelming probability, which gives the claim.  See \cite{TVlocal2} for details.
\end{proof}

\begin{remark}  A slicker approach to eigenvalue delocalisation proceeds via control of the resolvent (or Green's function) $(W_n - zI)^{-1}$, taking advantage of the identity
$$ \Im ((W_n-zI)^{-1})_{jj} = \sum_{i=1}^n \frac{\eta}{(\lambda_i(W_n)-E)^2 + \eta^2} |u_i(W_n)^* e_j|^2$$
for $z = E+i\eta$; see for instance \cite{Erd} for details of this approach.  Note from \eqref{snn} that the Stieltjes transform arguments used to establish the local semicircle law already yield control on quantities such as $((W_n-zI)^{-1})_{jj}$ as a byproduct.
\end{remark}

\section{Fine-scale spectral statistics: the case of GUE}\label{gue-sec}

We now turn to the question of the fine-scale behavior of eigenvalues of Wigner matrices, starting with the model case of GUE.  Here, it is convenient to work with the fine-scale normalisation $A_n := \sqrt{n} M_n$.  For simplicity we will restrict attention to the bulk region of the spectrum, which in the fine-scale normalisation corresponds to eigenvalues $\lambda_i(A_n)$ of $A_n$ that are near $nu$ for some fixed $-2<u<2$ independent of $n$.

There are several quantities at the fine scale that are of interest to study.  For instance, one can directly study the distribution of individual (fine-scale normalised) eigenvalues $\lambda_i(A_n)$ for a single index $1 \leq i \leq n$, or more generally study the joint distribution of a $k$-tuple $\lambda_{i_1}(A_n),\ldots,\lambda_{i_k}(A_n)$ of such eigenvalues for some $1 \leq i_1 < \ldots < i_k \leq n$.  Equivalently, one can obtain estimates for expressions of the form
\begin{equation}\label{ifk}
 \E F( \lambda_{i_1}(A_n), \ldots, \lambda_{i_k}(A_n) )
\end{equation}
for various test functions $F: \R^k \to \R$.  By specializing to the case $k=2$ and to translation-invariant functions $F(x,y) := f(x-y)$, one obtains distributional information on individual eigenvalue gaps $\lambda_{i+1}(A_n) - \lambda_i(A_n)$.

A closely related set of objects to the joint distribution of individual eigenvalues are the \emph{$k$-point correlation functions} $R_n^{(k)} = R_n^{(k)}(A_n): \R^k \to \R^+$, defined via duality to be the unique symmetric function (or measure) for which one has
\begin{equation}\label{rfk}
 \int_{\R^k} F(x_1,\ldots,x_k) R_n^{(k)}(x_1,\ldots,x_k)\ dx_1 \ldots dx_k = k! \sum_{1 \leq i_1 < \ldots < i_k} \E F(\lambda_{i_1}(A_n), \ldots, \lambda_{i_k}(A_n) )
\end{equation}
for all symmetric continuous compactly supported functions $F: \R^k \to \R$.  For discrete ensembles, $R_n^{(k)}$ is only defined as a measure on $\R^k$ (which, with our conventions, has total mass $\frac{n!}{(n-k)!}$); but for continuous ensembles, $R_n^{(k)}$ is a continuous function, and for $x_1 < \ldots < x_k$, one can equivalently define $R_n^{(k)}(x_1,\ldots,x_k)$ in this case by the formula
$$R_n^{(k)}(x_1,\ldots,x_k) = \lim_{\eps \to 0} \frac{1}{\eps^k} \P( E_{\eps,x_1,\ldots,x_k} )$$
where $E_{\eps,x_1,\ldots,x_k}$ is the event that there is an eigenvalue of $A_n$ in each of the intervals $[x_i,x_i+\eps]$ for $i=1,\ldots,k$.  Alternatively, one can write
$$ R_n^{(k)}(x_1,\ldots,x_k) = \frac{n!}{(n-k)!} \int_{\R^{n-k}} \rho_n(x_1,\ldots,x_n)\ dx_{k+1} \ldots dx_n$$
where $\rho_n := \frac{1}{n!} R_n^{(n)}$ is the symmetrized joint probability distribution of all $n$ eigenvalues of $A_n$.

Note from \eqref{rfk} that control on the expressions \eqref{ifk} implies (in principle, at least) control on the $k$-point correlation function by summing over the relevant indices $i_1,\ldots,i_k$; and from eigenvalue rigidity estimates such as \eqref{ifk} we see that (for fixed $k$, at least) there are only $n^{o(1)}$ choices for the $k$-tuple $(i_1,\ldots,i_k)$ that contribute to this sum.  

From the semi-circular law, we expect that at the energy level $nu$ for some $-2 < u < 2$, the eigenvalues of $A_n$ will be spaced with average spacing $1/\rho_\sc(u)$.  It is thus natural to consider the \emph{normalised $k$-point correlation function} $\rho^{(k)}_{n,u} = \rho^{(k)}_{n,u}(A_n): \R^k \to \R^+$, defined by the formula
\begin{equation}\label{roo}
 \rho^{(k)}_{n,u}(x_1,\ldots,x_k) := R_n^{(k)}\left( nu + \frac{x_1}{\rho_\sc(u)}, \ldots, nu + \frac{x_k}{\rho_\sc(u)} \right).
\end{equation}
Informally, for infinitesimal $\eps>0$, $\eps^k \rho^{(k)}_{n,u}(x_1,\ldots,x_k)$ is approximately equal to the probability that there is an eigenvalue in each of the intervals $[nu + \frac{x_i}{\rho_\sc(u)}, nu + \frac{x_i+\eps}{\rho_\sc(u)}]$ for

The Stieltjes transform $s_n(z) = \tr( (W_n-z)^{-1} )$ introduced previously is related to the fine-scale normalised eigenvalues by the formula
$$ s_n(z) = \sum_{i=1}^n \frac{1}{\lambda_i(A_n) - nz}.$$
More generally, one can consider the random variables
\begin{equation}\label{wnz}
 \tr( (W_n-z_1)^{-1} \ldots (W_n-z_k)^{-1} ) = \sum_{1 \leq i \leq n} \frac{1}{\lambda_{i}(A_n) - nz_1} \ldots \frac{1}{\lambda_{i}(A_n) - nz_k}.
\end{equation}
The joint distribution of such random variables can be expressed in terms of the $k$-point correlations; for instance, one has
$$ \E \tr( (W_n-z_1)^{-1} \ldots (W_n-z_k)^{-1} ) = \int_\R \frac{R_n^{(1)}(x)}{(x-nz_1) \ldots (x-nz_k)}\ dx.$$
Conversely, it is possible (with some combinatorial effort) to control the $k$-point correlation function in terms of the joint distribution of such random variables; see \cite[\S 8]{EYY}.

The distribution of the eigenvalue counting functions $N_I(A_n) = N_{I/n}(W_n)$ can be expressed in terms of the distribution of the individual eigenvalues or from the correlation function.  For instance, one has (for continuous ensembles, at least) the formula
$$ \E \binom{N_I(A_n)}{k} = \frac{1}{k!} \int_{I^k} R_n^{(k)}(x_1,\ldots,x_k)\ dx_1 \ldots dx_k$$
for any $k \geq 0$ (with the convention that $\binom{n}{k}=0$ whenever $n<k$).  

Finally, to close the circle of relationships, by coupling the eigenvalue counting function $N_I(W_n)$ for intervals such as $I = [-2,x]$ with previously mentioned quantities such as the Stieltjes transforms, as well as additional \emph{level repulsion} estimates that prevent two consecutive eigenvalues from getting too close to each other too often, one can recover control of individual eigenvalues; see \cite{knowles}.  

It has been generally believed (and in many cases explicitly conjectured; see e.g. \cite[page 9]{Meh}) that the asymptotic statistics for the quantities mentioned above are \emph{universal}, in the sense that the limiting laws do not depend on the distribution of the atom variables (assuming of course that they have been normalised as stated in Definition \ref{def:Wignermatrix}).  This phenomenon was motivated by examples of similarly universal laws in physics, such as the laws of
 thermodynamics or of critical percolation; see e.g. \cite{Meh, Deibook, Deisur} for further discussion.

It is clear that if one is able to prove the universality of a limiting law, then it suffices to 
compute this law for one specific model in order to describe the asymptotic behaviour for all other models. A natural choice for the specific model is GUE, as for this model, many 
limiting laws can be computed directly thanks to the availability of an explicit formula for the joint distribution of the eigenvalues, as well as the useful identities of determinantal processes.  For instance, one has \emph{Ginibre's formula}
\begin{equation}\label{rhun}
\rho_n(x_1,\ldots,x_n) = \frac{1}{(2\pi n)^{n/2}} e^{-|x|^2/2n} \prod_{1 \leq i<j \leq n} |x_i-x_j|^2,
\end{equation}
for the joint eigenvalue distribution, as can be verified from \eqref{cmm} and a change of variables; see \cite{gin}.  From \eqref{rhun} and a downwards induction on $k$ one can then obtain the \emph{Gaudin-Mehta formula}
\begin{equation}\label{gaudin}
R^{(k)}_n(x_1,\ldots,x_k) = \det( K_n(x_i,x_j) )_{1 \leq i,j \leq k}
\end{equation}
for all $0 \leq k \leq n$, where $K_n$ is the kernel
$$ K_n(x,y) = \frac{1}{\sqrt{n}} \sum_{k=0}^{n-1} P_k(\frac{x}{\sqrt{n}}) e^{-x^2/2n} P_k(\frac{y}{\sqrt{n}}) e^{-y^2/2n}$$
and $P_0,P_1,\ldots$ are the Hermite polynomials (thus each $P_n$ is a degree $n$ polynomial, with the $P_n$ being orthonormal with respect to the measure $e^{-x^2/2}\ dx$).  This formula, combined with the classical Plancherel-Rotach asymptotics for Hermite polynomials, gives the limiting law
\begin{equation}\label{k-asym}
 \lim_{n \to \infty} \rho^{(k)}_{n,u}(x_1,\ldots,x_k) = \rho^{(k)}_{\Dyson}(x_1,\ldots,x_k)
\end{equation}
locally uniformly in $x_1,\ldots,x_k$ where
$$ \rho^{(k)}_{\Dyson}(x_1,\ldots,x_k) := \det( K_{\Dyson}(x_i,x_j) )_{1 \leq i,j \leq k}$$
and $K_{\Dyson}$ is the \emph{Dyson sine kernel}
$$ K_{\Dyson}(x,y) := \frac{\sin(\pi(x-y))}{\pi(x-y)}$$
(with the usual convention that $\frac{\sin x}{x}$ equals $1$ at the origin); see \cite{gin,Meh}.  Standard determinantal process identities then give an asymptotic
\begin{equation}\label{nnu}
 N_{[nu+a/\rho_\sc(u), nu+b/\rho_\sc(u)]}(A_n) \to \sum_{j=1}^\infty \xi_j
\end{equation}
for any fixed real numbers $a<b$ and $-2<u<2$, where the $\xi_j \in \{0,1\}$ are independent Bernoulli variables with $\E(\xi_j) = \lambda_j$, the $\lambda_j$ are the eigenvalues of the integral operator $T_{[a,b]}: L^2([a,b]) \to L^2([a,b])$ defined by
$$ T_{[a,b]} f(x) := \int_{[a,b]} K_{\Dyson}(x,y) f(y)\ dy,$$
and the convergence is in the sense of probability distributions; see e.g. \cite{HKPV}.  Thus, for instance, the probability that the interval $[nu+a/\rho_\sc(u), nu+b/\rho_\sc(u)]$ is devoid of eigenvalues converges as $n \to\infty$ to the Fredholm determinant\footnote{See \cite{JMM} for a more explicit description of this determinant in terms of a solution to an ODE.}
$$ \det(1-T_{[a,b]}) := \prod_{j=1}^\infty (1-\lambda_j).$$
Using this formula one can obtain a limiting law for an (averaged) eigenvalue spacing.	 More precisely, given an intermediate scale parameter $t_n$ such that $t_n, n/t_n \to \infty$ as $n \to \infty$, define the quantity
$$ S_n(s,u,t_n) := \frac{1}{t_n} |\{ 1 \leq i \leq n: |\lambda_i(A_n)-nu| \leq t_n/\rho_\sc(u); \lambda_{i+1}(A_n)-\lambda_i(A_n) \leq s/\rho_\sc(u) \}|.$$
Then one can establish for fixed $-2 < u < 2$ that
\begin{equation}\label{snsu}
 \E S_n(s,u,t_n) \to \int_0^s \rho(\sigma)\ d\sigma
\end{equation}
where $\rho$ is the \emph{Gaudin distribution}
$$ \rho(s) := \frac{d^2}{ds^2} \det(1-T_{[0,s]});$$
see \cite{DKMVZ}.  Informally, $\rho$ is the asymptotic distribution of the normalised gap $\rho_\sc(u) (\lambda_{i+1}(A_n)-\lambda_i(A_n))$ for typical $\lambda_i(A_n)$ near $nu$.

A variant of the computations that lead to \eqref{nnu} (and more precisely, a general central limit theorem for determinantal processes due to Costin-Leibowitz \cite{CLe} and Soshnikov \cite{Sos2}) can give a limiting law for $N_I(A_n)$ in the case of the macroscopic intervals $I=[nu,+\infty)$.  More precisely, one has the central limit theorem
$$ \frac{N_{[nu,+\infty)}(A_n) - n \int_u^\infty \rho_\sc(y)\ dy}{\sqrt{\frac{1}{2\pi^2} \log n}} \to N(0,1)_\R$$
in the sense of probability distributions, for any $-2 < u < 2$; see \cite{Gus}.  By using the counting functions $N_{[nu,+\infty)}$ to solve for the location of individual eigenvalues $\lambda_i(A_n)$, one can then conclude the central limit theorem
\begin{equation}\label{gustav}
 \frac{\lambda_i(A_n) - \lambda_i^\cl(A_n)}{\sqrt{\log n/2\pi} / \rho_\sc(u)} \to N(0,1)_\R
 \end{equation}
whenever $\lambda_i^\cl(A_n) := n \lambda_i^\cl(W_n)$ is equal to $n(u+o(1))$ for some fixed $-2 < u < 2$; see \cite{Gus}.  Informally, this asserts (in the GUE case, at least) that each eigenvalue $\lambda_i(A_n)$ typically deviates by $O( \sqrt{\log n} / \rho_\sc(u) )$ around its classical location; this result should be compared with \eqref{eigenrigid}, which has a slightly worse bound on the deviation (of shape $O_\eps(n^\eps)$ instead of $O(\sqrt{\log n})$) but which holds with overwhelming probability (and for general Wigner ensembles).

The above analysis extends to many other classes of invariant ensembles (such as GOE\footnote{There are some further direct relationships between the GOE and GUE eigenvalue distributions that can be used to deduce control of the former from that of the latter; see \cite{rains}.}, for which the joint eigenvalue distribution has a form similar to \eqref{rhun} (namely, an exponential factor and a power of a Vandermonde determinant).  However, the Hermite polynomials are usually replaced by some other family of orthogonal polynomials, and one needs additional tools (such as the theory of Riemann-Hilbert problems) to obtain enough asymptotic control on those polynomials to recover the other results of the type given here.  See \cite{Deibook} for further discussion.  However, we will not pursue this important aspect of the universality phenomenon for random matrices here, as our focus is instead on the Wigner matrix models.

\section{Extending beyond the GUE case I.  Heat flow methods}\label{heatflow}

The arguments used to establish the results in the previous section relied heavily on the special structure of the GUE ensemble, and in particular the fact that the joint probability distribution had a determinantal structure (cf. \eqref{gaudin}).  To go significantly beyond the GUE case, there are two families of techniques used.  One family are the \emph{heat flow} methods, based on applying an Ornstein-Uhlenbeck process to a Wigner ensemble $M_n^0$ to obtain a \emph{gauss divisible} ensemble $M_n^t$ that is closer to the GUE, and showing that the latter obeys approximately the same statistics as the GUE.  The other family of methods are the \emph{swapping} methods, in which one replaces the entries of one Wigner ensemble $M_n$ with another ensemble $M'_n$ which are close in some suitable sense (e.g. in the sense of matching moments), and shows that the statistics for both ensembles are close to each other.  The two methods are complementary, and many of the strongest results known about universality for Wigner matrices use a combination of both methods.

Our focus shall largely be on the swapping methods (and in particular on the \emph{four moment theorem}), but in this section we will briefly survey the heat flow techniques.  (For a more detailed survey of these methods, see \cite{Erd} or \cite{schlein}.)

Let $M_n^0$ be a Wigner matrix.  One can then define the \emph{matrix Ornstein-Uhlenbeck process} $M_n^t$ for times $t \in [0,+\infty)$ by the stochastic differential equation\footnote{One can omit the normalising term $- \frac{1}{2} M_n^t\ dt$ in this process to obtain a Brownian process rather than an Ornstein-Uhlenbeck process, but we retain the normalising term in order to keep the variance of each (off-diagonal) entry of the matrix $M_n^t$ fixed.}
$$ dM_n^t = d\beta_t - \frac{1}{2} M_n^t\ dt$$
with initial data $M_n^t|_{t=0} = M_n^0$, where $\beta_t$ is a Hermitian matrix process whose entries are standard real Brownian motions on the diagonal and standard complex Brownian motions off the diagonal, with the $\beta_t$ being independent of $M_n^0$, and with the upper-triangular entries of $\beta_t$ being jointly independent.  A standard stochastic calculus computation shows that $M_n^t$ is distributed according to the law
$$ M_n^t \equiv e^{-t/2} M_n^0 + (1-e^{-t})^{1/2} G_n,$$
where $G_n$ is a GUE matrix independent of $M_n^0$.  In particular, the random matrix $M_n^t$ is distributed as $M_n^0$ for $t=0$ and then continuously deforms towards the GUE distribution as $t \to +\infty$.  We say that a Wigner matrix is \emph{gauss divisible} (also known as a \emph{Johansson matrix}) with parameter $t$ if it has the distribution of $e^{-t/2} M_n^0 + (1-e^{-t})^{1/2} G_n$ for some Wigner matrix $M_n^0$; thus $M_n^t$ is gauss divisible with parameter $t$.  Note that not every Wigner matrix is gauss divisible; among other things, gauss divisible matrices necessarily have a continuous (and in fact smooth) distribution rather than a discrete one.  The larger one makes $t$, the more restrictive the requirement of gauss divisibility becomes, until in the asymptotic limit $t=+\infty$ the only gauss divisible ensemble remaining is GUE.

The dynamics of the (fine-scale normalised) eigenvalues $\lambda_i(A_n^t) = \sqrt{n} \lambda_i(M_n^t)$ were famously established by Dyson \cite{Dys} to be governed by the (normalised) \emph{Dyson Brownian motion} equations
\begin{equation}\label{dbm}
d\lambda_i(A_n^t) = \sqrt{n} dB_i + n \sum_{1 \leq j \leq n: j \neq i} \frac{dt}{\lambda_i(A_n^t) - \lambda_j(A_n^t)} - \frac{1}{2} \lambda_i(A_n^t)\ dt,
\end{equation}
where $B_1,\ldots,B_n$ are independent standard real Brownian motions.  

One can phrase the Dyson Brownian motion in a dual form in terms of the joint eigenvalue distribution function $\rho_n^t: \R^n \to \R^+$ at time $t$.  Namely, $\rho_n^t$ obeys the \emph{Dyson Fokker-Planck equation}
\begin{equation}\label{dfp}
\frac{\partial}{\partial t} \rho_n^t = D \rho_n^t
\end{equation}
where $D$ is the differential operator
$$ D \rho := \frac{n}{2} \sum_{i=1}^n \partial_j^2 \rho - n \sum_{1 \leq i,j \leq n: i \neq j} \partial_j\left( \frac{\rho}{x_i - x_j} \right) + \frac{1}{2} \sum_{j=1}^n \partial_j(x_j \rho)$$
and we write $\partial_j$ as shorthand for the partial derivative $\frac{\partial}{\partial x_j}$.  Observe that the Ginibre distribution $\rho_n^\infty$ defined by \eqref{rhun} is annihilated by $D$ and is thus an equilibrium state of the Dyson Fokker-Planck equation; this is of course consistent with the convergence of the distribution of $M_n^t$ to the distribution of GUE.  

The Dyson Fokker-Planck equation \eqref{dfp} can in fact be solved explicitly by observing the identity
$$ D( \Delta_n u) = \Delta_n( L u + \frac{n(n-1)}{4} u)$$
where $\Delta_n(x) := \prod_{1 \leq i<j \leq n} (x_i-x_j)$ is the Vandermonde determinant, and $L$ is the Ornstein-Uhlenbeck operator
$$ L u := \frac{n}{2} \sum_{i=1}^n \partial_j^2 u + \frac{1}{2} \sum_{i=1}^n \partial_j(x_j u).$$
This allows one to reduce the Dyson Fokker-Planck equation by a change of variables to the Ornstein-Uhlenbeck Fokker-Planck equation $\frac{\partial}{\partial t} u = Lu$, which has an explicit fundamental solution.  Using this\footnote{The derivation of these formulae in \cite{Joh1} is somewhat different, proceeding via the Harish-Chandra/Itzykson-Zuber formula \cite{harish}; however, as noted in that paper, one can also use Dyson Brownian motion to derive the formula, which is the approach taken here.}  one can obtain an explicit formula for $\rho_n^t$ in terms of $\rho_n^0$, and with a bit more effort one can also obtain a (slightly messy) determinantal formula for the associated correlation functions $(R_n^{(k)})^t$; see \cite{brezin}, \cite{Joh1}.  By exploiting these explicit formulae, Johansson \cite{Joh1} was able\footnote{Some additional technical hypotheses were assumed in \cite{Joh1}, namely that the diagonal variance $\sigma^2$ was equal to $1$, that the real and imaginary parts of each entry of $M'_n$ were independent, and that Condition \condone held for some $C_0>6$.} to extend the asymptotic \eqref{k-asym} for the $k$-point correlation function from GUE to the more general class of gauss divisible matrices with fixed parameter $t>0$ (independent of $n$).

It is of interest to extend this analysis to as small a value of $t$ as possible, since if one could set $t=0$ then one would obtain universality for all Wigner ensembles.  By optimising Johansson's method (and taking advantage of the local semi-circle law), Erd\H os, Peche, Ramirez, Schlein, and Yau \cite{EPRSY} were able to extend the universality of \eqref{k-asym} (interpreted in a suitably weak convergence topology, such as vague convergence) to gauss divisible ensembles for $t$ as small as $n^{-1+\eps}$ for any fixed $\eps>0$.

An alternate approach to these results was developed by Erd\H os, Ramirez, Schlein, Yau, and Yin \cite{ERSY}, \cite{ESY}, \cite{ESYY}.  The details are too technical to be given here, but the main idea is to use standard tools such as log-Sobolev inequalities to control the rate of convergence of the Dyson Fokker-Planck equation to the equilibrium measure $\rho_n^\infty$, starting from the initial data $\rho_n^0$; informally, if one has a good convergence to this equilibrium measure by time $t$, then one can obtain universality results for gauss divisible ensembles with this parameter $t$.  A simple model to gain heuristic intuition on the time needed to converge to equilibrium is given by the one-dimensional Ornstein-Uhlenbeck process
\begin{equation}\label{ou}
 dx = \sigma d\beta_t - \theta(x - \mu)\ dt
\end{equation}
for some parameters $\sigma, \theta > 0$, $\mu \in \R$ and some standard Brownian motion $\beta_t$.  Standard computations (or dimensional analysis) suggest that this process should converge to the equilibrium measure (in this case, a normal distribution $N(\mu,\sigma^2/2\theta)$) in time\footnote{If the initial position $x(0)$ is significantly farther away from the mean $\mu$ than the standard deviation $\sqrt{\sigma^2/2\theta}$, say $|x(0)-\mu| \sim K \sqrt{\sigma^2/2\theta}$, then one acquires an additional factor of $\log K$ in the convergence to equilibrium, because it takes time about $\log K/\theta$ for the drift term $-\theta(x-\mu)\ dt$ in \eqref{ou} to move $x$ back to within $O(1)$ standard deviations of $\mu$.  These sorts of logarithmic factors will be of only secondary importance in this analysis, ultimately being absorbed in various $O(n^\eps)$ error factors.}  $O(1/\theta)$, in the sense that the probability distribution of $x$ should differ from the equilibrium distribution by an amount that decays exponentially in $t/\theta$.

As was already observed implicity by Dyson, the difficulty with the Dyson Fokker-Planck equation \eqref{dfp} (or equivalently, the Dyson Brownian motion \eqref{dbm}) is that different components of the evolution converge to equilibrium at different rates.  Consider for instance the trace variable $T := \lambda_1(A_n^t) + \ldots + \lambda_n(A_n^t)$.  Summing up \eqref{dbm} we see that this variable evolves by the Ornstein-Uhlenbeck process
$$ dT = n d\beta_t - \frac{1}{2} T\ dt$$
for some standard Brownian motion $\beta_t$, and so we expect convergence to equilibrium for this variable in time $O(1)$.  At the other extreme, consider an eigenvalue gap $s_i := \lambda_{i+1}(A_n^t) - \lambda_i(A_n^t)$ somewhere in the bulk of the spectrum.  Subtracting two consecutive cases of \eqref{dbm}, we see that
\begin{equation}\label{sissy}
 ds_i = 2\sqrt{n} d\beta_{t,i} - \theta_i s_i\ dt + \frac{2n}{s_i}\ dt
\end{equation}
where
$$ \theta_i := n \sum_{1 \leq j \leq n: j \neq i,i+1} \frac{1}{(\lambda_{i+1}(A_n^t) - \lambda_j(A_n^t))(\lambda_i(A_n^t) - \lambda_j(A_n^t))} + \frac{1}{2}.$$
Using the heuristic $\lambda_j(A_n) \approx \lambda_j^\cl(A_n)$, we expect $\theta_i$ to be of size comparable to $n$ and $s_i$ to be of size comparable to $1$; comparing \eqref{sissy} with \eqref{ou} we thus expect $s_i$ to converge to equilibrium in time $O(1/n)$.

One can use a standard log-Sobolev argument of Bakry and Emery \cite{bakry} (exploiting the fact that the equilibrium measure $\rho_n^\infty$ is the negative exponential of a strictly convex function $H$) to show (roughly speaking) that the Dyson Brownian motion converges to global equilibrium in time $O(1)$; see e.g. \cite{Erd}.  Thus the trace variable $T$ is among the slowest of the components of the motion to converge to equilibrium.  However, for the purposes of controlling local statistics such as the normalised $k$-point correlation function
$$ \rho^{(k)}_{n,u}(x_1,\ldots,x_k) := R_n^{(k)}( nu + \frac{x_1}{\rho_\sc(u)}, \ldots, nu + \frac{x_k}{\rho_\sc(u)} ),$$
and particularly the \emph{averaged} normalised $k$-point correlation function
$$ \frac{1}{2b} \int_{u_0-b}^{u_0+b} R_n^{(k)}( nu + \frac{x_1}{\rho_\sc(u_0)}, \ldots, nu + \frac{x_k}{\rho_\sc(u_0)} )\ du,$$
these ``slow'' variables turn out to essentially be irrelevant, and it is the ``fast'' variables such as $s_i$ which largely control these expressions.  As such, one expects these particular quantities to converge to their equilibrium limit at a much faster rate.  By replacing the global equilibrium measure $\rho_n^\infty$ with a localized variant which has better convexity properties in the slow variables, it was shown in \cite{ESY}, \cite{ESYY} by a suitable modification of the Bakry-Emery argument that one in fact had convergence to equilibrium for such expressions in time $O(n^{-1+\eps})$ for any fixed $\eps$; a weak version\footnote{Roughly speaking, the rigidity result that is needed is that one has $\lambda_i(W_n) = \lambda_i^\cl(W_n) + O(n^{1/2-c})$ in an $\ell^2$-averaged sense for some absolute constant $c>0$.  See \cite{Erd} for details.} of the rigidity of eigenvalues statement \eqref{eigenrigid} is needed in order to show that the error incurred by replacing the actual equilibrium measure with a localized variant is acceptable.  Among other things, this argument reproves a weaker version of the result in \cite{EPRSY} mentioned earlier, in which one obtained universality for the asymptotic \eqref{k-asym} after an additional averaging in the energy parameter $u$.  However, the method was simpler and more flexible than that in \cite{EPRSY}, as it did not rely on explicit identities, and has since been extended to many other types of ensembles, including the real symmetric analogue of gauss divisible ensembles in which the role of GUE is replaced instead by GOE.  Again, we refer the reader to \cite{Erd} for more details.

\section{Extending beyond the GUE case II.  Swapping and the Four Moment Theorem}\label{swap-sec}

The heat flow methods discussed in the previous section enlarge the class of Wigner matrices to which GUE-type statistics are known to hold, but do not cover all such matrices, and in particular leave out discrete ensembles such as the Bernoulli ensembles, which are not gauss divisible for any $t>0$.  To complement these methods, we have a family of \emph{swapping} methods to extend spectral asymptotics from one ensemble to another, based on individual replacement of each coefficient of a Wigner matrix, as opposed to deforming all of the coefficients simultaneously via heat flow.

The simplest (but rather crude) example of a swapping method is based on the \emph{total variation distance} $d(X,Y)$ between two random variables $X, Y$ taking values in the same range $R$, defined by the formula
$$ d(X,Y) := \sup_{E \subset R} |\P( X \in E ) - \P( Y \in E )|,$$
where the supremum is over all measurable subsets $E$ of $R$ 	Clearly one has
$$ |\E F(X) - \E F(Y)| \leq \|F\|_{L^\infty} d(X,Y)$$
for any measurable function $F: R \to \R$.  As such, if $d(M_n,M'_n)$ is small, one can approximate various spectral statistics of $M_n$ by those of $M'_n$, or vice versa.  For instance, one has
$$ |\E F( \lambda_{i_1}(A_n),\ldots,\lambda_{i_k}(A_n) ) - F( \lambda_{i_1}(A'_n),\ldots,\lambda_{i_k}(A'_n) )|
\leq \|F\|_{L^\infty} d(M_n,M'_n),$$
and thus from \eqref{rfk} we have the somewhat crude bound
\begin{align*}
|\int_{\R^k} F(x_1,\ldots,x_k) R_n^{(k)}(A_n)(x_1,\ldots,x_k)&- 
 F(x_1,\ldots,x_k) R_n^{(k)}(A'_n)(x_1,\ldots,x_k)\ dx_1 \ldots dx_k|\\
& \leq n^k \|F\|_{L^\infty} d(M_n,M'_n).
\end{align*}
and hence by \eqref{roo}
\begin{align*}
|\int_{\R^k} F(x_1,\ldots,x_k) \rho_{n,u}^{(k)}(A_n)(x_1,\ldots,x_k)&- 
F(x_1,\ldots,x_k) \rho_{n,u}^{(k)}(A'_n)(x_1,\ldots,x_k)\ dx_1 \ldots dx_k| \\
& \leq (\rho_\sc(u) n)^k \|F\|_{L^\infty} d(M_n,M'_n)
\end{align*}
for any test function $F$.  On the other hand, by swapping the entries of $M_n$ with $M'_n$ one at a time, we see that
$$ d(M_n,M'_n) \leq \sum_{i=1}^n \sum_{j=1}^n d(\xi_{ij}, \xi'_{ij}).$$
We thus see that if $d(\xi_{ij},\xi'_{ij}) \leq n^{-C}$ for a sufficiently large constant $C$ (depending on $k$), then the $k$-point correlation functions of $M_n$ and $M'_n$ are asymptotically equivalent.  This argument was quite crude, costing many more powers of $n$ than is strictly necessary, and by arguing more carefully one can reduce this power; see \cite{EPRSY}.  However, it does not seem possible to eliminate the factors of $n$ entirely from this type of argument.

By combining this sort of total variation-based swapping argument with the heat flow universality results for time $t = n^{-1+\eps}$, the asymptotic \eqref{k-asym} was demonstrated to hold in \cite{EPRSY} for Wigner matrices with sufficiently smooth distribution; in particular, the $k=2$ case of \eqref{k-asym} was established if the distribution function of the atom variables were $C^6$ (i.e. six times continuously differentiable) and obeyed a number of technical decay and positivity conditions that we will not detail here.  The basic idea is to approximate the distribution $\rho$ of an atom variable $\xi_{ij}$ in total variation distance (or equivalently, in $L^1$ norm) by the distribution $e^{tL} \tilde \rho$ of a gauss-divisible atom variable $\xi'_{ij}$ with an accuracy that is better than $n^{-C}$ for a suitable $C$, where $L$ is the generator of the Ornstein-Uhlenbeck process and $t = n^{-1+\eps}$; this can be accomplished by setting $\tilde\rho$ to essentially be a partial Taylor expansion of the (formal) backwards Ornstein-Uhlenbeck evolution $e^{-tL} \rho$ of $\rho$ to some bounded order, with the smoothness of $\rho$ needed to ensure that this partial Taylor expansion $\tilde \rho$ remains well-defined as a probability distribution, and that $e^{tL} \tilde \rho$ approximates $\rho$ sufficiently well.  See \cite{EPRSY} for more details of this method (referred to as the \emph{method of reverse heat flow} in that paper).

Another fundamental example of a swapping method is the \emph{Lindeberg exchange strategy} \footnote{We would like to thank S. Chatterjee and M. Krisnapur
for introducing this method to us}, introduced in Lindeberg's classic proof \cite{lindeberg} of the central limit theorem, and first applied to Wigner ensembles in \cite{Chat}.  We quickly sketch that proof here.  Suppose that $X_1,\ldots,X_n$ are iid real random variables with mean zero and variance one, and let $Y_1,\ldots,Y_n$ be another set of iid real random variables and mean zero and variance one (which we may assume to be independent of $X_1,\ldots,X_n$).  We would like to show that the averages $\frac{X_1+\ldots+X_n}{\sqrt{n}}$ and $\frac{Y_1+\ldots+Y_n}{\sqrt{n}}$ have asymptotically the same distribution, thus
$$ \E F\left( \frac{X_1+\ldots+X_n}{\sqrt{n}} \right) = \E F\left( \frac{Y_1+\ldots+Y_n}{\sqrt{n}} \right) + o(1)$$
for any smooth, compactly supported function $F$.  The idea is to swap the entries $X_1,\ldots,X_n$ with $Y_1,\ldots,Y_n$ one at a time and obtain an error of $o(1/n)$ on each such swap.  For sake of illustration we shall just establish this for the first swap:
\begin{equation}\label{swap}
 \E F\left( \frac{X_1+\ldots+X_n}{\sqrt{n}} \right) = \E F\left( \frac{X_1+\ldots+X_{n-1}+Y_n}{\sqrt{n}} \right) + o(1/n).
 \end{equation}
We write $\frac{X_1+\ldots+X_n}{\sqrt{n}} = S + n^{-1/2} X_n$, where $S := \frac{X_1+\ldots+X_{n-1}}{\sqrt{n}}$.  From Taylor expansion we see (for fixed smooth, compactly supported $F$) that
$$ F\left( \frac{X_1+\ldots+X_n}{\sqrt{n}} \right) = F(S) + n^{-1/2} X_n F'(S) + \frac{1}{2} n^{-1} X_n^2 F''(S) + O( n^{-3/2} |X_n|^3 ).$$
We then make the crucial observation that $S$ and $X_n$ are independent.  On taking expectations (and assuming that $X_n$ has a bounded third moment) we conclude that
$$ \E F\left( \frac{X_1+\ldots+X_n}{\sqrt{n}} \right) = \E F(S) + n^{-1/2} (\E X_n) \E F'(S) + \frac{1}{2} n^{-1}(\E X_n^2) \E F''(S) + O( n^{-3/2} ).$$
Similarly one has
$$ \E F\left( \frac{X_1+\ldots+X_{n-1}+Y_n}{\sqrt{n}} \right) = \E F(S) + n^{-1/2} (\E Y_n) \E F'(S) + \frac{1}{2} n^{-1}(\E Y_n^2) \E F''(S) + O( n^{-3/2} ).$$
But by hypothesis, $X_n$ and $Y_n$ have \emph{matching moments to second order}, in the sense that $\E X_n^i = \E Y_n^i$ for $i = 0,1,2$.  Thus, on subtracting, we obtain \eqref{swap} (with about a factor of $n^{-1/2}$ to spare; cf. the Berry-Ess\'een theorem \cite{berry}, \cite{esseen}).

Note how the argument relied on the matching moments of the two atom variables $X_i, Y_i$; if one had more matching moments, one could continue the Taylor expansion and obtain further improvements to the error term in \eqref{swap}, with an additional gain of $n^{-1/2}$ for each further matching moment.

We can apply the same strategy to control expressions such as $\E F(M_n) - F(M'_n)$, where $M_n, M'_n$ are two (independent) Wigner matrices.  If one can obtain bounds such as
$$\E F(M_n) - \E F(\tilde M_n) = o(1/n)$$
when $\tilde M_n$ is formed from $M_n$ by replacing\footnote{Technically, the matrices $\tilde M_n$ formed by such a swapping procedure are not Wigner matrices as defined in Definition \ref{def:Wignermatrix}, because the diagonal or upper-triangular entries are no longer identically distributed.  However, all of the relevant estimates for Wigner matrices can be extended to the non-identically-distributed case at the cost of making the notation slightly more complicated.  As this is a relatively minor issue, we will not discuss it further here.} one of the diagonal entries $\xi_{ii}$ of $M_n$ by the corresponding entry $\xi'_{ii}$ of $M'_n$, and bounds such as
$$\E F(M_n) - \E F(\tilde M_n) = o(1/n^2)$$
when $\tilde M_n$ is formed from $M_n$ by replacing one of the off-diagonal entries $\xi_{ij}$ of $M_n$ with the corresponding entry $\xi'_{ij}$ of $M'_n$ (and also replacing $\xi_{ji} = \overline{\xi_{ij}}$ with $\xi'_{ji} = \overline{\xi'_{ij}}$, to preserve the Hermitian property), then on summing an appropriate telescoping series, one would be able to conclude asymptotic agreement of the statistics $\E F(M_n)$ and $\E F(M'_n)$:
\begin{equation}\label{famn}
\E F(M_n) - \E F(M'_n) = o(1)
\end{equation}

As it turns out, the numerology of swapping for matrices is similar to that for the central limit theorem, in that each matching moment leads to an additional factor of $O(n^{-1/2})$ in the error estimates.  From this, one can expect to obtain asymptotics of the form \eqref{famn} when the entries of $M_n, M'_n$ match to second order on the diagonal and to fourth order off the diagonal; informally, this would mean that $\E F(M_n)$ depends only on the first four moments of the entries (and the first two moments of the diagonal entries).  In the case of statistics arising from eigenvalues or eigenvectors, this is indeed the case, and the precise statement is known as the \emph{Four Moment Theorem}.  

We first state the Four Moment Theorem for eigenvalues.  

\begin{definition}[Matching moments]  Let $k \geq 1$.  Two complex random variables $\xi, \xi'$ are said to \emph{match to order $k$} if one has $\E \Re(\xi)^a \Im(\xi)^b = \E \Re(\xi')^a \Im(\xi')^b$ whenever $a, b \geq 0$ are integers such that $a+b \leq k$.
\end{definition}

In the model case when the real and imaginary parts of $\xi$ or of $\xi'$ are independent, the matching moment condition simplifies to the assertion that $\E \Re(\xi)^a = \E \Re(\xi')^a$ and $\E \Im(\xi)^b = \E \Im(\xi')^b$ for all $0 \leq a, b \leq k$.
 
\begin{theorem}[Four Moment Theorem for eigenvalues]\label{theorem:Four} 
Let $c_0 > 0$ be a sufficiently small constant.
 Let $M_n = (\xi_{ij})_{1 \leq i,j \leq n}$ and $M'_n = (\xi'_{ij})_{1 \leq i,j \leq n}$ be
 two Wigner matrices obeying Condition \condone for some sufficiently large absolute constant $C_0$. Assume furthermore that for any $1 \le  i<j \le n$, $\xi_{ij}$ and
 $\xi'_{ij}$  match to order $4$ 
  and for any $1 \le i \le n$, $\xi_{ii}$ and $\xi'_{ii}$ match  to order $2$.  Set $A_n := \sqrt{n} M_n$ and $A'_n := \sqrt{n} M'_n$, let $1 \leq k \leq n^{c_0}$ be an integer,  and let $G: \R^k \to \R$ be a smooth function obeying the derivative bounds
\begin{equation}\label{G-deriv}
|\nabla^j G(x)| \leq n^{c_0}
\end{equation}
for all $0 \leq j \leq 5$ and $x \in \R^k$.
 Then for any $1 \le i_1 < i_2 \dots < i_k \le n$, and for $n$ sufficiently large we have
\begin{equation} \label{eqn:approximation}
 |\E ( G(\lambda_{i_1}(A_n), \dots, \lambda_{i_k}(A_n))) -
 \E ( G(\lambda_{i_1}(A'_n), \dots, \lambda_{i_k}(A'_n)))| \le n^{-c_0}.
\end{equation}
\end{theorem}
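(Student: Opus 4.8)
The plan is to implement the Lindeberg exchange strategy sketched in Section~\ref{swap-sec}, upgraded from a scalar Taylor expansion to a resolvent (Green's function) expansion. I would reduce matters to comparing $\E G$ under the swap of a single entry at a time, so it suffices to show that if $\tilde M_n$ differs from $M_n$ only in the entry $\xi_{pq}$ (and its conjugate $\xi_{qp}$), replaced by a variable $\xi'_{pq}$ matching to order $4$ (or order $2$ if $p=q$), then the corresponding change in $\E G(\lambda_{i_1},\dots,\lambda_{i_k})$ is $O(n^{-c_0}/n^2)$ off the diagonal and $O(n^{-c_0}/n)$ on the diagonal. Summing the resulting telescoping series over the $O(n^2)$ entries then yields \eqref{eqn:approximation}. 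The essential technical device is to express the eigenvalues $\lambda_{i_\ell}(A_n)$, or rather smoothed proxies for them, in terms of the resolvent $(W_n - z I)^{-1}$: by combining Theorem~\ref{lsc} (local semicircle law) with level repulsion estimates, one can write $G(\lambda_{i_1}(A_n),\dots,\lambda_{i_k}(A_n))$ as a smooth functional of finitely many traces $s_n(z) = \frac1n \tr (W_n - zI)^{-1}$ evaluated at points $z$ with $\Im z$ as small as $n^{-1-c'}$, up to acceptable errors; eigenvalue rigidity \eqref{eigenrigid} restricts the relevant $z$ to a neighbourhood of $\lambda_{i_\ell}^{\cl}$.

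The core of the argument is a fourth-order resolvent expansion. Write $M_n = M_n^{(pq)} + V$ where $V$ contains only the $(p,q)$ and $(q,p)$ entries, so $V = \xi_{pq} e_p e_q^* + \overline{\xi_{pq}} e_q e_p^*$ has operator norm $O(|\xi_{pq}|)$, which is $n^{o(1)}$ with overwhelming probability under Condition~\condone\ (after a standard truncation of the atom variables at $n^{c}$, whose cost is negligible). Using the resolvent identity iteratively,
\begin{equation}\label{resexp}
 (W_n - zI)^{-1} = \sum_{m=0}^{4} \left( -\tfrac{1}{\sqrt n}(W_n^{(pq)} - zI)^{-1} V \right)^m (W_n^{(pq)} - zI)^{-1} + E_5,
\end{equation}
where $E_5 = O(n^{-5/2} |\xi_{pq}|^5 \|(W_n^{(pq)}-zI)^{-1}\|^6)$. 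The delocalisation bound of Corollary~\ref{deloc} and the local semicircle law control the entries of $(W_n^{(pq)}-zI)^{-1}$ that appear when $V$ is sandwiched, so that the $m$-th term in \eqref{resexp} is of size $n^{-m/2} \cdot n^{O(c')}$; the key point is the extra $n^{-1/2}$ gained per power of $V$. Taking expectations and using that $\xi_{pq}$ and $\xi'_{pq}$ match to order $4$, the terms $m=0,1,2,3,4$ contribute identically for the two ensembles, so the difference $\E[\,\cdot\,(M_n)] - \E[\,\cdot\,(\tilde M_n)]$ comes only from $E_5$ and is $O(n^{-5/2+O(c')})$, which beats the required $n^{-2-c_0}$ once $c_0$ and $c'$ are small enough. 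On the diagonal $p=q$ only two matching moments are available, so the expansion is truncated at $m=2$ and the error is $O(n^{-3/2+O(c')})$, which still beats $n^{-1-c_0}$.

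The main obstacle is making rigorous the passage from the smooth functional $G$ of \emph{eigenvalues} to a smooth functional of \emph{resolvent traces}, and controlling the derivatives that appear. One has to (i) replace the sharp counting of eigenvalues near $\lambda_{i_\ell}^{\cl}$ by a smoothed version, which requires level-repulsion / eigenvalue-separation bounds (two eigenvalues are rarely within $n^{-1-c'}$ of each other) so that the smoothing error is negligible; (ii) track how the derivative bound \eqref{G-deriv}, $|\nabla^j G| \le n^{c_0}$ for $j \le 5$, propagates through the chain rule into bounds on the derivatives of the composite functional with respect to the real and imaginary parts of $\xi_{pq}$ --- this is where the hypothesis that one controls \emph{five} derivatives of $G$ is used, matching the fifth-order remainder $E_5$; and (iii) handle the exceptional event (of probability $O(n^{-A})$) where the local semicircle law, delocalisation, or rigidity fail, by crudely bounding $G$ there using \eqref{G-deriv} and the polynomial bound $\|M_n\| = O(\sqrt n \cdot n^{o(1)})$. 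Once these are in place, the telescoping sum over the at most $n^2$ entries, each contributing $O(n^{-2-c_0})$ (or $O(n^{-1-c_0})$ for the $n$ diagonal entries), closes the estimate. I would refer to \cite{TVlocal1}, \cite{TVlocal2} for the detailed verification of (i)--(iii).
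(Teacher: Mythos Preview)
Your proposal takes a genuinely different route from the paper. The paper's proof (Section~\ref{sketch}) also uses the Lindeberg swapping strategy and the same telescoping over $O(n^2)$ swaps, but instead of passing through resolvents it Taylor-expands the eigenvalues $\lambda_{i_\ell}(A(t))$ \emph{directly} in the scalar parameter $t$, where $A(t)=A(0)+t(e_pe_q^*+e_qe_p^*)$, and uses the Hadamard variation formulae to bound $\frac{d^j}{dt^j}\lambda_i(A(t))$ for $j\le 5$. These derivatives involve sums such as $\sum_{j\neq i}\frac{|u_i^*A'(0)u_j|^2}{\lambda_j-\lambda_i}$, which are controlled by eigenvector delocalisation (Corollary~\ref{deloc}) together with a lower bound on eigenvalue gaps, encoded in the quantity $Q_i=\sum_{j\neq i}|\lambda_j-\lambda_i|^{-2}$. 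The paper first proves a \emph{truncated} Four Moment Theorem (Theorem~\ref{trunc}) for functions $\tilde G$ additionally supported on $\{Q_{i_\ell}\le n^{c_0}\}$, and then removes the truncation via the Gap Theorem (Theorem~\ref{gap}), whose independent proof under Condition~\condo\ (Theorem~\ref{gap-special}) is itself a substantial recursive argument on minors.

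Your resolvent-expansion approach is instead the Green's function Four Moment Theorem of \cite{EYY}; the paper explicitly remarks that this variant has a simpler proof \emph{for resolvent statistics}, but that recovering control of \emph{individual eigenvalues} from it ``requires a significant amount of additional argument; see \cite{knowles}''. This is exactly your step (i), and it is the real gap in your sketch. To resolve the index $i_\ell$ from resolvent data you are forced to take $\Im z$ below the mean eigenvalue spacing (you write $\Im z\sim n^{-1-c'}$), but Theorem~\ref{lsc} and the entrywise resolvent bounds underlying Corollary~\ref{deloc} are only proved for $\Im z\gg n^{-1}$; at smaller $\eta$ the resolvent entries are not a priori $O(n^{o(1)})$, and your remainder $E_5$ is not controlled by the estimates you cite. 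Moreover, the papers \cite{TVlocal1}, \cite{TVlocal2} that you invoke for the verification of (i)--(iii) do \emph{not} carry out any resolvent-to-eigenvalue reduction---they use the Hadamard-variation route above---so the citation does not close the gap. The resolvent route can be made to work (this is essentially \cite{knowles}, via a Helffer--Sj\"ostrand representation combined with quantitative level repulsion), but it is substantially more delicate than evaluating $s_n(z)$ at finitely many points with $\Im z<1/n$, and your outline does not engage with this difficulty.
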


We remark that in the papers \cite{TVlocal1}, \cite{TVlocal2}, \cite{TVlocal3}, a variant of the above result, which we called the \emph{three moment theorem}, was asserted, in which the hypothesis of four matching moments off the diagonal was relaxed to three matching moments (and no moment matching was required on the diagonal), but for which the bound \eqref{G-deriv} was improved to $|\nabla^j G(x)| \leq n^{-Cjc_0}$ for some sufficiently large absolute constant $C>0$.  Unfortunately, the proof given of the three moment theorem in these papers was not correct as stated, although the claim can still be proven in most cases by other means; see Appendix \ref{erratum}.

A preliminary version of Theorem \ref{theorem:Four} was first established by the authors in \cite{TVlocal1}, in the case\footnote{In the paper, $k$ was held fixed, but an inspection of the argument reveals that it extends without difficulty to the case when $k$ is as large as $n^{c_0}$, for $c_0$ small enough.} of bulk eigenvalues (thus $\delta n \leq i_1,\ldots,i_k \leq (1-\delta) n$ for some absolute constant $\delta > 0$) and assuming Condition \condo\ instead of Condition \condone.  In \cite{TVlocal2}, the restriction to the bulk was removed; and in \cite{TVlocal3}, Condition {\condo} was relaxed to Condition \condone\ for a sufficiently large value of $C_0$.  We will discuss the proof of this theorem in Section \ref{sketch}.

The following technical generalization of the Four Moment Theorem, in which the entries of $M_n,M'_n$ only match approximately rather than exactly, is useful for some applications.

\begin{proposition}[Four Moment Theorem, approximate moment matching case]\label{approx-moment}  The conclusions of Theorem \ref{theorem:Four} continue to hold if the requirement that $\xi_{ij}$ and $\xi'_{ij}$ match to order $4$ is relaxed to the conditions
$$ |\E \Re(\xi_{ij})^a \Im(\xi_{ij})^b - \E \Re(\xi'_{ij})^a \Im(\xi'_{ij})^b| \leq \eps_{a+b}$$
whenever $a,b \geq 0$ and $a+b \leq 4$, where
$$ \eps_0=\eps_1=\eps_2:=0; \quad \eps_3 := n^{-1/2-Cc_0}; \quad \eps_4 := n^{-Cc_0}$$
for some absolute constant $C>0$.
\end{proposition}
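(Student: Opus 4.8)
The plan is to run exactly the swapping (Lindeberg exchange) argument used to prove Theorem~\ref{theorem:Four}, and to observe that the only two places in that argument where \emph{exact} fourth-order matching off the diagonal is invoked are precisely the ones controlled by the hypotheses on $\eps_3$ and $\eps_4$. Recall the skeleton: one orders the $\binom{n}{2}$ off-diagonal (Hermitian) entry-pairs and the $n$ diagonal entries, replaces the entries of $M_n$ by those of $M'_n$ one pair at a time, and bounds the left-hand side of \eqref{eqn:approximation} by the telescoping sum of the per-swap discrepancies; it then suffices to gain $O(n^{-2-c_0})$ on each off-diagonal swap and $O(n^{-1-c_0})$ on each diagonal swap. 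The diagonal swaps are \emph{unchanged} from Theorem~\ref{theorem:Four} (one still matches exactly to order $2$ there), so only the off-diagonal swaps need attention.

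For a single off-diagonal swap, fix all entries except $\xi := \xi_{ij}$, and --- after the usual truncation of all entries at a threshold $n^{c_1}$ for small $c_1 > 0$, which by Condition \condone\ with $C_0$ large perturbs every moment by $O_A(n^{-A})$ and is hence harmless --- write the conditional expectation of the statistic as a function $\phi(\xi) := \E[G(\lambda_{i_1}(A_n),\dots,\lambda_{i_k}(A_n)) \mid \text{other entries}]$ of the pair $\Re\xi,\Im\xi$, and Taylor-expand $\phi$ to fourth order about $\xi=0$ with fifth-order remainder. The proof of Theorem~\ref{theorem:Four} already supplies the quantitative inputs: on an event of overwhelming probability (furnished by eigenvector delocalisation (Corollary~\ref{deloc}), the local semicircle law (Theorem~\ref{lsc}), and level repulsion) one has the derivative bounds $|\partial_\xi^a G(\lambda_{i_1}(A_n),\dots,\lambda_{i_k}(A_n))| = O(n^{-a/2+O(c_0)})$ for $0 \le a \le 5$ (the $n^{-a/2}$ gain coming from the $\tfrac1{\sqrt n}$ normalisation together with delocalised eigenvectors and the fine-scale eigenvalue gaps); consequently the Taylor coefficients obey $|c_{a,b}| = O(n^{-(a+b)/2+O(c_0)})$, and the fifth-order remainder contributes $O(n^{-5/2+O(c_0+c_1)})$ per swap, which is $O(n^{-2-c_0})$ once $c_0,c_1$ are small enough.

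It remains to treat the third- and fourth-order Taylor terms, which in Theorem~\ref{theorem:Four} simply vanished upon subtracting $\E\phi(\xi)$ and $\E\phi(\xi')$ because the relevant moments matched exactly. Here, $\E\phi(\xi)-\E\phi(\xi')$ equals $\sum_{a+b\le 4} c_{a,b}\,(\E\Re(\xi)^a\Im(\xi)^b - \E\Re(\xi')^a\Im(\xi')^b)$ plus the fifth-order remainder difference; the terms with $a+b\le 2$ still vanish ($\eps_0=\eps_1=\eps_2=0$), while the terms with $a+b=3$ (resp.\ $a+b=4$) are bounded by
\begin{equation*}
|c_{a,b}|\,\eps_3 = O(n^{-3/2+O(c_0)})\cdot n^{-1/2-Cc_0}\qquad\bigl(\text{resp. } |c_{a,b}|\,\eps_4 = O(n^{-2+O(c_0)})\cdot n^{-Cc_0}\bigr),
\end{equation*}
each of which is $O(n^{-2-Cc_0+O(c_0)})$. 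Choosing the absolute constant $C$ large enough relative to the fixed implied exponents in these $O(c_0)$ terms --- which one may do uniformly in $c_0$, since $c_0$ is itself taken small --- this is $\le n^{-2-c_0}$ per off-diagonal swap; summing over the $O(n^2)$ off-diagonal swaps and the (unaffected) $n$ diagonal swaps reproduces the bound $n^{-c_0}$ of \eqref{eqn:approximation}.

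I expect no genuinely new obstacle beyond the proof of Theorem~\ref{theorem:Four}; the only real content is the exponent bookkeeping confirming that the loss incurred by the non-vanishing third and fourth moment \emph{mismatches} is more than offset by the gains $n^{-1/2-Cc_0}$ and $n^{-Cc_0}$ built into $\eps_3,\eps_4$, and in particular that $C$ can be chosen before (and independently of) $c_0$ and $c_1$. One should also keep the quantifiers around truncation in the right order: the threshold exponent $c_1$ is chosen \emph{after} $C$, so that the $O_A(n^{-A})$ moment perturbations caused by truncation stay far below $\eps_3 = n^{-1/2-Cc_0}$.
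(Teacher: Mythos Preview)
Your proposal is correct and follows essentially the same approach as the paper: the paper merely states that the proposition ``follows from an inspection of the proof of Theorem~\ref{theorem:Four}'' and, in the proof sketch (Section~\ref{sketch}), notes parenthetically that the same Taylor-expansion-plus-swapping argument yields Proposition~\ref{approx-moment}, since the Taylor coefficients $f_j$ are $O(n^{-j/2+O(c_0)})$ and hence the non-vanishing moment discrepancies at orders $3$ and $4$ contribute $O(n^{-3/2+O(c_0)})\eps_3$ and $O(n^{-2+O(c_0)})\eps_4$ per swap, exactly as you compute. Your write-up in fact supplies more detail than the paper does, including the correct bookkeeping that $C$ is fixed before $c_0$ is taken small.
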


This proposition follows from an inspection of the proof of Theorem \ref{theorem:Four}: see Section \ref{sketch}.

A key technical result used in the proof of the Four Moment Theorem, which is also of independent interest, is the \emph{gap theorem}:

\begin{theorem}[Gap theorem]\label{gap}  Let $M_n$ be a Wigner matrix obeying Condition \condone\ for a sufficiently large absolute constant $C_0$.  Then for every $c_0>0$ there exists a $c_1>0$ (depending only on $c_0$) such that
$$ \P( |\lambda_{i+1}(A_n) - \lambda_i(A_n)| \leq n^{-c_0} ) \ll n^{-c_1}$$
for all $1 \leq i < n$.
\end{theorem}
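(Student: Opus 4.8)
The plan is to fix the index $i$, localise the relevant energy using the local semicircle law, and then show that an unusually small gap at $i$ forces a quadratic form in an eigenvector of an $(n-1)\times(n-1)$ minor to be anomalously small --- an event that is excluded by an anti-concentration (small-ball) estimate. Since $\lambda_{i+1}(A_n)-\lambda_i(A_n)=n(\lambda_{i+1}(W_n)-\lambda_i(W_n))$, it suffices to bound $\P(\lambda_{i+1}(W_n)-\lambda_i(W_n)\le\delta)$ with $\delta:=n^{-1-c_0}$. By the local semicircle law and eigenvalue rigidity (valid under Condition \condone\ with $C_0$ large, cf.\ Theorem \ref{lsc} and \eqref{eigenrigid}), with overwhelming probability $\lambda_i(W_n)$ lies within $n^{-1+o(1)}$ of its classical location; hence it is enough to prove the bound on this event, and --- after a union bound over a net of $n^{o(1)}$ candidate energies --- with the energy confined to a window of length $n^{-1+o(1)}$, which by symmetry we may place in the bulk (the edge is handled the same way, and is easier since the spectral density is lower there).

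For the core estimate I would condition on the top-left minor $\tilde W_{n-1}=\tfrac{1}{\sqrt n}M_{n-1}$ and use the block decomposition \eqref{block-matrix}: the eigenvalues of $W_n$ are exactly the real zeros of
$$ h(z):=z-\tfrac{1}{\sqrt n}\xi_{nn}+\tfrac1n\sum_{j=1}^{n-1}\frac{|u_j(\tilde W_{n-1})^{*}X_n|^{2}}{\lambda_j(\tilde W_{n-1})-z} $$
(cf.\ \eqref{snn}). Cauchy interlacing \eqref{interlace} forces $\mu:=\lambda_i(\tilde W_{n-1})$ to lie between $\lambda_i(W_n)$ and $\lambda_{i+1}(W_n)$, so a gap of size $\le\delta$ at $i$ places both of these zeros of $h$ in $[\mu-\delta,\mu+\delta]$. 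Write $h(z)=g(z)-\frac{c}{z-\mu}$, where $c:=\tfrac1n|u_i(\tilde W_{n-1})^{*}X_n|^{2}$ is the residue at the pole $\mu$ and $g$ collects the remaining terms. On the event $\mathcal G$ that the eigenvalues of $\tilde W_{n-1}$ adjacent to $\mu$ lie at distance $\gg\delta$ from $\mu$, the function $g$ is real-analytic on a neighbourhood of $[\mu-\delta,\mu+\delta]$; evaluating $h=0$ at $\lambda_i(W_n)$ and at $\lambda_{i+1}(W_n)$ and subtracting shows that $g$ varies by at least $2c/\delta$ over an interval of length $\le\delta$, and hence $\sup_{[\mu-\delta,\mu+\delta]}|g'|\gg c/\delta^{2}$. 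Since $g'(z)=1+\tfrac1n\sum_{j\ne i}\frac{|u_j(\tilde W_{n-1})^{*}X_n|^{2}}{(\lambda_j(\tilde W_{n-1})-z)^{2}}$, eigenvector delocalisation for $\tilde W_{n-1}$ (Corollary \ref{deloc}), Condition \condone, and the local semicircle law together bound $\sup_{[\mu-\delta,\mu+\delta]}|g'|\le n^{1+o(1)}$ on $\mathcal G$ intersected with a further overwhelmingly likely event. Combining these, $|u_i(\tilde W_{n-1})^{*}X_n|^{2}=nc\ll n\delta^{2}\sup|g'|\le n^{-2c_0+o(1)}$.

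The last step is anti-concentration. Conditionally on $\tilde W_{n-1}$, the vector $u_i(\tilde W_{n-1})$ is a fixed unit vector, $X_n$ has jointly independent, mean-zero, variance-one entries with bounded moments (Condition \condone) and is independent of $\tilde W_{n-1}$, and $u_i(\tilde W_{n-1})$ is delocalised with all coordinates $O(n^{-1/2+o(1)})$. Hence, by the Berry--Ess\'een theorem, $u_i(\tilde W_{n-1})^{*}X_n$ is approximately gaussian, so $\P\bigl(|u_i(\tilde W_{n-1})^{*}X_n|\le t\bigr)\ll t+n^{-1/2+o(1)}$ uniformly over the conditioning. Taking $t=n^{-c_0+o(1)}$ shows that the event of the previous paragraph has probability $\ll n^{-c_1}$ for a suitable $c_1>0$; collecting the exceptional probabilities there, together with $\P(\mathcal G^{c})$, completes the argument.

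The main obstacle is controlling $\P(\mathcal G^{c})$, equivalently justifying the bound $\sup_{[\mu-\delta,\mu+\delta]}|g'|\le n^{1+o(1)}$: if $\tilde W_{n-1}$ had an eigenvalue clustered within $O(\delta)$ of $\mu$, the poles of $g$ at $\lambda_{i\pm1}(\tilde W_{n-1})$ would make $g'$ blow up faster than the argument can absorb. This is a statement of the same strength as the gap theorem for $\tilde W_{n-1}$, so the proof must be arranged so that it does not simply feed on itself: one uses the local semicircle law to peel off the $n^{o(1)}$ eigenvalues of $\tilde W_{n-1}$ lying within $n^{-1+\eps}$ of $\mu$ (so that the remaining, more distant, eigenvalues contribute only $n^{1-2\eps+o(1)}$ to $\sup|g'|$), one exploits the freedom to remove any of the $n$ rows and columns rather than only the last, and one runs a bootstrap in which a weaker form of the conclusion (with a poorer exponent) is established first and then iteratively sharpened --- the heat-flow and reverse-heat-flow universality results of Section \ref{heatflow} being available to seed the scheme for gauss divisible and smoothly distributed ensembles. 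This coupling of quadratic-form anti-concentration with fine-scale control of the minor's spectrum is where essentially all the difficulty lies, and the same circle of ideas later underpinned the proof that Wigner matrices almost surely have simple spectrum.
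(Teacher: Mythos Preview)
Your core reduction --- small gap $\Rightarrow$ the residue $c=\tfrac1n|u_i(\tilde W_{n-1})^*X_n|^2$ is small $\Rightarrow$ anti-concentration --- does not close, and the difficulty is exactly the one you flag at the end, but it is fatal rather than merely technical. The bound $\sup_{[\mu-\delta,\mu+\delta]}|g'|\le n^{1+o(1)}$ does \emph{not} follow from $\mathcal G$ as you define it. A pole of $g$ at distance $\alpha$ from $\mu$ contributes $\sim \tfrac{1}{n\alpha^2}$ to $|g'|$; with $\alpha\asymp\delta=n^{-1-c_0}$ this already gives $|g'|\gtrsim n^{1+2c_0}$, which makes the resulting upper bound on $|u_i^*X_n|^2$ of order $n^{o(1)}$, i.e.\ its typical size, so nothing is gained. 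To force $|g'|\le n^{1+o(1)}$ you would need $\alpha\ge n^{-1+o(1)}$, i.e.\ a minor gap of size $\ge n^{o(1)}$ on the $A$-scale, and then $\P(\mathcal G^c)$ is not small at all. More generally, for any choice of $\alpha$ the argument reduces Gap$(c_0)$ for $A_n$ to Gap$(c_0')$ for $A_{n-1}$ with $c_0'<c_0$; since $\{\text{gap}\le n^{-c_0}\}\subset\{\text{gap}\le n^{-c_0'}\}$, Gap$(c_0')$ is strictly \emph{stronger}, so the bootstrap runs in the wrong direction and cannot be seeded. Seeding with gauss divisible ensembles does not help either: the minor of your Wigner matrix is again a Wigner matrix of the same type (e.g.\ Bernoulli stays Bernoulli), so level repulsion for gauss divisible matrices says nothing about $\P(\mathcal G^c)$. (There is also a smaller circularity: you invoke \eqref{eigenrigid} under Condition \condone, but in this paper that is obtained via the Four Moment Theorem, which itself rests on the gap theorem.)

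The paper's argument uses the same secular equation but draws the \emph{opposite} conclusion and propagates it. If $\lambda_{i+1}(A_n)-\lambda_i(A_n)$ is small, then by interlacing the $j=i$ term in \eqref{joan} is large; to balance the equation, some negative block (heuristically the $j=i-1$ term) must also be large, which forces $\lambda_i(A_n)-\lambda_{i-1}(A_{n-1})$ to be small as well. Repeating at $i+1$ yields that $\lambda_{i+1}(A_{n-1})-\lambda_{i-1}(A_{n-1})$ is small. One then iterates this ``small gap persists and widens in index'' step $k\sim\log^C n$ times (using a regularised gap to handle the slow decay of $1/(\lambda_j-\lambda_i)$), eventually squeezing $\gtrsim k$ eigenvalues of $A_{n-k}$ into an interval short enough that the local semicircle law gives a contradiction. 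Anti-concentration enters here too, but in the other direction: one needs the numerators $|u_j(A_{n-1})^*X|^2$ to be \emph{not too small} so that a small denominator really produces a large term; controlling the exceptional events where some numerator is tiny is the delicate part and is where Condition \condo\ is used. The full statement under Condition \condone\ is then obtained by proving a truncated Four Moment Theorem (which does not require the gap property) and using it to transfer the gap property from the \condo\ case.
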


We discuss this theorem in Section \ref{sketch}.  Among other things, the gap theorem tells us that eigenvalues of a Wigner matrix are usually simple.  Closely related \emph{level repulsion} estimates were established (under an additional smoothness hypothesis on the atom distributions) in \cite{ESY3}.

Another variant of the Four Moment Theorem was subsequently introduced in \cite{EYY}, in which the eigenvalues $\lambda_{i_j}(A_n)$ appearing in Theorem \ref{theorem:Four} were replaced by expressions such as \eqref{wnz} that are derived from the resolvent (or Green's function) $(W_n-z)^{-1}$, but with slightly different technical hypotheses on the matrices $M_n, M'_n$; see \cite{EYY} for full details.  As the resolvent-based quantities \eqref{wnz} are averaged statistics that sum over many eigenvalues, they are far less sensitive to the eigenvalue repulsion phenomenon than the individual eigenvalues, and as such the version of the Four Moment Theorem for Green's function has a somewhat simpler proof (based on resolvent expansions rather than the Hadamard variation formulae and Taylor expansion).  Conversely, though, to use the Four Moment Theorem for Green's function to control individual eigenvalues, while possible, requires a significant amount of additional argument; see \cite{knowles}.
 
We now discuss the extension of the Four Moment Theorem to eigenvectors rather than eigenvalues.  Recall that we are using $u_1(M_n),\ldots,u_n(M_n)$ to denote the unit eigenvectors of a Hermitian matrix $M_n$ associated to the eigenvalues $\lambda_1(M_n),\ldots,\lambda_n(M_n)$, thus $u_1(M_n),\ldots,u_n(M_n)$ lie in the unit sphere $S^{2n-1} := \{ z \in \C^n: |z| = 1 \}$ is the unit sphere of $\C^n$.  We write $u_{i,p}(M_n) = u_i(M_n)^* e_p$ for the $p$-th coefficient of $u_i(M_n)$ for each $1 \leq i,p \leq n$.  If $M_n$ is not  Hermitian, but is in fact real symmetric, then we can require the $u_i(M_n)$ to have real coefficients, thus taking values in the unit sphere $S^{n-1} := \{ x \in \R^n: |x|=1\}$ of $\R^n$.

Unfortunately, the eigenvectors $u_i(M_n)$ are not unique in either the Hermitian or real symmetric cases; even if one assumes that the spectrum of $M_n$ is \emph{simple}, in the sense that
$$ \lambda_1(M_n) <  \ldots < \lambda_n(M_n),$$ 
one has the freedom to rotate each $u_i(M_n)$ by a unit phase $e^{\sqrt{-1}\theta} \in U(1)$.  In the real symmetric case, in which we force the eigenvectors to have real coefficients, one only has the freedom to multiply each $u_i(M_n)$ by a sign $\pm \in O(1)$.  There are a variety of ways to eliminate this ambiguity.  For sake of concreteness we will remove the ambiguity by working with the orthogonal projections $P_i(M_n)$ to the eigenspace at eigenvalue $\lambda_i(M_n)$; if this eigenvalue is simple, we simply have $P_i(M_n) := u_i(M_n) u_i(M_n)^*$.

\begin{theorem}[Four Moment Theorem for eigenvectors]\label{theorem:Four2}  Let $c_0,M_n,M'_n,C_0,A_n,A'_n,k$ be as in Theorem \ref{theorem:Four}.  Let $G: \R^k \times \C^{k} \to \R$ be a smooth function obeying the derivative bounds
\begin{equation}\label{G-deriv-2}
|\nabla^j G(x)| \leq n^{c_0}
\end{equation}
for all $0 \leq j \leq 5$ and $x \in \R^k \times \C^{k}$.
 Then for any $1 \le i_1, i_2, \ldots, i_k \le n$ and $1 \leq p_1,\ldots,p_k,q_1,\ldots,q_k \leq n$, and for $n$ sufficiently large depending on $\eps,c_0, C_0$ we have
\begin{equation} \label{eqn:approximation-0}
|\E G( \Phi(A_n) ) - \E G( \Phi(A'_n) )| \leq n^{-c_0}
\end{equation}
where for any matrix $M$ of size $n$, $\Phi(M) \in \R^k \times \C^{k}$ is the tuple
$$ \Phi(M) := \left( (\lambda_{i_a}(M))_{1 \leq a \leq k}, ( n P_{i_a,p_a,q_a}(M) )_{1 \leq a \leq k} \right),$$
and $P_{i,p,q}(M)$ is the $pq$ coefficient of the projection $P_i(M)$.  The bounds are uniform in the choice of $i_1,\ldots,i_k,p_1,\ldots,p_k,q_1,\ldots,q_k$.
\end{theorem}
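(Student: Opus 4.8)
The plan is to mimic the proof of Theorem \ref{theorem:Four} as closely as possible, replacing the Hadamard variation formulae for eigenvalues by the corresponding variation formulae for the spectral projections $P_i(M)$. As in the eigenvalue case, I would set up a Lindeberg-style telescoping argument: fix an enumeration of the $n^2$ upper-triangular (and diagonal) entries of the matrix, and swap the entries of $M_n$ for those of $M'_n$ one at a time, producing an intermediate matrix at each stage. The goal is to show that each off-diagonal swap changes $\E G(\Phi(\cdot))$ by $o(n^{-2-c_0})$ and each diagonal swap by $o(n^{-1-c_0})$, so that summing the telescoping series gives the bound \eqref{eqn:approximation-0}. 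The key analytic input is a Taylor expansion, in the single scalar variable being swapped, of the quantity $G(\Phi(M))$ where $M$ differs from a fixed ``frozen'' matrix only in one entry $\xi_{pq}$ (and its conjugate $\xi_{qp}$); after expanding to fourth order and taking expectations, the first four moments of $\xi_{pq}$ and $\xi'_{pq}$ agree (respectively the first two on the diagonal), so the leading terms cancel and one is left with the fifth-order error term, which must be shown to be acceptably small with the required probability.

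The main new ingredient, relative to Theorem \ref{theorem:Four}, is a quantitative control on the derivatives of $t \mapsto P_i(M + t V)$ where $V$ is the rank-one (or rank-two, after Hermitian symmetrization) perturbation supported on the $(p,q)$ and $(q,p)$ entries. These derivatives are given by the standard resolvent/contour-integral formula $P_i(M) = \frac{1}{2\pi i}\oint_{\gamma_i} (zI - M)^{-1}\,dz$, where $\gamma_i$ is a small contour enclosing only $\lambda_i(M)$; differentiating under the integral sign produces expressions involving products of resolvents $(zI-M)^{-1}$ and the perturbation $V$. The size of these derivatives is controlled by (a) the spectral gap $\min(\lambda_i - \lambda_{i-1}, \lambda_{i+1} - \lambda_i)$, which dictates how small $\gamma_i$ must be taken, and (b) the delocalization of the relevant eigenvectors, i.e.\ the bound $|u_{i,p}(W_n)| = O(n^{-1/2+\eps})$ from Corollary \ref{deloc}, which controls the matrix elements of $V$ against the resolvent. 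This is exactly the place where the Gap Theorem (Theorem \ref{gap}) and the local semicircle law (Theorem \ref{lsc}) enter: the Gap Theorem guarantees that, with probability $1 - O(n^{-c_1})$, the nearby eigenvalue gaps are $\gg n^{-c_0}$, so the derivative bounds hold on a good event; on the (polynomially small) bad event one controls the contribution by a cruder deterministic bound (e.g.\ $|nP_{i,p,q}| \le n$ trivially) together with the polynomially small probability.

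The hardest part will be organizing the derivative estimates for the projection coefficients so that the fifth-order Taylor remainder, after multiplication by $\E|\xi_{pq}|^5 = O(1)$ (from Condition \condone with $C_0$ large), really is $o(n^{-2})$ per off-diagonal swap on the good event — in other words, verifying the ``numerology'' that each matching moment buys a factor $n^{-1/2}$, now for the projection-valued observables rather than the eigenvalues. One must be careful that the $k$ different indices $i_1,\dots,i_k$ (with $k$ allowed to grow like $n^{c_0}$) and the extra coordinate directions $p_a, q_a$ do not introduce uncontrolled combinatorial losses; this is handled by keeping the derivative bounds uniform in the indices and absorbing the at most $k^{O(1)} n^{O(c_0)}$ losses into the $n^{c_0}$ slack, exactly as in the proof of Theorem \ref{theorem:Four}. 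A secondary subtlety is that the intermediate matrices formed during the swapping are no longer Wigner matrices with identically distributed entries, so one needs the local semicircle law, delocalization, and Gap Theorem in the slightly more general non-identically-distributed setting; as remarked in the text, all these results extend to that setting with only notational changes. Once these estimates are in hand, the telescoping sum closes and \eqref{eqn:approximation-0} follows. In fact, since the eigenvalue coordinates of $\Phi(M)$ are handled exactly as in Theorem \ref{theorem:Four}, the genuinely new work is confined to the projection coordinates, and the approximate-matching version (in the spirit of Proposition \ref{approx-moment}) follows by the same inspection of the remainder terms.
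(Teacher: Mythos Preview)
Your overall strategy is correct and matches the approach the paper follows (via \cite{TV-vector}): Lindeberg swapping, Taylor expansion to fourth order in the swapped entry, and derivative control on $t \mapsto P_i(A(t))$ coming from the Gap Theorem and eigenvector delocalization. One cosmetic difference: the paper obtains the derivative bounds on $P_i$ via recursive Hadamard-type variation formulae (as alluded to in the sketch of Theorem~\ref{theorem:Four}, where it is remarked that the same recursive scheme handles $P_i(A(t))$ and $Q_i(A(t))$), rather than through your contour-integral representation; the two are equivalent, but the Hadamard route keeps the bookkeeping exactly parallel to the eigenvalue case.

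There is, however, one step in your sketch that does not work as written. You propose to handle the bad event (an anomalously small eigenvalue gap) by pairing a crude deterministic bound such as $|nP_{i,p,q}| \le n$ with the bad-event probability $O(n^{-c_1})$. But this estimate is being applied \emph{inside the per-swap analysis}, and there are $O(n^2)$ swaps; a contribution of order $n^{c_0 - c_1}$ per swap sums to something enormous, not $n^{-c_0}$. The paper's remedy (Theorem~\ref{trunc}) is not to split each swap into good and bad cases, but rather to replace $G$ from the outset by a truncated function $\tilde G$ that also depends on the gap-regularity quantities $Q_{i_a}$ and is supported on $\{Q_{i_a} \le n^{c_0}\}$. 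On this support the derivative bounds for $\lambda_i$ and $P_i$ hold unconditionally (modulo a continuity argument in $t$, to propagate the bound on $Q_i$ from $t=0$ to nearby $t$), so the per-swap Taylor expansion goes through with no bad-event splitting at all. The Gap Theorem is then invoked only \emph{once}, at the very end, to bound $|\E G(\Phi) - \E \tilde G(\Phi)|$ by $O(n^{-c_1})$ separately for $A_n$ and $A'_n$.
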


Theorem \ref{theorem:Four2} extends (the first part of) Theorem \ref{theorem:Four2}, which deals with the case where the function $G$ only depends on the $\R^k$ component of $\R^k \times \C^k$.  This theorem as stated appears in \cite{TV-vector}; a slight variant\footnote{Besides the differences in the methods of proof, the hypotheses of the result in \cite{knowles} differ in some technical aspects from those in Theorem \ref{theorem:Four2}.  For instance, Condition \condone is replaced with Condition \condo, and $k$ is restricted to be bounded, rather than being allowed to be as large as $n^{c_0}$.   On the other hand, the result is sharper at the edge of the spectrum (one only requires matching up to two moments, rather than up to four), and the result can be extended to ``generalized Wigner matrices'' for which the variances of the entries are allowed to differ, provided that a suitable analogue of Theorem \ref{gap} holds.}  of the theorem (proven via the Four Moment Theorem for Green's function) was simultaneously\footnote{More precisely, the results in \cite{TV-vector} were announced at the AIM workshop ``Random matrices'' in December 2010 and appeared on the arXiv in March 2011.  A preliminary version of the results in \cite{knowles} appeared in February 2011, with a final version appearing in March 2011.} established in \cite{knowles}.

We also remark that the Four Moment Theorem for eigenvectors (in conjunction with the eigenvalue rigidity bound \eqref{eigenrigid}) can be used to establish a variant of the Four Moment Theorem for Green's function, which has the advantage of being applicable all the way up to the real axis (assuming a level repulsion hypothesis); see \cite{TVeigenvector}.

\subsection {The necessity of Four Moments} 

It is a natural question to ask whether the requirement of four matching moments (or four approximately matching moments, as in Proposition \ref{approx-moment}) is genuinely necessary.  As far as the distribution of individual eigenvalues $\lambda_i(A_n)$ are concerned, the answer is essentially ``yes'', even in the identically distributed case, as the following result from \cite{TVnec} shows.

\begin{theorem}[Necessity of fourth moment hypothesis]\label{theorem:nec}  Let $M_n, M'_n$ be real symmetric Wigner matrices whose atom variables $\xi,\xi'$ have vanishing third moment $\E \xi^3 = \E (\xi')^3=0$ but with distinct fourth moments $\E \xi^4 \neq \E (\xi')^4$.  Then for all sufficiently large $n$, one has
$$ \frac{1}{n} \sum_{i=1}^n | \E \lambda_i(A_n) - \E \lambda_i(A'_n)  | \geq \kappa$$
for some $\kappa$ depending only on the atom distributions.
\end{theorem}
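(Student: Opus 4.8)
The plan is to exhibit a single smooth, slowly-varying statistic of the eigenvalues whose expectation is governed, to leading order, by the fourth moment of the atom distribution, and then to combine the Four Moment Theorem (Theorem \ref{theorem:Four}) as a ``rigidity of everything but the fourth moment'' tool with a direct computation that isolates the fourth-moment dependence. Concretely, I would fix a reference real symmetric Wigner matrix $W_n$ with the same first three moments as both $M_n$ and $M'_n$ (say with $\xi$ having a smooth distribution, so that the heat-flow/GUE machinery applies), and introduce the quantity $Y_n := \frac1n \sum_{i=1}^n \E \lambda_i(A_n)$ together with its analogue $Y_n'$ and $Y_n^{\text{ref}}$. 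Since $\sum_i \lambda_i(A_n) = \tr A_n = \sqrt n\,\tr M_n$ has expectation zero, the unweighted sum is useless; so instead I would work with a weighted trace such as $\frac1n\sum_i \psi(\lambda_i(A_n)/n)$ for a fixed smooth bump $\psi$ supported in $(-2,2)$, or more to the point, with the averaged eigenvalue-location functional $\frac1n \sum_i \phi\big( (\lambda_i(A_n) - \lambda_i^{\cl}(A_n)) \big)$. The key analytic input is a \emph{second-order} swapping expansion: replacing one off-diagonal entry pair $(\xi_{ij},\xi_{ji})$ of $M_n$ by $(\xi'_{ij},\xi'_{ji})$ changes $\E G(\text{eigenvalues})$ by a sum of terms indexed by the moments of the entries; the terms of order $0,1,2,3$ cancel by the matching hypothesis (first three moments agree, third moment vanishes), and the leading surviving term is proportional to $(\E\xi^4 - \E(\xi')^4)\, n^{-2}\, \E\big[ (\partial_{ij})^4 G \big]_{M = W_n}$, where $(\partial_{ij})^4$ denotes the appropriate fourth-order Hadamard variation of the eigenvalue functional with respect to that entry. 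Summing over the $\sim n^2/2$ off-diagonal positions produces a main term of size $(\E\xi^4 - \E(\xi')^4) \cdot \overline{(\partial^4 G)}$, where $\overline{(\partial^4 G)}$ is an average of fourth Hadamard derivatives over all entry positions, evaluated at the reference ensemble.

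The second step is to show this averaged fourth Hadamard derivative does \emph{not} vanish for a suitable choice of test functional $G$ (equivalently $\phi$ or $\psi$). This is where I would use the GUE theory of Section \ref{gue-sec}: for the reference ensemble, the relevant average $\frac1{n^2}\sum_{i<j} \E (\partial_{ij})^4 \big[\frac1n\sum_k \phi(\cdots)\big]$ can be computed (or at least its asymptotics extracted) via the explicit Gaudin--Mehta/determinantal formulae, or alternatively via the known sensitivity of the one-point function $R_n^{(1)}$ to a perturbation of the fourth cumulant — this is exactly the content, in the physics literature, of the statement that the fourth cumulant shifts the edge and deforms the bulk density at order $1/n$. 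One can in fact see this more robustly: the Hadamard formula gives $\partial_{ij}\lambda_k = 2\Re\big(\overline{u_{k,i}} u_{k,j}\big)$ (off-diagonal case), and iterating, $(\partial_{ij})^2 \lambda_k$, $(\partial_{ij})^4 \lambda_k$ are explicit rational expressions in the eigenvalues and eigenvector coefficients; taking $\phi$ to be, say, a generic smooth function and using eigenvector delocalisation (Corollary \ref{deloc}) plus the local semicircle law to compute the expectation of the resulting expression shows that the leading contribution is a nonzero universal multiple of $\int \phi'' \cdot (\text{something nonvanishing})$, unless $\phi$ is chosen pathologically. Choosing $\phi$ generically then forces $\overline{(\partial^4 G)}$ to be bounded below by an absolute constant (up to $n^{o(1)}$ factors), and hence $|Y_n - Y_n'| \gg |\E\xi^4 - \E(\xi')^4| \gg 1$, with the implied constant depending only on the atom distributions.

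The third step is bookkeeping and error control: the second-order swapping expansion has remainder terms of size $O(n^{-2} \cdot n^{-1/2} \cdot \|\nabla^5 G\|)$ per swap (the extra $n^{-1/2}$ coming from the fifth-order Taylor term in the matrix entry, which has size $n^{-5/2}$ against $\sim n^2$ positions, under Condition \condone\ with $C_0$ large enough to control the fifth moment via truncation), so the total error over all swaps is $o(1)$ as long as $\|\nabla^5 G\| = n^{o(1)}$; this is exactly the regime where Theorem \ref{theorem:Four} also applies, and in fact one can simply cite Proposition \ref{approx-moment} and the proof of Theorem \ref{theorem:Four} for the rigorous form of the swapping estimate, since that proof already contains the full Taylor/Hadamard expansion — one only needs to extract the fourth-order term rather than discarding it. Finally, passing from the statement about $\frac1n\sum_i \phi(\lambda_i(A_n)-\lambda_i^{\cl}(A_n))$ to the stated $\frac1n\sum_i |\E\lambda_i(A_n) - \E\lambda_i(A'_n)|$ is done by duality: if $\frac1n\sum_i |\E\lambda_i(A_n) - \E\lambda_i(A'_n)|$ were smaller than $\kappa$, then $\big|\frac1n\sum_i (\E\phi(\lambda_i(A_n)) - \E\phi(\lambda_i(A'_n)))\big|$ would be at most $\|\phi'\|_\infty \cdot \kappa$ (after also controlling the fluctuation of $\lambda_i$ around its mean using the gap theorem / rigidity so that $\E\phi(\lambda_i) \approx \phi(\E\lambda_i)$ up to small error), contradicting the lower bound just established once $\kappa$ is chosen small enough.

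The main obstacle I expect is Step 2: verifying that the averaged fourth Hadamard derivative of the chosen functional genuinely fails to vanish, rather than being killed by some hidden symmetry or cancellation. Naively one worries that after averaging over all entry positions and over the randomness, the fourth-order variation could average to zero (as the first-order one does, since $\E \tr A_n = 0$). Resolving this cleanly is why I would anchor the computation at the GUE (or a smooth gauss-divisible) reference ensemble, where the one-point correlation function and its perturbation theory in the fourth cumulant are explicitly known, so that the nonvanishing becomes a concrete (if slightly involved) asymptotic computation with the Hermite kernel rather than an abstract claim; the delocalisation and local-semicircle inputs are what let one transfer that nonvanishing to the general reference ensemble if one prefers not to insist on GUE.
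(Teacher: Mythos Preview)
Your approach is far more elaborate than necessary, and the paper's actual argument is essentially a two-line observation that you have buried as a special case without recognizing it. The paper simply computes the trace moment
\[
\E \sum_{i=1}^n \lambda_i(A_n)^4 \;=\; \E \tr A_n^4 \;=\; n^2\, \E \tr M_n^4
\]
directly by elementary combinatorics (counting closed walks of length four on the index set). Under the hypothesis of vanishing third moment, the only non-universal contribution comes from walks that traverse a single edge four times, yielding
\[
\E \tr A_n^4 - \E \tr (A'_n)^4 \;=\; c\, n^4\bigl(\E\xi^4 - \E(\xi')^4\bigr) + O(n^3)
\]
for an explicit nonzero constant $c$. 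One then writes $\lambda_i(A_n) = \lambda_i^{\cl}(A_n) + \Delta_i$ with $|\Delta_i| = O(n^{\eps})$ from rigidity \eqref{eigenrigid}, Taylor-expands $\lambda_i^4$, and notes that all terms beyond the linear one contribute at most $O(n^{3+O(\eps)})$ after summing. This forces $\bigl|\sum_i (\lambda_i^{\cl})^3(\E\lambda_i(A_n)-\E\lambda_i(A'_n))\bigr| \gg n^4$, and since $|(\lambda_i^{\cl})^3| \le 8n^3$ the triangle inequality gives the claim. No Hadamard variation, no reference ensemble, no duality, and your Step~2 (nonvanishing) is a triviality of the walk count rather than a Hermite-kernel computation.

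Your general framework is not wrong in spirit---it is precisely the $\psi(x)=x^4$ case of your ``weighted trace $\tfrac1n\sum_i\psi(\lambda_i(A_n)/n)$'' suggestion---but you did not notice that a \emph{polynomial} test function turns the functional into a trace power, making the fourth-moment dependence directly computable and collapsing Steps~1--2. More seriously, the variant you flag as ``more to the point'', namely $\tfrac1n\sum_i\phi(\lambda_i(A_n)-\lambda_i^{\cl}(A_n))$, has a genuine gap in Step~3: the correction $\E\phi(\lambda_i)-\phi(\E\lambda_i)$ is of order $\|\phi''\|\cdot\Var(\lambda_i) = O(\|\phi''\|\,n^{o(1)})$, and since the fourth moments differ you have no reason to expect $\Var(\lambda_i(A_n))$ and $\Var(\lambda_i(A'_n))$ to agree to this accuracy (the Four Moment Theorem is unavailable precisely here). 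These second-order corrections can therefore be as large as the main term you are trying to isolate. In the paper's argument this issue evaporates because rigidity alone bounds $\E\Delta_i^2 = O(n^{o(1)})$ for both ensembles, and after multiplication by $(\lambda_i^{\cl})^2 = O(n^2)$ and summation over $i$ the contribution is $O(n^{3+o(1)})$, safely below the $n^4$ main term.
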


This result is established by combining a computation of the fourth moment $\sum_{i=1}^n \lambda_i(A_n)^4$ with eigenvalue rigidity estimates such as \eqref{eigenrigid}.  Informally, it asserts that on average, the \emph{mean} value of $\lambda_i(A_n)$ is sensitive to the fourth moment of the atom distributions at the scale of the mean eigenvalue spacing (which is comparable to $1$ in the bulk at least).  In contrast, the Four Moment Theorem morally\footnote{This is not quite true as stated, because of the various error terms in the Four Moment Theorem, and the requirement that the function $G$ in that theorem is smooth.  A more accurate statement (cf. the proof of Theorem \ref{gust} below) is that if the median of $\lambda_i(A_n)$ is $M$ (thus $\P( \lambda_i(A_n) \leq M ) = 1/2$, in the continuous case at least), then one has 
$$\P( \lambda_i(A'_n) \leq M + n^{-c_0} ), \P( \lambda_i(A'_n) \geq M - n^{-c_0} ) \geq 1/2-n^{-c_0},$$
which almost places the median of $\lambda_i(A'_n)$ within $O(n^{-c_0})$ of $M$.} asserts that when the atom variables of $M_n$ and $M'_n$ match to fourth order, then the \emph{median} of $\lambda_i(A_n)$ and of $\lambda_i(A'_n)$ only differ by $O(n^{-c_0})$.  Thus, Theorem \ref{theorem:nec} and Theorem \ref{theorem:Four} are not directly comparable to each other.  Nevertheless it is expected that the mean and median of $\lambda_i(A_n)$ should be asymptotically equal at the level of the mean eigenvalue spacing, although this is just beyond the known results (such as \eqref{eigenrigid}) on the distribution of these eigenvalues.  As such, Theorem \ref{theorem:nec} provides substantial evidence that the Four Moment Theorem breaks down if one does not have any sort of matching at the fourth moment.

By computing higher moments of $\lambda_i(A_n)$, it was conjectured in \cite{TVnec} that one has an asymptotic of the form
\begin{equation}\label{lamconj}
\E \lambda_i(A_n) = n \lambda_i^{\cl}(W_n) + C_{i,n} + \frac{1}{4} (\lambda_i^{\cl}(W_n)^3 - 2 \lambda_i^\cl(W_n)) \E \xi^4 + O(n^{-c} )
\end{equation}
for all $i$ in the bulk region $\delta n \leq i \leq (1-\delta) n$, where $C_{i,n}$ is a quantity independent of the atom distribution $\xi$.  (At the edge, the dependence on the fourth moment is weaker, at least when compared against the (now much wider) mean eigenvalue spacing; see Section \ref{dist-sec}.)

We remark that while the statistics of \emph{individual} eigenvalues are sensitive to the fourth moment, \emph{averaged} statistics such as the $k$-point correlation functions $\rho_{n,u}^{(k)}$ are much less sensitive to this moment (or the third moment).  Indeed, this is already visible from the results in Section \ref{heatflow}, as gauss divisible matrices can have a variety of possible third or fourth moments for their atom distributions (see Lemma \ref{match}).

\section{Sketch of proof of four moment theorem}\label{sketch}

In this section we discuss the proof of Theorem \ref{theorem:Four} and Theorem \ref{gap}, following the arguments that originated in \cite{TVlocal1} and refined in \cite{TVlocal3}.  To simplify the exposition, we will just discuss the four moment theorem; the proof of the approximate four moment theorem in Proposition \ref{approx-moment} is established by a routine modification of the argument.

For technical reasons, the two theorems need to be proven together.  Let us say that a Wigner matrix $M_n$ has the \emph{gap property} if it obeys the conclusion of Theorem \ref{gap}; thus Theorem \ref{gap} asserts that all Wigner matrices obeying Condition \condone\ for sufficiently large $C_0$ have the gap property.  We do not know of a direct proof of this result that does not go through the Four Moment Theorem; however, it is possible to establish an independent proof of a more restrictive result:

\begin{theorem}[Gap theorem, special case]\label{gap-special}  Any Wigner matrix obeying Condition \condo\ has the gap property.
\end{theorem}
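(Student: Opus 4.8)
The plan is to establish the gap property for Wigner matrices satisfying Condition \condo\ by combining the local semicircle law (Theorem \ref{lsc}) with a conditioning/resolvent argument of the type used in the proof of Corollary \ref{deloc}. The key point is that we cannot invoke the full Four Moment Theorem (since that would be circular), so we must argue directly, but we are allowed the stronger decay hypothesis \condo, which gives us overwhelming-probability control on delocalisation of eigenvectors and on the local eigenvalue count. Fix $1 \le i < n$ and a small constant $c_0 > 0$; we want to show $\P(\lambda_{i+1}(A_n) - \lambda_i(A_n) \le n^{-c_0}) \ll n^{-c_1}$ for some $c_1 > 0$. Rescaling, this is the event that $W_n$ has two eigenvalues within $n^{-1-c_0}$ of each other near some location $u$.

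First I would reduce to the bulk and handle the edge separately. In the bulk, condition on the minor $\tilde W_{n-1}$ obtained by removing the last row and column, so that by Cauchy interlacing \eqref{interlace} the eigenvalues $\lambda_j(\tilde W_{n-1})$ interlace those of $W_n$. The eigenvalue equation, via the Schur complement identity \eqref{snn}, shows that the eigenvalues of $W_n$ are precisely the solutions $\lambda$ of the secular equation
$$ \frac{1}{n} X_n^* (\tilde W_{n-1} - \lambda I)^{-1} X_n = \frac{1}{\sqrt n} \xi_{nn} - \lambda, $$
i.e. $f(\lambda) := \frac{1}{n}\sum_{j=1}^{n-1} \frac{|u_j(\tilde W_{n-1})^* X_n|^2}{\lambda_j(\tilde W_{n-1}) - \lambda} + \lambda - \frac{1}{\sqrt n}\xi_{nn}$ vanishes. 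Since $f$ is strictly increasing between consecutive poles $\lambda_j(\tilde W_{n-1})$, there is exactly one eigenvalue of $W_n$ in each such open interval. Two eigenvalues $\lambda_i(W_n), \lambda_{i+1}(W_n)$ lie within $n^{-1-c_0}$ of each other only if either (a) two consecutive poles $\lambda_j(\tilde W_{n-1}), \lambda_{j+1}(\tilde W_{n-1})$ are themselves within, say, $2 n^{-1-c_0}$ of each other (which reduces the gap problem for $W_n$ to one for $\tilde W_{n-1}$ — a matrix of the same type — allowing an inductive/iterative argument), or (b) there is a gap between consecutive poles of length $\gg n^{-1}$ that nonetheless contains an eigenvalue of $W_n$ crowded to within $n^{-1-c_0}$ of one endpoint. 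In case (b), I would use the explicit derivative $f'(\lambda) = \frac{1}{n}\sum_j \frac{|u_j(\tilde W_{n-1})^* X_n|^2}{(\lambda_j(\tilde W_{n-1}) - \lambda)^2} + 1$: if the eigenvalue $\lambda_*$ of $W_n$ is within $\delta = n^{-1-c_0}$ of a pole $\lambda_j(\tilde W_{n-1})$, then the single term $j$ forces $f$ to be of size $\gg |u_j(\tilde W_{n-1})^* X_n|^2 / (n\delta)$ nearby, and for $f(\lambda_*) = 0$ to be consistent with the boundedness (from the local semicircle law) of the remaining sum, one needs $|u_j(\tilde W_{n-1})^* X_n|^2 \ll \delta \cdot n^{o(1)} = n^{-c_0 + o(1)}$.

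The heart of the matter is then a small-ball (anticoncentration) estimate: conditioning on $\tilde W_{n-1}$, the coefficients of $X_n$ are jointly independent with mean zero and variance one, so $u_j(\tilde W_{n-1})^* X_n$ is a linear form in independent variables with $\ell^2$-normalised coefficient vector, and by eigenvector delocalisation (Corollary \ref{deloc}, available since we assume \condo) each coefficient is $O(n^{-1/2+\eps})$, so no single term dominates; standard anticoncentration (e.g. the Erdős--Littlewood--Offord / Berry--Esséen type bound, or Paley--Zygmund in the continuous case) then gives $\P(|u_j(\tilde W_{n-1})^* X_n|^2 \le n^{-c_0}) \ll n^{-c_0/2 + o(1)}$, and taking $c_1$ slightly below $c_0/2$ finishes this case after a union bound over the $n^{o(1)}$ relevant indices $j$ (only those near the energy $u$ matter, by rigidity-type bounds from the local semicircle law). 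For case (a) and the iteration, I would run the argument for a constant number of rounds of removing rows/columns, each round either producing the desired small-ball gain or pushing the pair of close eigenvalues down to a smaller minor; since the decay hypothesis \condo\ is inherited by minors, after $O(1)$ rounds one is forced into the favourable case. The edge case is handled exactly as in the proof of Corollary \ref{deloc}, using \eqref{arrange} and the fact that the right-hand side is bounded away from the relevant poles, giving quantitative lower bounds on $f'$ there.

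The main obstacle I anticipate is the anticoncentration step in the discrete (Bernoulli) setting: one must rule out that $u_j(\tilde W_{n-1})^* X_n$ concentrates near zero, and this requires knowing the eigenvector $u_j(\tilde W_{n-1})$ is not too close to a rare structured vector on which a $\pm 1$ linear form could be atypically small — but delocalisation from Corollary \ref{deloc} precisely says all coefficients are $O(n^{-1/2+\eps})$, which is enough input for the classical inverse Littlewood--Offord bounds to give the needed $n^{-c_1}$ decay. A secondary technical nuisance is making the conditioning rigorous: the event we condition on (delocalisation and local semicircle control for $\tilde W_{n-1}$) holds with overwhelming probability, so it contributes only an $O_A(n^{-A})$ error, but one must be slightly careful that the eigenvalue index $j$ of $\tilde W_{n-1}$ we end up caring about is determined before revealing $X_n$, which it is, since the interlacing pins it down in terms of $i$ alone. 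Assembling these pieces yields Theorem \ref{gap-special}.
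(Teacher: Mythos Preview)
Your proposal starts from the right place --- the secular equation \eqref{joan}, Cauchy interlacing, and an anticoncentration estimate for $|u_j(\tilde W_{n-1})^* X_n|$ --- and these are indeed the core ingredients of the paper's proof. But two of the steps you outline do not go through as written, and they are exactly the places where the real work lies.

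First, the case (b) deduction is not valid as stated. You claim that if $\lambda_* = \lambda_i(W_n)$ is crowded to within $\delta = n^{-1-c_0}$ of the pole $\lambda_i(\tilde W_{n-1})$ while the adjacent pole gap is $\gg n^{-1}$, then the $j=i$ term in the secular sum can only be balanced if $|u_i^* X_n|^2$ is anomalously small, because ``the remaining sum is bounded (from the local semicircle law)''. That remaining sum is \emph{not} $O(n^{o(1)})$: each nearby term $j' = i \pm 1, i \pm 2, \ldots$ is individually of size $\sim n$ in the $W_n$-scaled secular equation, and the full sum is only $O(1)$ because of near-cancellation between the $j' < i$ and $j' > i$ blocks. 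The large positive $j=i$ term can therefore be cancelled not by a small numerator but by one or several of the negative terms $j' < i$ becoming large, and crucially this balancing need not come from the single term $j' = i-1$ but from a dyadic block $i - 2^k \le j' < i - 2^{k-1}$. This is exactly the complication the paper flags, and it is why the argument in \cite{TVlocal1} replaces the naive gap $\lambda_{i+1}-\lambda_i$ by a \emph{regularized gap} (an infimum over many normalized gaps $\lambda_{i_+}-\lambda_{i_-}$) that is robust to this dyadic balancing.

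Second, and more seriously, the ``$O(1)$ rounds'' claim for case (a) cannot close. In your scheme, case (a) reduces the gap event for $W_n$ to the same type of gap event (two consecutive eigenvalues close) for the minor $\tilde W_{n-1}$. Iterating $O(1)$ times leaves you with the identical problem for $W_{n-O(1)}$, and you have gained nothing: an induction on $n$ along these lines does not close (the error terms from the anticoncentration branch accumulate additively). The paper's mechanism is quite different: from $\lambda_{i+1}(A_n) - \lambda_i(A_n)$ small one deduces (with high probability) that $\lambda_{i+1}(A_{n-1}) - \lambda_{i-1}(A_{n-1})$ is small --- a gap now spanning \emph{three} eigenvalues --- and more generally each step of the iteration widens the index range. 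After $k \sim \log^C n$ iterations one has $\sim \log^C n$ eigenvalues of $A_{n-k}$ packed into a short interval, and \emph{that} event is what the local semicircle law rules out. The careful bookkeeping of the failure probabilities along the way (including the occasional anomalously small numerator $|u_j^* X|^2$, which works \emph{against} you in the iteration since it allows the gap in the minor to be larger than in $A_n$) is where the exponential decay in Condition {\condo} is genuinely needed, and is the most delicate part of the proof in \cite{TVlocal1}.
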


We discuss this theorem (which is \cite[Theorem 19]{TVlocal1}) later in this section.  Another key ingredient is the following truncated version of the Four Moment Theorem, in which one removes the event that two consecutive eigenvalues are too close to each other.  For technical reasons, we need to introduce quantities
$$ Q_i(A_n) := \sum_{j \neq i} \frac{1}{|\lambda_j(A_n) - \lambda_i(A_n)|^2} $$
for $i=1,\ldots,n$, which is a regularised measure of extent to which $\lambda_i(A_n)$ is close to any other eigenvalue of $A_n$.

\begin{theorem}[Truncated Four Moment Theorem]\label{trunc}
Let $c_0 > 0$ be a sufficiently small constant.
 Let $M_n = (\xi_{ij})_{1 \leq i,j \leq n}$ and $M'_n = (\xi'_{ij})_{1 \leq i,j \leq n}$ be
 two Wigner matrices obeying Condition \condone for some sufficiently large absolute constant $C_0$. Assume furthermore that for any $1 \le  i<j \le n$, $\xi_{ij}$ and
 $\xi'_{ij}$  match to order $4$ 
  and for any $1 \le i \le n$, $\xi_{ii}$ and $\xi'_{ii}$ match  to order $2$.  Set $A_n := \sqrt{n} M_n$ and $A'_n := \sqrt{n} M'_n$, let $1 \leq k \leq n^{c_0}$ be an integer, and let 
$$ G = G(\lambda_{i_1},\ldots,\lambda_{i_k},Q_{i_1},\ldots,Q_{i_k})$$
be a smooth function from $\R^k \times \R^k_+$ to $\R$ that is supported in the region
\begin{equation}\label{qii}
 Q_{i_1},\ldots,Q_{i_k} \leq n^{c_0}
\end{equation}
and obeys the derivative bounds
\begin{equation}\label{geo}
 |\nabla^j G(\lambda_{i_1},\ldots,\lambda_{i_k},Q_{i_1},\ldots,Q_{i_k})| \leq n^{c_0}
\end{equation}
for all $0 \leq j \leq 5$.  Then
\begin{equation}\label{beg}
\begin{split}
& \E G( \lambda_{i_1}(A_n),\ldots,\lambda_{i_k}(A_n),Q_{i_1}(A_n),\ldots,Q_{i_k}(A_n)) =\\
&\quad 
  \E G( \lambda_{i_1}(A'_n),\ldots,\lambda_{i_k}(A'_n),Q_{i_1}(A'_n),\ldots,Q_{i_k}(A'_n)) + O(n^{-1/2+O(c_0)}.
\end{split}
 \end{equation}
\end{theorem}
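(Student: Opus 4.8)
The plan is to prove Theorem \ref{trunc} by the Lindeberg exchange strategy, following \cite{TVlocal1}, \cite{TVlocal3}: one reduces the comparison of $\E G(\dots)$ for $A_n$ and for $A'_n$ to a single-entry replacement estimate, and sums the resulting telescoping series. Enumerate the independent entries $\{\xi_{pq}\}_{1\le p\le q\le n}$ and replace them one at a time by the corresponding entries of $M'_n$, producing a chain of (generalised) Wigner matrices $M_n=M^{(0)},M^{(1)},\dots,M^{(N)}=M'_n$ with $N=\binom{n}{2}+n$. It then suffices to show that a single off-diagonal replacement $\xi_{pq}\rightsquigarrow\xi'_{pq}$ ($p<q$) alters $\E G(\dots)$ by $O(n^{-5/2+O(c_0)})$ and a single diagonal replacement by $O(n^{-3/2+O(c_0)})$; summing the $\sim n^2/2$ off-diagonal and $\sim n$ diagonal estimates then gives \eqref{beg}. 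Fix such a $(p,q)$, condition on all other entries, let $W(t)$ be $W_n$ with its $(p,q)$ and $(q,p)$ entries set to $t$ and $\bar t$, put $A(t):=nW(t)$ (so $A(\xi_{pq}/\sqrt n)=A_n$), and set $g(t):=G(\lambda_{i_1}(A(t)),\dots,\lambda_{i_k}(A(t)),Q_{i_1}(A(t)),\dots,Q_{i_k}(A(t)))$.

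\emph{Taylor expansion and moment matching.} Taylor-expand $g$ in $(\Re t,\Im t)$ about $t=0$ to total order $4$. On taking expectations over $\xi_{pq}/\sqrt n$ and over $\xi'_{pq}/\sqrt n$, the coefficient of $(\Re t)^a(\Im t)^b$ is multiplied by $n^{-(a+b)/2}\E(\Re\xi_{pq})^a(\Im\xi_{pq})^b$, respectively with $\xi'_{pq}$; since $\xi_{pq}$ and $\xi'_{pq}$ match moments to order $4$, all terms with $a+b\le 4$ cancel, leaving only the order-$5$ Taylor remainder, of size $O\bigl(\sup_{s}|\nabla^5 g(s)|\cdot|t|^5\bigr)$ (supremum over the segment from $0$ to $\xi_{pq}/\sqrt n$). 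By Condition \condone\ with $C_0$ large, $|\xi_{pq}|,|\xi'_{pq}|=O(n^{o(1)})$ with overwhelming probability, and the complementary event contributes negligibly since $\|G\|_\infty=O(n^{c_0})$ by \eqref{geo}; hence $|t|^5=O(n^{-5/2+o(1)})$ on the relevant event, and it remains to prove $\sup|\nabla^j g|=O(n^{O(c_0)})$ for $0\le j\le 5$ along the segment. The diagonal case is identical, with $t$ real, moment matching to order $2$, and an order-$3$ remainder $O(n^{-3/2+o(1)})$.

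\emph{The derivative bound.} By the chain rule together with \eqref{geo} and $k\le n^{c_0}$, the estimate $\sup|\nabla^j g|=O(n^{O(c_0)})$ reduces to $\bigl|\tfrac{d^j}{dt^j}\lambda_{i_a}(A(t))\bigr|,\bigl|\tfrac{d^j}{dt^j}Q_{i_a}(A(t))\bigr|=O(n^{O(c_0)})$ for $j\le 5$. These derivatives are supplied by the Hadamard / Rayleigh--Schr\"odinger variation formulae: each is a finite sum of products of entries $e_p^*R_{i_a}(t)^m e_q$ and $e_p^*R_{i_a}(t)^m u_{i_a}(A(t))$ (with their $p\leftrightarrow q$ variants, $m=O(1)$) of the reduced resolvent $R_{i_a}(t):=\bigl(A(t)-\lambda_{i_a}(A(t))\bigr)^{-1}\bigl(1-P_{i_a}(A(t))\bigr)$. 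On the support of $G$ one has $Q_{i_a}(A(t))\le n^{c_0}$ (after replacing the sharp cutoff \eqref{qii} by a smooth bump, which costs only a harmless $O(n^{-c_0})$ factor), and this forces the separation $\min_{j\ne i_a}|\lambda_j(A(t))-\lambda_{i_a}(A(t))|\gg n^{-c_0/2}$. Combining this separation (which controls the $O(n^{O(c_0)})$ eigenvalues nearest $\lambda_{i_a}$) with the local semicircle law (Theorem \ref{lsc}) and eigenvector delocalisation (Corollary \ref{deloc}), both valid under Condition \condone\ for $C_0$ large, one gets $|e_p^*u_{i_a}(A(t))|=O(n^{-1/2+O(c_0)})$ and $\sum_{j\ne i_a}|\lambda_j(A(t))-\lambda_{i_a}(A(t))|^{-1}=O(n^{O(c_0)})$, hence the required $O(n^{O(c_0)})$ bounds on the resolvent entries and thus on the eigenvalue and $Q$ derivatives.

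\emph{The main obstacle: uniformity along the segment.} These estimates are proved on the support of $G$, i.e.\ at the endpoints of the segment, whereas the Taylor remainder needs them for every $t\in[0,\xi_{pq}/\sqrt n]$. This gap is closed by a continuity (bootstrap) argument: conditioning on the frozen entries, the local semicircle law and delocalisation hold for the (generalised) Wigner matrix $A(0)$ with overwhelming probability; if $Q_{i_a}(A(0))>2n^{c_0}$ for some $a$, then since $|t|=O(n^{-1/2+o(1)})$ and, wherever $Q_{i_a}(A(t))\le 4n^{c_0}$, one has $\bigl|\tfrac{d}{dt}Q_{i_a}(A(t))\bigr|=O(n^{O(c_0)})$ (from the same resolvent estimates), $Q_{i_a}$ stays above $n^{c_0}$ on the whole segment, so $g\equiv0$ there and the swap contributes nothing; otherwise $Q_{i_a}(A(0))\le 2n^{c_0}$ for all $a$, and the same differential inequality together with the shortness of the segment keeps $Q_{i_a}(A(t))\le 4n^{c_0}$ --- and, through the separation bound it entails, keeps $\lambda_{i_a}(A(t))$ simple and $R_{i_a}(t),P_{i_a}(t),g$ genuinely smooth --- along the entire segment, so the Taylor estimate applies. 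The delicate points, and the true technical heart of the argument, are verifying that the local-semicircle and delocalisation inputs survive the rank-two deterministic perturbation $A(0)\rightsquigarrow A(t)$ of operator norm $O(n^{-1/2+o(1)})$, and that the constants in the bootstrap close cleanly; it is precisely to make this self-contained that the regularised quantities $Q_i$ (rather than the raw spectral gap) were built into the statement.
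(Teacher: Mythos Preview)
Your proposal is correct and follows essentially the same approach as the paper's proof sketch: Lindeberg swapping reduces to a single-entry estimate, which is obtained via Taylor expansion, the Hadamard variation formulae for $\lambda_i$ and $Q_i$, eigenvector delocalisation, the local semicircle law, and a continuity/bootstrap argument to propagate the $Q_i$ control along the interpolation segment. The only notable cosmetic difference is your choice of parameterisation (you put $t=\xi_{pq}/\sqrt{n}$ and let $A'(t)$ carry entries of size $n$, whereas the paper sets $t=\xi_{pq}$ with $A'(0)=e_pe_q^*+e_qe_p^*$), which simply shifts a factor of $n^{j/2}$ between the size of $|t|^j$ and the size of the $j$th derivative; your target bound $|\tfrac{d^j}{dt^j}\lambda_{i_a}|=O(n^{O(c_0)})$ is exactly the paper's $|\tfrac{d^j}{ds^j}\lambda_{i_a}|=O(n^{-j/2+O(c_0)})$ after this rescaling. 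One small slip: your parenthetical about ``replacing the sharp cutoff \eqref{qii} by a smooth bump'' is unnecessary, since $G$ is already assumed smooth and supported in that region --- the support hypothesis directly gives $Q_{i_a}\le n^{c_0}$ wherever $G\ne 0$.
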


We will discuss the proof of this theorem shortly.  Applying Theorem \ref{trunc} with $k=1$ and a function $G$ that depends only a single variable $Q_i$, and using the gap property to bound $Q_i$ (cf. \cite[Lemma 49]{TVlocal1}), one can show a four moment property for the gap theorem: if $M_n, M'_n$ are Wigner matrices obeying Condition \condone\ for a sufficiently large $C_0$ which match to fourth order, and $M_n$ obeys the gap property, then $M'_n$ also obeys the gap property.  Using this and Theorem \ref{gap-special}, one can then obtain Theorem \ref{gap} in full generality.  Using Theorem \ref{gap}, one can then deduce Theorem \ref{theorem:Four} from Theorem \ref{trunc} by smoothly truncating in the $Q$ variables: see \cite[\S 3.3]{TVlocal1}.

It remains to establish Theorem \ref{gap-special} and Theorem \ref{trunc}.  We begin with Theorem \ref{trunc}.  To simplify the exposition slightly, let us assume that the matrices $M_n, M'_n$ are real symmetric rather than Hermitian.  To reduce the number of parameters, we will also set $C_0 := 1/c_0$.

As indicated in Section \ref{swap-sec}, the basic idea is to use the Lindeberg exchange strategy.  To illustrate the idea, let $\tilde M_n$ be the matrix formed from $M_n$ by replacing a single entry $\xi_{pq}$ of $M_n$ with the corresponding entry $\xi'_{pq}$ of $M'_n$ for some $p<q$, with a similar swap also being performed at the $\xi_{qp}$ entry to keep $\tilde M_n$ Hermitian.  Strictly speaking, $\tilde M_n$ is not a Wigner matrix as defined in Definition \ref{def:Wignermatrix}, as the entries are no longer identically distributed, but this will not significantly affect the arguments.  (One also needs to perform swaps on the diagonal, but this can be handled in essentially the same manner.)

Set $\tilde A_n := \sqrt{n} \tilde M_n$ as usual.  
We will sketch the proof of the claim that
\begin{align*}
& \E G( \lambda_{i_1}(A_n),\ldots,\lambda_{i_k}(A_n),Q_{i_1}(A_n),\ldots,Q_{i_k}(A_n)) \\
&\quad = 
  \E G( \lambda_{i_1}(\tilde A_n),\ldots,\lambda_{i_k}(\tilde A_n),Q_{i_1}(\tilde A_n),\ldots,Q_{i_k}(\tilde A_n)) + O(n^{-5/2+O(c_0)};
  \end{align*}
by telescoping together $O(n^2)$ estimates of this sort one can establish \eqref{beg}.  (For swaps on the diagonal, one only needs an error term of $O(n^{-3/2+O(c_0)})$, since there are only $O(n)$ swaps to be made here rather than $O(n^2)$.  This is ultimately why there are two fewer moment conditions on the diagonal than off it.)

We can write $A_n = A(\xi_{pq})$, $\tilde A_n = A(\xi'_{pq})$, where 
$$
A(t) = A(0) + t A'(0)
$$
is a (random) Hermitian matrix depending linearly\footnote{If we were working with Hermitian matrices rather than real symmetric matrices, then one could either swap the real and imaginary parts of the $\xi_{ij}$ separately (exploiting the hypotheses that these parts were independent), or else repeat the above analysis with $t$ now being a complex parameter (or equivalently, two real parameters) rather than a real one.  In the latter case, one needs to replace all instances of single variable calculus below (such as Taylor expansion) with double variable calculus, but aside from notational difficulties, it is a routine matter to perform this modification.}  on a real parameter $t$, with $A(0)$ being a Wigner matrix with one entry (and its adjoint) zeroed out, and $A'(0)$ is the explicit elementary Hermitian matrix 
\begin{equation}\label{apo}
A'(0) = e_p e_q^* + e_p^* e_q.
\end{equation}
We note the crucial fact that the random matrix $A(0)$ is independent of both $\xi_{pq}$ and $\xi'_{pq}$.  Note from Condition \condone\ that we expect $\xi_{pq}, \xi'_{pq}$ to have size $O(n^{O(c_0)})$ most of the time, so we should (heuristically at least) be able to restrict attention to the regime $t = O(n^{O(c_0)})$.  If we then set
\begin{equation}\label{ftt}
 F(t) := \E G( \lambda_{i_1}(A(t)),\ldots,\lambda_{i_k}(A(t)),Q_{i_1}(A(t)),\ldots,Q_{i_k}(A(t))) 
\end{equation}
then our task is to show that
\begin{equation}\label{fxij}
 \E F(\xi_{pq}) = \E F(\xi'_{pq}) + O(n^{-5/2+O(c_0)}).
\end{equation}

Suppose that we have Taylor expansions of the form
\begin{equation}\label{lait}
 \lambda_{i_l}(A(t)) = \lambda_{i_l}(A(0)) + \sum_{j=1}^4 c_{l,j} t^j + O( n^{-5/2+O(c_0)} )
\end{equation}
for all $t = O(n^{O(c_0)})$ and $l=1,\ldots,k$, where the Taylor coefficients $c_{l,j}$ have size $c_{l,j} = O(n^{-j/2+O(c_0)}$, and similarly for the quantities $Q_{i_l}(A(t))$.  Then by using the hypothesis \eqref{geo} and further Taylor expansion, we can obtain a Taylor expansion
$$
F(t) = F(0) + \sum_{j=1}^4 f_j t^j + O( n^{-5/2+O(c_0)} )
$$
for the function $F(t)$ defined in \eqref{ftt}, where the Taylor coefficients $f_j$ have size $f_j = O(n^{-j/2+O(c_0)})$.  Setting $t$ equal to $\xi_{pq}$ and taking expectations, and noting that the Taylor coefficients $f_j$ depend only on $F$ and $A(0)$ and is thus independent of $\xi_{ij}$, we conclude that
$$ \E F(\xi_{pq}) = \E F(0) + \sum_{j=1}^4 (\E f_j) (\E \xi_{pq}^j) + O( n^{-5/2+O(c_0)} )$$
and similarly for $\E F(\xi'_{pq})$.  If $\xi_{pq}$ and $\xi'_{pq}$ have matching moments to fourth order, this gives \eqref{fxij}.  (Note that a similar argument also would give the Three Moment Theorem, as well as Proposition \ref{approx-moment}.)

It remains to establish \eqref{lait} (as well as the analogue for $Q_{i_l}(A(t))$, which turns out to be analogous).  We abbreviate $i_l$ simply as $i$.  By Taylor's theorem with remainder, it would suffice to show that 
\begin{equation}\label{lat}
 \frac{d^j}{dt^j} \lambda_i(A(t)) = O( n^{-j/2+O(c_0)} )
\end{equation}
for $j=1,\ldots,5$. As it turns out, this is not quite true as stated, but it becomes true (with overwhelming probability\footnote{Technically, each value of $t$ has a different exceptional event of very small probability for which the estimates fail.  Since there are uncountably many values of $t$, this could potentially cause a problem when applying the union bound.  In practice, though, it turns out that one can restrict $t$ to a discrete set, such as the multiples of $n^{-100}$, in which case the union bound can be applied without difficulty.  See \cite{TVlocal1} for details.}) if one can assume that $Q_i(A(t))$ is bounded by $n^{O(c_0)}$.  In principle, one can reduce to this case due to the restriction \eqref{qii} on the support of $G$, although there is a technical issue because one will need to establish the bounds \eqref{lat} for values of $t$ other than $\xi_{pq}$ or $\tilde \xi_{pq}$.  This difficulty can be overcome by a continuity argument; see \cite{TVlocal1}.  For the purposes of this informal discussion, we shall ignore this issue and simply assume that we may restrict to the case where
\begin{equation}\label{qiat}
Q_i(A(t)) \ll n^{O(c_0)}.
\end{equation}
In particular, the eigenvalue $\lambda_i(A(t))$ is simple, which ensures that all quantities depend smoothly on $t$ (locally, at least).

To prove \eqref{lat}, one can use the classical \emph{Hadamard variation formulae} for the derivatives of $\lambda_i(A(t))$, which can be derived for instance by repeatedly differentiating the eigenvector equation $A(t) u_i(A(t)) = \lambda_i(A(t)) u_i(A(t))$.  The formula for the first derivative is
$$ \frac{d}{dt} \lambda_i(A(t)) = u_i(A(t))^* A'(0) u_i(A(t)).$$
But recall from eigenvalue delocalisation (Corollary \ref{deloc}) that with overwhelming probability, all coefficients of $u_i(A(t))$ have size $O(n^{-1/2+o(1)})$; given the nature
of the matrix \eqref{apo}, we can then obtain \eqref{lat} in the $j=1$ case.

Now consider the $j=2$ case.  The second derivative formula reads
$$ \frac{d^2}{dt^2} \lambda_i(A(t)) = - 2 \sum_{j \neq i} \frac{|u_i(A(t))^* A'(0) u_j(A(t))|^2}{\lambda_j(A(t)) - \lambda_i(A(t))}$$
(compare with the formula \eqref{dbm} for Dyson Brownian motion).  Using eigenvalue delocalisation as before, we see with overwhelming probability that the numerator is $O(n^{-1+o(1)})$.  To deal with the denominator, one has to exploit the hypothesis \eqref{qiat} and the local semicircle law (Theorem \ref{lsc}).  Using these tools, one can conclude \eqref{lat} in the $j=2$ case with overwhelming probability.

It turns out that one can continue this process for higher values of $j$, although the formulae for the derivatives for $\lambda_i(A(t))$ (and related quantities, such as $P_i(A(t))$ and $Q_i(A(t))$) become increasingly complicated, being given by a certain recursive formula in $j$.  See \cite{TVlocal1} for details.

Now we briefly discuss the proof\footnote{The argument here is taken from \cite{TVlocal1}.  In the case when the atom distributions are sufficiently smooth, one can also deduce this result from the level repulsion argument in \cite[Theorem 3.5]{ESY3} and the eigenvalue rigidity estimate \eqref{eigenrigid}, and by using Theorem \ref{trunc} one can extend the gap property to several other Wigner ensembles.  However, this argument does not cover the case of Bernoulli ensembles, which is perhaps the most difficult case of Theorem \ref{gap} or Theorem \ref{gap-special}.} of Theorem \ref{gap-special}.  For sake of discussion we restrict attention to the bulk case $\eps n \leq i \leq (1-\eps) n$; the changes needed to deal with the edge case are relatively minor and are discussed in \cite{TVlocal2}.  The objective here is to limit the probability of the event that the quantity $\lambda_{i+1}(A_n) - \lambda_i(A_n)$ is unexpectedly small.  The main difficulty here is the fact that one is comparing two adjacent eigenvalues.  If instead one was bounding $\lambda_{i+k}(A_n)-\lambda_i(A_n)$ for a larger value of $k$, say $k \geq \log^C n$ for a large value of $C$, then one could obtain such a bound from the local semicircle law (Theorem \ref{lsc}) without much difficulty.  To reduce $k$ all the way down to $1$, the idea is to exploit the following phenomenon:

\centerline{\emph{If $\lambda_{i+1}(A_n) - \lambda_i(A_n)$ is small, then $\lambda_{i+1}(A_{n-1}) - \lambda_{i-1}(A_{n-1})$ is also likely to be small.}}

Here $A_{n-1}$ denotes\footnote{Strictly speaking, one has to multiply $A_{n-1}$ also by $\frac{\sqrt{n-1}}{\sqrt{n}}$ to be consistent with our conventions for $M_n$ and $M_{n-1}$, although this factor turns out to make very little difference to the analysis.} the top left $n-1 \times n-1$ minor of $A_n$.  This phenomenon can be viewed as a sort of converse to the classical \emph{Cauchy interlacing law}
\begin{equation}\label{inter}
 \lambda_{i-1}(A_{n-1}) \leq \lambda_i(A_n) \leq \lambda_i(A_{n-1}) \leq \lambda_{i+1}(A_n) \leq \lambda_{i+1}(A_{n-1})
\end{equation}
(cf. \eqref{interlace}), since this law clearly shows that $\lambda_{i+1}(A_n) - \lambda_i(A_n)$ will be small whenever $\lambda_{i+1}(A_{n-1}) - \lambda_{i-1}(A_{n-1})$ is.  In principle, if one iterates (generalisations of) the above principle $k=\log^C n$ times, one eventually reaches an event that can be shown to be highly unlikely by the local semicircle law.

To explain why we expect such a phenomenon to be true, let us expand $A_n$ as
$$ A_n = \begin{pmatrix} A_{n-1} & X \\ X^* & \sqrt{n} \xi_{nn} \end{pmatrix}$$
where $X \in \C^{n-1}$ is the random vector with entries $\sqrt{n} \xi_{nj}$ for $j=1,\ldots,n-1$.  By expanding out the eigenvalue equation $A_n u_i(A_n) = \lambda_i(A_n) u_i(A_n)$, one eventually obtains the formula
\begin{equation}\label{joan}
 \sum_{j=1}^{n-1} \frac{|u_j(A_{n-1})^* X|^2}{\lambda_j(A_{n-1}) - \lambda_i(A_n)} = \sqrt{n} \xi_{nn} - \lambda_i(A_n)
\end{equation}
that relates $\lambda_i(A_n)$ to the various eigenvalues $\lambda_j(A_{n-1})$ of $A_{n-1}$ (ignoring for sake of discussion the non-generic case when one or more of the denominators in \eqref{joan} vanish); compare with \eqref{arrange}.  Using concentration of measure tools (such as Talagrand's inequality, see e.g. \cite{ledoux}), one expects $|u_j(A_{n-1})^* X|^2$ to concentrate around its mean, which can be computed to be $n(n-1)$.  In view of this and, one expects the largest (and thus, presumably, the most dominant) terms in \eqref{joan} to be the summands on the left-hand side when $j$ is equal to either $i-1$ or $i$.  In particular, if $\lambda_{i+1}(A_n)-\lambda_i(A_n)$ is unexpectedly small (e.g. smaller than $n^{-c_0}$), then by \eqref{inter} $\lambda_i(A_{n-1})-\lambda_i(A_n)$ is also small.  This causes the $j=i$ summand in \eqref{joan} to (usually) be large and positive; to counterbalance this, one then typically expects the $j=i-1$ summand to be large and negative, so that $\lambda_i(A_n)-\lambda_{i-1}(A_{n-1})$ is small; in particular, $\lambda_i(A_{n-1})-\lambda_{i-1}(A_{n-1})$ is small.  A similar heuristic argument (based on \eqref{joan} but with $\lambda_i(A_n)$ replaced by $\lambda_{i+1}(A_n)$ predicts that $\lambda_{i+1}(A_{n-1})-\lambda_i(A_{n-1})$ is also small; summing, we conclude that $\lambda_{i+1}(A_{n-1})-\lambda_{i-1}(A_{n-1})$ is also small, thus giving heuristic support to the above phenomenon.

One can make the above arguments more rigorous, but the details are rather complicated.  One of the complications arises from the slow decay of the term $\frac{1}{\lambda_j(A_{n-1}) -\lambda_i(A_n)}$ as $i$ moves away from $j$.  Because of this, a large positive term (such as the $j=i$ summand) in \eqref{joan} need not be balanced primarily by the negative $j=i-1$ summand, but instead by a dyadic block $i-2^k \leq j < i-2^{k-1}$ of such summands; but this can be addressed by replacing the gap $\lambda_{i+1}(A_n)-\lambda_i(A_n)$ by a more complicated quantity (called the \emph{regularized gap} in \cite{TVlocal1}) that is an infimum of a moderately large number of (normalised) gaps $\lambda_{i_+}(A_n) - \lambda_{i_-}(A_n)$.  A more serious issue is that the numerators $|u_j(A_{n-1})^* X|$ can sometimes be much smaller than their expected value of $\sim n$, which can cause the gap at $A_{n-1}$ to be significantly larger than that at $A_n$.  By carefully counting all the possible cases and estimating all the error probabilities, one can still keep the net error of this situation to be of the form $O(n^{-c})$ for some $c>0$.  It is in this delicate analysis that one must rely rather heavily on the exponential decay hypothesis in Condition \condo, as opposed to the polynomial decay hypothesis in Condition \condone.

This concludes the sketch of Theorem \ref{gap-special}.  We remarked earlier that the extension to the edge case is fairly routine.  In part, this is because the expected eigenvalue gap $\lambda_{i+1}(A_n)-\lambda_i(A_n)$ becomes much wider at the edge (as large as $n^{1/3}$, for instance, when $i=1$ or $i=n-1$), and so Theorem \ref{gap-special} and Theorem \ref{gap} becomes a weaker statement.   There is however an interesting ``bias'' phenomenon that is worth pointing out at the edge, for instance with regard with the interlacing
\begin{equation}\label{lacing}
 \lambda_{n-1}(A_n) \leq \lambda_{n-1}(A_{n-1}) \leq \lambda_n(A_n)
\end{equation}
of the very largest eigenvalues.  On the one hand, the gap $\lambda_n(A_n)-\lambda_{n-1}(A_n)$ between the top two eigenvalues of $A_n$ is expected (and known, in many cases) to be comparable to $n^{1/3}$ on the average; see \eqref{bogan} below.  On the other hand, from the semi-circular law one expects $\lambda_n(A_n)$ to grow like $2n$, which suggests that $\lambda_n(A_n)-\lambda_{n-1}(A_{n-1})$ should be comparable to $1$, rather than to $n^{1/3}$.  In other words, the interlacing \eqref{lacing} is \emph{biased}; the intermediate quantity $\lambda_{n-1}(A_{n-1})$ should be far closer to the right-most quantity $\lambda_n(A_n)$ than the left-most quantity $\lambda_{n-1}(A_n)$.  This bias can in fact be demonstrated by using the fundamental equation \eqref{joan}; the point is that in the edge case (when $i$ is close to $n$) the term $-\lambda_i(A_n)$ on the right-hand side plays a major role, and has to be balanced by $\lambda_i(A_n)-\lambda_i(A_{n-1})$ being as small as $O(1)$.  

This bias phenomenon is not purely of academic interest; it turns out to be an essential ingredient in the proof of eigenvalue delocalisation (Corollary \ref{deloc}) in the edge case, as discussed in Section \ref{semi-sec}. See \cite{TVlocal2} for more discussion.  It would be of interest to understand the precise relationship between the various eigenvalues in \eqref{inter} or \eqref{lacing}; the asymptotic joint distribution for, say, $\lambda_i(A_n)$ and $\lambda_i(A_{n-1})$ is currently not known, even in the GUE case.

\section{Applications}  

By combining the heat flow methods with swapping tools such as the Four Moment Theorem, one can extend a variety of results from the GUE (or gauss divisible) regime to wider classes of Wigner ensembles.  We now give some examples of such extensions.
 
\subsection{Distribution of individual eigenvalues}\label{dist-sec}

One of the simplest instances of the method arises when extending the central limit theorem \eqref{gustav} of Gustavsson \cite{Gus} for eigenvalues $\lambda_i(A_n)$ in the bulk from GUE to more general ensembles:

\begin{theorem}\label{gust}  The gaussian fluctuation law \eqref{gustav} continues to hold for Wigner matrices obeying Condition \condone\ for a sufficiently large $C_0$, and whose atom distributions match that of GUE to second order on the diagonal and fourth order off the diagonal; thus, one has
$$
 \frac{\lambda_i(A_n) - \lambda_i^\cl(A_n)}{\sqrt{\log n/2\pi} / \rho_\sc(u)} \to N(0,1)_\R
$$ 
whenever $\lambda_i^\cl(A_n)=n(u+o(1))$ for some fixed $-2 < u < 2$.
\end{theorem}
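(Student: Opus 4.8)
The plan is to deduce Theorem~\ref{gust} from Gustavsson's central limit theorem \eqref{gustav} for GUE by transporting the latter to the general ensemble via the Four Moment Theorem (Theorem~\ref{theorem:Four}) specialised to $k=1$. Let $M'_n$ be a GUE matrix and $A'_n := \sqrt{n} M'_n$. By hypothesis the atom distributions of $M_n$ match those of $M'_n$ to second order on the diagonal and to fourth order off it, and both ensembles obey Condition~\condone\ (GUE in fact obeys \condo), so Theorem~\ref{theorem:Four} applies to the pair $(M_n,M'_n)$. Fixing $t\in\R$ and setting
\[
 E_0 = E_0(n) := \lambda_i^\cl(A_n) + t\,\frac{\sqrt{\log n/2\pi}}{\rho_\sc(u)},
\]
the assertion of the theorem is equivalent to $\P(\lambda_i(A_n)\le E_0)\to\Phi(t)$ for every $t$, where $\Phi$ is the standard normal distribution function, and where we note that $\lambda_i^\cl(A_n)$ is a deterministic quantity, the same for $A_n$ and $A'_n$.

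To make contact with the smooth-test-function formulation of Theorem~\ref{theorem:Four}, I would smooth the indicator $\mathbf{1}_{(-\infty,E_0]}$ at a small scale $\delta := n^{-c_0/10}$: pick smooth $G_\pm:\R\to[0,1]$ with $\mathbf{1}_{(-\infty,E_0-\delta]}\le G_-\le \mathbf{1}_{(-\infty,E_0]}\le G_+\le \mathbf{1}_{(-\infty,E_0+\delta]}$ and $|G_\pm^{(j)}|\le C_j\delta^{-j}$ for $0\le j\le 5$, so that for $n$ large these obey the derivative bound \eqref{G-deriv}. Applying Theorem~\ref{theorem:Four} with $k=1$ to $G_-$ and to $G_+$ then sandwiches
\[
 \P(\lambda_i(A'_n)\le E_0-\delta) - n^{-c_0}\;\le\;\P(\lambda_i(A_n)\le E_0)\;\le\;\P(\lambda_i(A'_n)\le E_0+\delta) + n^{-c_0}.
\]
Since $\P(\lambda_i(A'_n)\le E_0\pm\delta)=\P\!\big(\rho_\sc(u)(\lambda_i(A'_n)-\lambda_i^\cl(A_n))/\sqrt{\log n/2\pi}\le t\pm \delta\rho_\sc(u)/\sqrt{\log n/2\pi}\big)$ and the shift $\delta\rho_\sc(u)/\sqrt{\log n/2\pi}$ tends to $0$, I would invoke \eqref{gustav} for GUE (a routine sandwiching turns weak convergence to the continuous law $\Phi$ into convergence at the slowly moving thresholds $t\pm o(1)$) to get $\P(\lambda_i(A'_n)\le E_0\pm\delta)\to\Phi(t)$; combined with the displayed inequalities and $n^{-c_0}\to0$ this yields $\P(\lambda_i(A_n)\le E_0)\to\Phi(t)$, as required.

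The only point that needs genuine care — the substance being already contained in Theorem~\ref{theorem:Four} and in \eqref{gustav} — is the choice of the smoothing scale $\delta$: it must be at least $n^{-c_0/5}$ so that the fifth derivative of $G_\pm$ still satisfies the bound $n^{c_0}$ in \eqref{G-deriv}, yet it must be negligible against the scale $\sqrt{\log n}$ on which $\lambda_i(A_n)$ genuinely fluctuates; these constraints are compatible precisely because a polynomially small scale beats a logarithmically large one, so no input beyond what \eqref{gustav} already encodes (in particular, no sharper eigenvalue rigidity) is required. The same argument, run with a smooth bump in place of $G_\pm$, also shows that the median of $\lambda_i(A_n)$ lies within $O(n^{-c_0})$ of the median of the corresponding GUE eigenvalue, matching the discussion in the footnote to Theorem~\ref{theorem:nec}.
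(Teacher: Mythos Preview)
Your proposal is correct and follows essentially the same approach as the paper: smooth the indicator at scale $n^{-c_0/10}$, apply Theorem~\ref{theorem:Four} with $k=1$ to sandwich the distribution function of $\lambda_i(A_n)$ between shifted distribution functions of $\lambda_i(A'_n)$, and conclude from Gustavsson's theorem for GUE. The only cosmetic difference is that the paper phrases the sandwich for two-sided intervals $I=[a,b]$ rather than half-lines $(-\infty,E_0]$, but the argument is identical.
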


\begin{proof}  Let $M'_n$ be drawn from GUE, thus by \eqref{gustav} one already has
$$
 \frac{\lambda_i(A'_n) - \lambda_i^\cl(A_n)}{\sqrt{\log n/2\pi} / \rho_\sc(u)} \to N(0,1)_\R
$$ 
(note that $\lambda_i^\cl(A_n) = \lambda_i^\cl(A'_n)$.  To conclude the analogous claim for $A_n$, it suffices to show that
\begin{equation}\label{lama}
\P( \lambda_i( A'_n ) \in I_- ) - n^{-c_0} \leq
\P( \lambda_i( A_n ) \in I ) \leq \P( \lambda_i( A'_n ) \in I_+ ) + n^{-c_0}
\end{equation}
for all intervals $I = [a,b]$, and $n$ sufficiently large, where $I_+ := [a-n^{-c_0/10}, b+n^{-c_0/10}]$ and $I_- := [a+n^{-c_0/10},b-n^{-c_0/10}]$.

We will just prove the second inequality in \eqref{lama}, as the first is very similar.
We define a smooth bump function $G:\R \to \R^+$ equal to one on $I_-$ and vanishing outside of $I_+$.  Then we have
$$ \P( \lambda_i( A_n ) \in I ) \leq \E G( \lambda_i(A_n) ) $$
and
$$ \E G( \lambda_i(A'_n) ) \leq \P( \lambda_i( A'_n ) \in I )$$
On the other hand, one can choose $G$ to obey \eqref{G-deriv}.  Thus by Theorem \ref{theorem:Four} we have
$$ |\E G( \lambda_i(A_n) ) - \E G( \lambda_i(A'_n) )| \leq n^{-c_0}$$
and the second inequality in \eqref{lama} follows from the triangle inequality.  The first inequality is similarly proven using a smooth function that equals $1$ on $I_-$ and vanishes outside of $I$.
\end{proof}

\begin{remark}  In \cite{Gus} the asymptotic joint distribution of $k$ distinct eigenvalues $\lambda_{i_1}(M_n),\ldots,\lambda_{i_k}(M_n)$ in the bulk of a GUE matrix $M_n$ was computed (it is a gaussian $k$-tuple with an explicit covariance matrix).  By using the above argument, one can extend that asymptotic for any fixed $k$ to other Wigner matrices, so long as they match GUE to fourth order off the diagonal and to second order on the diagonal.

If one could extend the results in \cite{Gus} to broader ensembles of matrices, such as gauss divisible matrices, then the above argument would allow some of the moment matching hypotheses to be dropped, using tools such as Lemma \ref{match}.
\end{remark}

\begin{remark} Recently in \cite{Doering}, a moderate deviations property of the distribution of the eigenvalues $\lambda_i(A_n)$ was established first for GUE, and then extended to the same class of matrices considered in Theorem \ref{gust} by using the Four Moment Theorem.  An analogue of Theorem \ref{gust} for real symmetric matrices (using GOE instead of GUE) was established in \cite{rourke}.
\end{remark}

A similar argument to the one given in Theorem \ref{gust} also applies at the edge of the spectrum.  For sake of discussion we shall just discuss the distribution of the largest eigenvalue $\lambda_n(A_n)$.  In the case of a GUE ensemble, this largest eigenvalue is famously governed by the \emph{Tracy-Widom law} \cite{TW, TWbook}, which asserts that
\begin{equation}\label{bogan}
 \P( \frac{\lambda_n(A_n)-2n}{n^{1/3}} \leq t ) \to \det(1-T_{[t,+\infty)})
\end{equation}
for any fixed $t \in \R$, where $T_{[t,+\infty)}: L^2([t,+\infty)) \to L^2([t,+\infty))$ is the integral operator
$$ T_{[t,+\infty)} f(x) := \int_t^{+\infty} \frac{\Ai(x) \Ai'(y) - \Ai'(x) \Ai(y)}{x-y} f(y)\ dy$$
and $\Ai:\R \to \R$ is the Airy function
$$ \Ai(x) := \frac{1}{\pi} \int_0^\infty \cos(\frac{t^3}{3}+xt)\ dt.$$
Interestingly, the limiting distribution in \eqref{bogan} also occurs in many other seemingly unrelated contexts, such as the longest increasing subsequence in a random permutation \cite{BDJ, TWbook}.

It is conjectured that the Tracy-Widom law in fact holds for all Wigner matrices obeying Condition {\condone} with $C_0=4$; this value of $C_0$ is optimal, as one does not expect $\lambda_1(A_n)$ to stay near $2\sqrt{n}$ without this hypothesis (see \cite{bai-yin}).  While this conjecture is not yet fully resolved, there has now been a substantial amount of partial progress on the problem \cite{Sos1,  Ruz, Khor, TVlocal2, Joh2, EYY2}.
Soshnikov \cite{Sos1} was the first to obtain the Tracy-Widom law for a large class of Wigner matrices; thanks to subsequent refinements in \cite{Ruz, Khor}, we know that \eqref{bogan} holds for all Wigner matrices whose entries are iid with symmetric distribution and obeying Condition {\condone} with $C_0=12$.  On the other hand, by using the argument used to prove Theorem \ref{gust}, one can also obtain the asymptotic \eqref{bogan} for Wigner matrices obeying Condition \condone\ for a sufficiently large $C_0$, provided that the entries match that of GUE to fourth order.  Actually, since the asymptotic \eqref{bogan} applies at scale $n^{1/3}$ rather than at scale $1$, it is possible to modify the arguments to reduce the amount of moment matching required, that one only needs the entries to match GUE to third order (which in particular subsumes the case when the distribution is symmetric); see Appendix \ref{erratum}.  

More recently, Johansson \cite{Joh1} established \eqref{bogan} for gauss divisible Wigner ensembles obeying Condition \condone with the optimal decay condition $C_0=4$.  Combining this result with the three moment theorem (and noting that any Wigner matrix can be matched up to order three with a gauss divisible matrix, see Lemma \ref{match} below), one can then obtain \eqref{bogan} for any Wigner matrix obeying Condition \condone for sufficiently large $C_0$.  An independent proof of this claim (which also applied to generalized Wigner matrix models in which the variance of the entries was non-constant) was also established in \cite{EYY2}.  Finally, it was shown very recently in \cite{knowles} that there is a version of the Four Moment Theorem for the edge that only requires two matching moments, which allows one to establish the Tracy-Widom law for all Wigner matrices obeying Condition \condo.

\subsection{Universality of the sine kernel  for Hermitian Wigner matrices}

We now turn to the question of the extent to which the asymptotic \eqref{k-asym}, which asserts that the normalised $k$-point correlation functions $\rho_{n,u}^{(k)}$ converge to the universal limit $\rho^{(k)}_\Dyson$, can be extended to more general Wigner ensembles.  A long-standing conjecture of Wigner, Dyson, and Mehta (see e.g. \cite{Meh}) asserts (informally speaking) that \eqref{k-asym} is valid for all fixed $k$, all Wigner matrices and all fixed energy levels $-2 < u < 2$ in the bulk. 
(They also make the same conjecture for random symmetric and random symplectic matrices.)  However, to make this conjecture precise one has to specify the nature of convergence in \eqref{k-asym}.  For GUE, the convergence is quite strong (in the local uniform sense), but one cannot expect such strong convergence in general, particularly in the case of discrete ensembles in which $\rho_{n,u}^{(k)}$ is a discrete probability distribution (i.e. a linear combination of Dirac masses) and thus is unable to converge uniformly or pointwise to the continuous limiting distribution $\rho^{(k)}_\Dyson$.  We will thus instead settle for the weaker notion of \emph{vague convergence}.  More precisely, we say that \eqref{k-asym} holds in the vague sense if one has
\begin{equation}\label{test}
\lim_{n \to \infty} \int_{\R^k} F(x_1,\ldots,x_k) \rho^{(k)}_{n,u}(x_1,\ldots,x_k)\ dx_1 \ldots dx_k = \int_{\R^k} F(x_1,\ldots,x_k) \rho^{(k)}_{\Dyson}(x_1,\ldots,x_k)\ dx_1 \ldots dx_k
 \end{equation} P
for all continuous, compactly supported functions $F: \R^k \to \R$.  By the Stone-Weierstrass theorem we may take $F$ to be a test function (i.e. smooth and compactly supported) without loss of generality.

\begin{remark}  Vague convergence is not the only notion of convergence studied in the literature.  Another commonly studied notion of convergence is \emph{averaged vague convergence}, in which one averages over the energy parameter $u$ as well, thus replacing \eqref{test} with the weaker claim that
\begin{equation}\label{avg}
\begin{split}
\lim_{b\to 0}\lim_{n \to \infty} &\frac{1}{2b} \int_{E-b}^{E+b} \int_{\R^k} F(x_1,\ldots,x_k) \rho^{(k)}_{n,u}(x_1,\ldots,x_k)\ dx_1 \ldots dx_k du \\
&\quad = \int_{\R^k} F(x_1,\ldots,x_k) \rho^{(k)}_{\Dyson}(x_1,\ldots,x_k)\ dx_1 \ldots dx_k
\end{split}
\end{equation}
for all $-2 < E < 2$.  It can be argued (as was done in \cite{EY}) that as the original conjectures of Wigner, Dyson, and Mehta did not precisely specify the nature of convergence, that any one of these notions of convergence would be equally valid for the purposes of claiming a proof of the conjecture.  However, the distinction between such notions is not purely a technical one, as certain applications of the Wigner-Dyson-Mehta conjecture are only available if the convergence notion is strong enough.  

Consider for instance the {\it gap} problem of determining the probability that there is no eigenvalue in the interval $[-t/2n,t/2n]$
(in other words the distribution of the least singular value). This is an important problem 
which in the GUE case was studied in \cite{JMM} and discussed in length in a number of important books in the field, including Mehta's  (see \cite[Chapter 5]{Meh}), Deift's
(see \cite[Section 5.4]{Deibook}), Deift-Gioev's (see \cite[Section 4.2]{DG})
and Anderson-Guionnet-Zeitouni (see \cite[Section 3.5]{AGZ}), Forrester 's (see \cite[Section 9.6]{For}) and the Oxford handbook of matrix theory
edited by Akemann et. al. (see \cite[Section 4.6]{Hand} by Anderson).  This distribution can be determined from the correlation functions $\rho^{(k)}_{n,0}$ at the energy level $u=0$ by a standard combinatorial argument.  In particular, if one has the Wigner-Dyson-Mehta conjecture in the sense of vague convergence, one can make the limiting law in \cite{JMM} universal over the class of matrices for which that conjecture is verified; see Corollary \ref{lsv-gue} below.  However, if one only knows the Wigner-Dyson-Mehta conjecture in the sense of averaged vague convergence \eqref{avg} instead of vague convergence \eqref{test}, one cannot make this conclusion, because the distributional information at $u=0$ is lost in the averaging process.  See also Theorem \ref{ni-asym} for a further example of a statistic which can be controlled by the vague convergence version of the Wigner-Dyson-Mehta conjecture, but {\it not} the averaged vague convergence version. 
 
  For this reason, it is our opinion that a solution of Wigner-Dyson-Mehta conjecture in the averaged vague convergence sense should be viewed as an important partial resolution to that conjecture, but one that falls short of a complete solution to that conjecture\footnote{In particular, we view this conjecture as only partially resolved in the real symmetric and symplectic cases, as opposed to the Hermitian cases, because the available results in that setting are either only in the averaged vague convergence sense, or require some additional moment matching hypotheses on the coefficients.  Completing the proof of the Wigner-Dyson-Mehta conjecture in these categories remains an interesting future direction of research.}.
\end{remark}

As a consequence of the heat flow and swapping techniques, the Wigner-Dyson-Mehta conjecture for (Hermitian) Wigner matrices
 is largely resolved in the vague convergence category:

\begin{theorem}\label{vague-o}   Let $M_n$ be a Wigner matrix obeying Condition \condone\ for a sufficiently large absolute constant $C_0$ which matches moments with GUE to second order, and let $-2 < u < 2$ and $k \geq 1$ be fixed.  Then \eqref{k-asym} holds in the vague sense.
\end{theorem}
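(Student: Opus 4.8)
The plan is to combine the two complementary techniques surveyed in Sections~\ref{heatflow} and~\ref{swap-sec}: the heat flow results, which handle gauss divisible ensembles down to a very short time parameter, together with the (approximate) Four Moment Theorem, which transfers the conclusion to an arbitrary Wigner matrix. Fix $-2<u<2$, $k\ge 1$, and a test function $F:\R^k\to\R$; by symmetrising $F$ and invoking the Stone--Weierstrass reduction noted before the theorem, I may assume $F$ is symmetric and smooth, and the goal is the identity \eqref{test}.

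\emph{Step 1: reduction to a gauss divisible approximant.} I would fix a small $\eps>0$ (eventually much smaller than the constant $c_0$ of Theorem~\ref{theorem:Four}), set $t:=n^{-1+\eps}$, and, using Lemma~\ref{match} (or a direct computation: convolving with $(1-e^{-t})^{1/2}$ times a Gaussian perturbs the third and fourth moments of an atom variable by only $O(t)$, while the first two moments can be fixed exactly by rescaling the variance of the initial atom), choose a Wigner matrix $M_n^0$ with all atom moments bounded such that the gauss divisible matrix $\hat M_n:=e^{-t/2}M_n^0+(1-e^{-t})^{1/2}G_n$ (with $G_n$ a GUE matrix) has atom distributions matching those of $M_n$ exactly to second order and to within $O(n^{-1+\eps})$ in every third and fourth moment. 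Since $M_n$ matches GUE to second order, so does $\hat M_n$, and $\hat M_n$ still obeys Condition \condone\ for large $C_0$; hence by the heat flow universality of gauss divisible ensembles with $t\ge n^{-1+\eps}$ (namely \cite{EPRSY}), the asymptotic \eqref{k-asym} holds for $\hat A_n:=\sqrt n\,\hat M_n$ in the vague sense, i.e.\ \eqref{test} holds with $M_n$ replaced by $\hat M_n$.

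\emph{Step 2: swapping.} Setting $\tilde F(y_1,\ldots,y_k):=F(\rho_\sc(u)(y_1-nu),\ldots,\rho_\sc(u)(y_k-nu))$ and combining \eqref{roo} with \eqref{rfk}, one has
\[
\int_{\R^k} F(x)\,\rho^{(k)}_{n,u}(A_n)(x)\,dx = \rho_\sc(u)^k\,k!\!\!\sum_{1\le i_1<\cdots<i_k\le n}\!\!\E\,\tilde F(\lambda_{i_1}(A_n),\ldots,\lambda_{i_k}(A_n)),
\]
and similarly with $A_n$ replaced by $\hat A_n$. Since $F$ has fixed compact support, the summand vanishes unless every $\lambda_{i_a}(A_n)$ lies within $O(1)$ of $nu$; using the eigenvalue rigidity estimate \eqref{eigenrigid} together with the local semicircle law (Theorem~\ref{lsc}) to dispose of the rare exceptional events, only the $O(n^{O(\eps)})$ tuples lying in an $O(n^{O(\eps)})$-window around the central index $i^0$ with $\lambda^{\cl}_{i^0}(W_n)\approx u$ contribute anything beyond $o(1)$ to either sum. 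For each such tuple, $\tilde F$ (as a function of $\lambda_{i_1},\ldots,\lambda_{i_k}$) obeys the derivative bounds \eqref{G-deriv}, since $|\nabla^j\tilde F|=O_F(1)\le n^{c_0}$, and the matched moments of $M_n$ and $\hat M_n$ fall within the tolerances $\eps_3=n^{-1/2-Cc_0}$, $\eps_4=n^{-Cc_0}$ of Proposition~\ref{approx-moment} as soon as $Cc_0<1-\eps$; hence that proposition gives
\[
\big|\,\E\,\tilde F(\lambda_{i_1}(A_n),\ldots,\lambda_{i_k}(A_n))-\E\,\tilde F(\lambda_{i_1}(\hat A_n),\ldots,\lambda_{i_k}(\hat A_n))\,\big|\le n^{-c_0}.
\]
Summing over the $O(n^{O(\eps)})$ relevant tuples, with $\eps$ chosen small enough that $n^{O(\eps)-c_0}=o(1)$, I conclude that $\int F\,\rho^{(k)}_{n,u}(A_n)=\int F\,\rho^{(k)}_{n,u}(\hat A_n)+o(1)$, and by Step~1 the right-hand side converges to $\int F\,\rho^{(k)}_{\Dyson}$; this is \eqref{test}.

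\emph{The main obstacle.} The genuinely hard analytic input — the heat flow universality at the near-optimal time $t=n^{-1+\eps}$ and the Four Moment Theorem itself — is already packaged in the results of Sections~\ref{heatflow} and~\ref{sketch}, so I would treat it as given. The delicate part is the interface between the two: the Four Moment Theorem controls a single fixed $k$-tuple of eigenvalues tested against a smooth function, whereas \eqref{test} is a statement about the correlation \emph{measure} $\rho^{(k)}_{n,u}$, and the translation via \eqref{rfk} only works if the sum over index tuples is first trimmed down to size $n^{o(1)}$ — this is exactly where rigidity \eqref{eigenrigid} is indispensable, and it is the reason $c_0$ must be taken larger than a fixed multiple of $\eps$. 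A secondary difficulty is that a general Wigner matrix cannot be matched to GUE beyond second order, which forces the detour through the gauss divisible approximant $\hat M_n$; verifying that $\hat M_n$ can simultaneously be made to have time parameter $\ge n^{-1+\eps}$ (so that \cite{EPRSY} applies) and to match $M_n$ within the error budget of Proposition~\ref{approx-moment} is what ultimately constrains $\eps$, and is also why for discrete ensembles one can only hope for vague, rather than locally uniform, convergence here.
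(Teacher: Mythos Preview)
Your proposal is correct and follows essentially the same route as the paper: apply the heat flow universality of \cite{EPRSY} at time $t=n^{-1+\eps}$ to a gauss divisible approximant, use eigenvalue rigidity \eqref{eigenrigid} to restrict the sum in \eqref{rfk} to $n^{o(1)}$ index tuples, and then swap via Proposition~\ref{approx-moment}. One minor remark: the reference to Lemma~\ref{match} in Step~1 is slightly misplaced (that lemma gives \emph{exact} fourth-order matching and needs three-point support); the relevant mechanism is exactly the parenthetical direct computation you give, namely that the Ornstein--Uhlenbeck flow of $M_n$ itself already matches $M_n$ to within $O(t)$ in the third and fourth moments while preserving the first two, which is precisely how the paper removes the three-point support restriction.
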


This theorem has been established as a result of a long sequence of partial results towards the Wigner-Dyson-Mehta conjecture \cite{Joh1, ERSY, EPRSY, TVlocal1, ERSTVY, EYY, EYY2, TVmeh}, which we will summarise (in a slightly non-chronological order) below; see Remark \ref{chron} for a more detailed discussion of the chronology.  The precise result stated above was first proven explicitly in \cite{TVmeh}, but relies heavily on the previous works just cited.  As recalled in Section \ref{gue-sec}, the asymptotic \eqref{k-asym} for GUE (in the sense of locally uniform convergence, which is far stronger than vague convergence) follows as a consequence of the Gaudin-Mehta formula and the Plancherel-Rotach asymptotics for Hermite polynomials\footnote{Analogous results are known for much wider classes of invariant random matrix ensembles, see e.g. \cite{DKMVZ}, \cite{PS}, \cite{BI}.  However, we will not discuss these results further here, as they do not directly impact on the case of Wigner ensembles.}.  

The next major breakthrough was by Johansson \cite{Joh1}, who, as discussed in Section \ref{heatflow}, establshed \eqref{k-asym} for gauss divisible ensembles at some fixed time parameter $t>0$ independent of $n$, obtained \eqref{k-asym} in the vague sense (in fact, the slightly stronger convergence of \emph{weak convergence} was established in that paper, in which the function $F$ in \eqref{test} was allowed to merely be $L^\infty$ and compactly supported, rather than continuous and compactly supported).    The main tool used in \cite{Joh1} was an explicit determinantal formula for the correlation functions in the gauss divisible case, essentially due to Br\'ezin and Hikami \cite{brezin}.

In Johansson's result, the time parameter $t > 0$ had to be independent of $n$.  It was realized by Erd\H{o}s, Ramirez, Schlein, and Yau that one could obtain many further cases of the Wigner-Dyson-Mehta conjecture if one could extend Johansson's result to much shorter times $t$ that decayed at a polynomial rate in $n$.  This was first achieved (again in the context of weak convergence) for $t > n^{-3/4+\eps}$ for an arbitrary fixed $\eps>0$ in \cite{ERSY}, and then to the essentially optimal case $t > n^{-1+\eps}$ (for weak convergence, and (implicitly) in the local $L^1$ sense as well) in \cite{EPRSY}.  By combining this with the method of reverse heat flow discussed in Section \ref{swap-sec}, the asymptotic \eqref{k-asym} (again in the sense of weak convergence) was established for all Wigner matrices whose distribution obeyed certain smoothness conditions (e.g. when $k=2$ one needs a $C^6$ type condition), and also decayed exponentially.  The methods used in \cite{EPRSY} were an extension of those in \cite{Joh1}, combined with an approximation argument (the ``method of time reversal'') that approximated a continuous distribution by a gauss divisible one (with a small value of $t$); the arguments in \cite{ERSY} are based instead on an analysis of the Dyson Brownian motion.

Note from the eigenvalue rigidity property \eqref{eigenrigid} that only a small number of eigenvalues (at most $n^{o(1)}$ or so) make\footnote{Strictly speaking, the results in \cite{EYY2} only establish the eigenvalue rigidity property \eqref{eigenrigid} assuming Condition \condo.  However, by using the Four Moment Theorem one can relax this to Condition \condone\ for a sufficiently large $C_0$, at the cost of making \eqref{eigenrigid} hold only with high probability rather than overwhelming probability.} a significant contribution to the normalised correlation function $\rho^{(k)}_{n,u}$ on any fixed compact set, and any fixed $u$.  Because of this, the Four Moment Theorem (Theorem \ref{theorem:Four}) can be used to show that\footnote{Very recently, it was observed in \cite{EY} that if one uses the Green's function Four Moment Theorem from \cite{EYY} in place of the earlier eigenvalue Four Moment Theorem from \cite{TVlocal1} at this juncture, then one can reach the same conclusion here without the need to invoke the eigenvalue rigidity theorem, thus providing a further simplification to the argument.} if one Wigner matrix $M_n$ obeyed the asymptotics \eqref{k-asym} in the vague sense, then any other Wigner matrix $M'_n$ that matched $M_n$ to fourth order would also obey \eqref{k-asym} in the vague sense, assuming that $M_n, M'_n$ both obeyed Condition \condone\ for a sufficiently large $C_0$ (so that the eigenvalue rigidity and four moment theorems are applicable).  

By combining the above observation with the moment matching lemma presented below, one immediately concludes Theorem \ref{vague-o} assuming that the off-diagonal atom distributions are supported on at least three points.

\begin{lemma}[Moment matching lemma]\label{match}  Let $\xi$ be a real random variable with mean zero, variance one, finite fourth moment, and which is supported on at least three points.  Then there exists a gauss divisible, exponentially decaying real random variable $\xi'$ that matches $\xi$ to fourth order.
\end{lemma}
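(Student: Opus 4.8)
The plan is to produce $\xi'$ in the explicit gauss divisible form
\[
 \xi' := \sqrt{1-a}\,\zeta + \sqrt{a}\,g ,
\]
where $g$ is a standard real Gaussian, $\zeta$ is an independent real random variable of mean zero and variance one that will be chosen to be supported on finitely many points, and $a \in (0,1)$ is a small parameter to be specified; then $\xi'$ is gauss divisible with parameter $t := -\log(1-a) > 0$, since $\sqrt{1-a} = e^{-t/2}$ and $\sqrt{a} = (1-e^{-t})^{1/2}$. For any such choice the mean of $\xi'$ is automatically zero and its variance is $(1-a)+a = 1$, so the matching requirement reduces to the third and fourth moments. Expanding and using $\E g = \E g^3 = 0$, $\E g^2 = 1$, $\E g^4 = 3$, $\E\zeta = 0$, $\E\zeta^2 = 1$, one finds $\E(\xi')^3 = (1-a)^{3/2}\E\zeta^3$ and $\E(\xi')^4 = (1-a)^2\E\zeta^4 + 6a(1-a) + 3a^2$. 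Hence $\xi'$ matches $\xi$ to fourth order exactly when
\[
 \E\zeta^3 = m_3(a) := \frac{\E\xi^3}{(1-a)^{3/2}}, \qquad
 \E\zeta^4 = m_4(a) := \frac{\E\xi^4 - 6a + 3a^2}{(1-a)^2}.
\]

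The second step is to invoke the classical (truncated Hamburger) moment problem: a probability measure on $\R$ with mean zero, variance one, third moment $m_3$, and fourth moment $m_4$ exists if and only if the $3\times 3$ Hankel matrix with rows $(1,0,1)$, $(0,1,m_3)$, $(1,m_3,m_4)$ is positive semidefinite, i.e.\ if and only if $m_4 \ge m_3^2 + 1$; and when this inequality is strict one may take the measure to be supported on three points. Such a three-point $\zeta$ is bounded, so $\xi'$ is the convolution of a compactly supported law with a Gaussian, hence has sub-Gaussian tails and in particular obeys the exponential decay condition demanded of an atom distribution. It therefore only remains to choose $a \in (0,1)$ with $m_4(a) > m_3(a)^2 + 1$.

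For this, set $\Delta(a) := m_4(a) - m_3(a)^2 - 1$, a continuous function on a neighbourhood of $a = 0$ with $\Delta(0) = \E\xi^4 - (\E\xi^3)^2 - 1$. The right-hand side is the determinant of the Hankel matrix of $\xi$ formed from its moments of order $0$ through $4$, equivalently the Gram determinant of $1,\xi,\xi^2$ in $L^2$ of the law of $\xi$; it is nonnegative and vanishes exactly when $1,\xi,\xi^2$ are linearly dependent, that is, when $\xi$ is supported on at most two points. By hypothesis $\xi$ is supported on at least three points, so $\Delta(0) > 0$, and by continuity $\Delta(a) > 0$ for all sufficiently small $a > 0$. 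Fixing any such $a$ and choosing $\zeta$ as in the previous paragraph yields the desired $\xi'$.

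The only genuine obstacle, and the sole place the three-point hypothesis enters, is this final point: one needs the target pair $(m_3(a), m_4(a))$ to lie in the \emph{interior} of the moment region of mean-zero, variance-one laws. That the hypothesis cannot be dropped is shown by the Bernoulli atom $\xi = \pm 1$, where $\E\xi^3 = 0$ and $\E\xi^4 = 1$ give $\Delta(a) = 2a(a-2)/(1-a)^2 < 0$ for every $a \in (0,1)$ --- consistent with the non-gauss-divisibility of the Bernoulli ensemble. Everything else is a direct computation with the two matching equations together with the classical moment-problem existence statement.
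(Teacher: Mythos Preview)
Your proof is correct and follows essentially the same approach as the one the paper cites from \cite[Lemma 28]{TVlocal1}: write $\xi' = e^{-t/2}\zeta + (1-e^{-t})^{1/2} g$ with $g$ standard Gaussian and $\zeta$ a finitely supported mean-zero, variance-one variable, reduce the fourth-order matching to prescribing $\E\zeta^3$ and $\E\zeta^4$, and realise these via the truncated Hamburger moment problem, whose solvability (with strict inequality, hence a three-point $\zeta$) is exactly the condition $\E\xi^4 - (\E\xi^3)^2 - 1 > 0$ guaranteed by the three-point support hypothesis on $\xi$. The continuity argument in $a$ to pass from $a=0$ to small positive $a$ is the natural way to close the gap, and your tail observation (convolution of a compactly supported law with a Gaussian is sub-Gaussian) handles the exponential decay requirement cleanly.
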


For a proof of this elementary lemma, see \cite[Lemma 28]{TVlocal1}.  The requirement of support on at least three points is necessary; indeed, if $\xi$ is supported in just two points $a,b$, then $\E (\xi-a)^2 (\xi-b)^2 = 0$, and so any other distribution that matches $\xi$ to fourth order must also be supported on $a,b$ and thus cannot be gauss divisible.  

To remove the requirement that the atom distributions be supported on at least three points, one can use the observation from Proposition \ref{approx-moment} that one only needs the moments of $M_n$ and $M'_n$ to \emph{approximately} match to fourth order in order to be able to transfer results on the distribution of spectra of $M_n$ to that of $M'_n$.  In particular, if $t = n^{-1+\eps}$ for some small $\eps > 0$, then the Ornstein-Uhlenbeck flow $M^t_n$ of $M_n$ by time $t$ is already close enough to matching the first four moments of $M_n$ to apply Proposition \ref{approx-moment}.  The results of \cite{EPRSY} give the asymptotic \eqref{k-asym} for $M^t_n$, and the eigenvalue rigidity property \eqref{eigenrigid} then allows one to transfer this property to $M_n$, giving Theorem \ref{vague-o}.

\begin{remark}\label{chron}  The above presentation (drawn from the most recent paper \cite{TVmeh}) is somewhat ahistorical, as the arguments used above eemerged from a sequence of papers, which obtained partial results using the best technology available at the time.  In the paper \cite{TVlocal1}, where the first version of the Four Moment Theorem was introduced, the asymptotic \eqref{k-asym} was established under the additional assumptions of Condition \condo, and matching the GUE to fourth order\footnote{In \cite{TVlocal1} it was claimed that one only needed a matching condition of GUE to third order, but this was not rigorously proven in that paper due to an issue with the Three Moment Theorem that we discuss in Appendix \ref{erratum}.}; the former hypothesis was due to the weaker form of the four moment theorem known at the time, and the latter was due to the fact that the eigenvalue rigidity result \eqref{eigenrigid} was not yet established (and was instead deduced from the results of Gustavsson \cite{Gus} combined with the Four Moment Theorem, thus necessitating the matching moment hypothesis).  For related reasons, the paper in \cite{ERSTVY} (which first introduced the use of Proposition \ref{approx-moment}) was only able to establish \eqref{k-asym} after an additional averaging in the energy parameter $u$ (and with Condition \condo).  The subsequent progress in \cite{ESY} via heat flow methods gave an alternate approach to establishing \eqref{k-asym}, but also required an averaging in the energy and a hypothesis that the atom distributions be supported on at least three points, although the latter condition was then removed in \cite{EYY2}.  In a very recent paper \cite{ekyy2}, the exponent $C_0$ in Condition \condone\ has been relaxed to as low as $4+\eps$ for any fixed $\eps>0$, though still at the cost of averaging in the energy parameter.  Some generalisations in other directions (e.g. to covariance matrices, or to generalised Wigner ensembles with non-constant variances) were also established in \cite{BenP}, \cite{TVlocal3}, \cite{ESYY}, \cite{EYY}, \cite{EYY2}, \cite{ekyy}, \cite{ekyy2}, \cite{wang}, \cite{EY}.  For instance, it was very recently observed in \cite{EY} that by using a variant of the argument from  \cite{TVmeh}
(replacing  the asymptotic four moment theorem for eigenvalues by an asymptotic four moment theorem for the Green function)  together with a careful inspection of the arguments in \cite{EPRSY} and invoking some results from \cite{EYY}, one can extend Theorem \ref{vague-o} to generalised Wigner ensembles in which the entries are allowed to have variable variance (subject to some additional hypotheses); see \cite{EY} for details. 
\end{remark}

To close this section, we remark that while Theorem \ref{vague-o} is the ``right'' result for discrete Wigner ensembles (except for the large value of $C_0$ in Condition \condone, which in view of the results in \cite{ekyy2} should be reducible to $4+\eps$), one expects stronger notions of convergence when one has more smoothness hypotheses on the atom distribution; in particular, one should have local uniform convergence of the correlation functions when the distribution is smooth enough.  Some very recent progress in this direction in the $k=1$ case was obtained by Maltsev and Schlein \cite{maltsev}, \cite{maltsev2}.

For results concerning symmetric\footnote{With our conventions, the symmetric case is not a sub-case of the Hermitian case, because the matrix would now be required to match GOE to second order, rather than GUE to second order.} and symplectic  random matrices, we refer to \cite{Erd} and the references therein. In these cases, the Wigner-Dyson-Mehta conjecture is established in the context of averaged vague convergence; the case of vague convergence remains open even for gauss divisible distributions (although in the case when four moments agree with GOE, one can recover the vague convergence version of the conjecture thanks to the Four Moment Theorem).  This appears to be an inherent limitation to the relaxation flow method, at least with the current state of technology.  In the Hermitian case, this limitation is overcome by using the explicit formulae of Brezis-Hikami and Johansson (see \cite{Joh1}), but no such tool appears to be available at present in the symmetric and symplectic cases.  It remains an interesting future direction of research to find a way to overcome this limitation and obtain a complete solution to the Wigner-Dyson-Mehta conjecture in these cases, as this could lead to a number of interesting applications, such as the determination of the asymptotic distribution of the least singular value of the adjacency matrix of an Erd\H{o}s-Renyi graph, which remains open currently despite the significant partial progress (see \cite{ekyy2}) on the Wigner-Dyson-Mehta conjecture for such matrices (though see the recent papers \cite{nguyen}, \cite{vershynin} for some lower bounds relating to this problem).
 
\subsection{Distribution of the gaps}
  
In Section \ref{gue} the averaged gap distribution
$$ S_n(s,u,t_n) := \frac{1}{t_n} |\{ 1 \leq i \leq n: |\lambda_i(A_n)-nu| \leq t_n/\rho_\sc(u); \lambda_{i+1}(A_n)-\lambda_i(A_n) \leq s/\rho_\sc(u) \}.$$
was defined for a given energy level $-2 < u < 2$ and a scale window $t_n$ with $1/t_n, t_n/n$ both going to zero as $n \to \infty$.  For GUE, it is known that the expected value $\E S_n(s,u,t_n)$ of this distribution converges as $n \to\infty$ (keeping $u$, $s$ fixed) to the (cumulative) Gaudin distribution \eqref{snsu}; see \cite{DKMVZ}.  Informally, this result asserts that a typical normalised gap $\rho_\sc(u) ( \lambda_{i+1}(A_n)-\lambda_i(A_n) )$, where $\lambda_i^\sc(W_n) = u + o(1)$, is asymptotically distributed on the average according to the Gaudin distribution.

The eigenvalue  gap distribution has received much attention in the mathematics
community, partially thanks to  the fascinating (numerical)
coincidence with the gap distribution of the zeros of the zeta
functions. For more discussions, we refer to  \cite{Deibook, KS,
Deisur} and the references therein. \footnote{We would like to thank P. Sarnak for  
enlightening conversations regarding the gap distribution and for constantly encouraging us to work on the universality problem.} 

It is possible to use an inclusion-exclusion argument to deduce information about $\E S_n(s,u,t_n)$ from information on the $k$-point correlation functions; see e.g. \cite{Deibook}, \cite{DKMVZ}, \cite{Joh1}, \cite{EPRSY}, \cite{ERSTVY}.  In particular, one can establish universality of the Gaudin distribution for the Wigner matrix ensembles considered in those papers, sometimes assuming additional hypotheses due to the averaging of the correlation function in the $u$ parameter; for instance, in \cite{ERSTVY} universality is shown for all Wigner matrices obeying Condition \condo, assuming that $t_n/n$ decays very slowly to zero, by utilising the universality of the averaged $k$-point correlation function established in that paper.

A slightly different approach proceeds by expressing the moments of $S_n(s,u,t_n)$ in terms of the joint distribution of the eigenvalues.
Observe that
$$ \E S_n(s,u,t_n) = \frac{1}{t_n} \sum_{i=1}^n \E 1_{|\lambda_i(A_n)-nu| \leq t_n/\rho_\sc(u)} 1_{\lambda_{i+1}(A_n)-\lambda_i(A_n) \leq s/\rho_\sc(u)}.$$
Replacing the sharp cutoffs in the expectation by smoothed out versions, and applying Theorem \ref{theorem:Four} (and also using \eqref{eigenrigid} to effectively localise the $i$ summation to about $O(t_n) + O(n^{o(1)})$ indices), we see that if $M_n, M'_n$ have four matching moments and both obey Condition \condone\ for a sufficiently large $C_0$, then one has
$$ \E S_n(s,u,t_n) \leq \E S'_n(s+o(1), u,(1+o(1))t_n) + o(1)$$
and similarly
$$ \E S_n(s,u,t_n) \geq \E S'_n(s-o(1), u,(1-o(1))t_n) - o(1)$$
for suitable choices of decaying quantities $o(1)$.  In particular, if $M'_n$ is known to exhibit Gaudin asymptotics \eqref{snsu} for any $-2 < u < 2$ and any $t_n$ 
with $1/t_n, t_n/n = o(1)$, then $M_n$ does as well.  In \cite{Joh1}, the Gaudin asymptotics \eqref{snsu} were established for gauss divisible ensembles (with fixed time parameter $t$) and any $t_n$ with $1/t_n, t_n/n=o(1)$, and thus by Lemma \ref{match} and the above argument we conclude	that \eqref{snsu} also holds for Wigner matrices obeying Condition \condone\ for a sufficiently large $C_0$ whose off-diagonal atom distributions are supported on at least three points.  This last condition can be removed by using Proposition \ref{approx-moment} as in the previous section (using the results in \cite{EPRSY} instead of \cite{Joh1}), thus giving \eqref{snsu} with no hypotheses on the Wigner matrix other than Condition \condone\ for a sufficiently large $C_0$. 

\begin{remark} This argument appeared previously in \cite{TVlocal1}, but at the time the eigenvalue rigidity result \eqref{eigenrigid} was not available, and the Four Moment Theorem required Condition \condo\ instead of Condition \condone, so the statement was weaker, requiring both Condition \condo\ and a vanishing third moment hypothesis (for the same reason as in Remark \ref{chron}).  By averaging over all $u$ (effectively setting $t_n = n$), the need for eigenvalue rigidity could be avoided in \cite{TVlocal1} (but note that the formulation of the averaged universality for the eigenvalue gap in \cite[(5)]{TVlocal1} is not quite correct\footnote{More precisely, instead of
$$
S_n(s; x):=\frac{1}{n} |\{1 \leq i \leq n: x_{i+1}-x_i \le s \} |,
$$
the gap distribution should instead be expressed as
$$
S_n(s; x):=\frac{1}{n} |\{1 \leq i \leq n: x_{i+1}-x_i \le \frac{s}{\rho_\sc(t_i)} \} |,
$$
where $t_i \in [-2,2]$ is the classical location of the $i^{\operatorname{th}}$ eigenvalue:
$$ \int_{-2}^{t_i} \rho_\sc(x)\ dx := \frac{i}{n}.$$} as stated, due to the absence of the normalisation by $\rho_\sc(u)$).
\end{remark}

\begin{remark} A similar argument also applies to higher moments $\E S_n(s,u,t_n)^k$ of $S_n(s,u,t_n)$, and so in principle one can also use the moment method to obtain universal statistics for the full distribution of $S_n(s,u,t_n)$, and not just the expectation.  However, this argument would require extending the results in \cite{DKMVZ}, \cite{Joh1}, or \cite{EPRSY} to control the distribution (and not just the expectation) of $S_n(s,u,t_n)$ for GUE, gauss divisible matrices (with fixed time parameter), or gauss divisible matrices (with $t=n^{-1+\eps}$) respectively.
\end{remark}

\begin{remark} It is natural to ask whether the averaging over the window $t_n$ can be dispensed with entirely.  Indeed, one expects that the distribution of the individual normalised eigenvalue gaps $\rho_\sc(u) ( \lambda_{i+1}(A_n)-\lambda_i(A_n) )$, where $-2<u<2$ is fixed and $\lambda^\sc_i(W_n)=u+o(1)$, should asymptotically converge to the Gaudin distribution in the vague topology, without any averaging in $i$.  The Four moment theorem allows one to readily deduce such a fact for general Wigner ensembles once one has established it for special ensembles such as GUE or gauss divisible ensembles (and to remove all moment matching and support hypotheses on the Wigner ensemble, one would need to treat gauss divisible ensembles with time parameter equal to a negative power of $n$).  However, control of the individual normalised eigenvalue gaps in the bulk are not presently in the literature, even for GUE, though they are in principle obtainable from determinantal process methods.
\end{remark}

\subsection{Universality of the counting function and gap probability} 

Recall for Section \ref{gue-sec} that one has an asymptotic \eqref{nnu} for the number of eigenvalues of a fine-scale normalised GUE matrix for an interval $I := [nu+a/\rho_\sc(u),nu+b/\rho_\sc(u)]$ in the bulk, where $-2<u<2$ and $a,b \in \R$ are fixed independently of $n$; this asymptotic is controlled by the spectral of the Dyson sine kernel $K_\Sine$ on the interval $[a,b]$.  In fact one can allow $u$ to vary in $n$, so long as it lies inside an interval $[-2+\eps,2-\eps]$ for some fixed $\eps>0$.

Using the four moment theorem, one can extend this asymptotic to more general Wigner ensembles.

\begin{theorem}[Asymptotic for $N_I$]\label{ni-asym}  
Consider a Wigner matrix satisfying Condition \condone\ for a sufficiently large constant $C_0$.   Let $\eps>0$ and $a < b$ be independent of $n$.  For any $n$, let $u = u_n$ be an element of $[-2+\eps,2-\eps]$.  Then the asymptotic \eqref{nnu} (in the sense of convergence in distribution) holds.
\end{theorem}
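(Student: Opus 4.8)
The plan is to deduce Theorem \ref{ni-asym} from the sine-kernel universality of Theorem \ref{vague-o}, by running --- now in the absence of any determinantal structure --- the same passage from correlation functions to the counting function that yields \eqref{nnu} in the GUE case. The first step is to observe that Theorem \ref{vague-o} continues to hold when the energy is allowed to vary, $u = u_n \in [-2+\eps,2-\eps]$: this is routine, since the ingredients behind Theorem \ref{vague-o} --- the heat-flow universality of \cite{EPRSY}, the Four Moment Theorem (Theorem \ref{theorem:Four}), and the eigenvalue rigidity bound \eqref{eigenrigid} --- are all uniform over bulk energies and over bulk index tuples, and $[-2+\eps,2-\eps]$ is compact. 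Thus for every fixed $k$ the normalised $k$-point correlation functions converge, $\rho^{(k)}_{n,u_n}(A_n) \to \rho^{(k)}_{\Dyson}$, in the vague sense \eqref{test}.

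Next I would convert this into convergence in distribution of the integer-valued $N_I(A_n)$, where $I = [\,nu_n + a/\rho_\sc(u_n),\, nu_n + b/\rho_\sc(u_n)\,]$. Combining the identity $\E\binom{N_I(A_n)}{k} = \frac{1}{k!}\int_{I^k}R_n^{(k)}(A_n)$ with the microscopic rescaling in \eqref{roo} gives $\E\binom{N_I(A_n)}{k} = \frac{1}{k!}\int_{[a,b]^k}\rho^{(k)}_{n,u_n}(A_n)(x)\,dx$. Since $\rho^{(k)}_{\Dyson}$ is a continuous bounded function and $\partial\big([a,b]^k\big)$ is Lebesgue-null, vague convergence upgrades to convergence of these integrals over the closed box, so for each fixed $k$
\[
 \E\binom{N_I(A_n)}{k} \longrightarrow \frac{1}{k!}\int_{[a,b]^k}\rho^{(k)}_{\Dyson}(x)\,dx = \E\binom{N_I^{\Dyson}}{k},
\]
where $N_I^{\Dyson} := \sum_{j=1}^\infty \xi_j$ is the limiting variable in \eqref{nnu} and the last equality is the standard determinantal identity expressing $\frac{1}{k!}\int_{[a,b]^k}\det(K_{\Dyson}(x_i,x_j))$ as the $k$-th elementary symmetric function of the eigenvalues $\lambda_j$ of $T_{[a,b]}$.

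It then remains to invoke the method of moments. The eigenvalues $\lambda_j$ lie in $[0,1]$ with $\sum_j\lambda_j = \tr T_{[a,b]} < \infty$, so the factorial moments of $N_I^{\Dyson}$ satisfy $\E\binom{N_I^{\Dyson}}{k} \le \frac{1}{k!}(\tr T_{[a,b]})^k$, which grows slowly enough that the moment problem for $N_I^{\Dyson}$ is determinate. A matching uniform-integrability bound $\sup_n \E\binom{N_I(A_n)}{k} < \infty$ follows from the vague convergence, by dominating $1_{[a,b]^k}$ by a fixed continuous compactly supported function. Convergence of all factorial moments to those of a moment-determinate limit, together with this uniform integrability, yields $N_I(A_n) \to N_I^{\Dyson}$ in distribution, which is exactly \eqref{nnu}.

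I expect the main point requiring care to be the measure-theoretic bookkeeping in the discrete case, where $R_n^{(k)}(A_n)$ is only a measure: for finite $n$ one must justify identifying $\E\binom{N_I(A_n)}{k}$ with the mass on the closed box and exclude eigenvalues sitting exactly on $\partial I$ or coinciding, events whose probability tends to zero (by the gap theorem, Theorem \ref{gap}, and the vague bound on $R_n^{(1)}$), which is enough for a distributional conclusion. A more robust alternative that sidesteps this entirely is to work directly with $N_I$: smooth $1_I$ at a scale $\delta = n^{-c}$ into cutoffs $f_- \le 1_I \le f_+$ with $|\nabla^j f_\pm| = O(\delta^{-j})$, sandwich $N_I(A_n)$ between $\sum_i f_\mp(\lambda_i(A_n))$ (restricting the sums to an index window of size $n^{o(1)}$ via \eqref{eigenrigid}), compose with a smooth cutoff of the event $\{\,\cdot \le m\,\}$, and apply the approximate Four Moment Theorem (Proposition \ref{approx-moment}) against the Ornstein--Uhlenbeck evolution $M_n^t$ with $t = n^{-1+\eps}$ --- a gauss-divisible matrix for which \eqref{nnu} holds by the Br\'ezin--Hikami formula and \cite{EPRSY}, and whose one-point function is bounded on bulk compacts, so that eigenvalues within $\delta$ of $\partial I$ contribute only $O(\delta)$ on the reference side and the jump of $N_I$ is absorbed there. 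I expect this second route, though longer, to be the cleaner one to make fully rigorous.
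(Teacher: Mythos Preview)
Your proposal is correct and follows essentially the same route as the paper: the paper's proof sketch is precisely ``the moment method, combining Theorem \ref{vague-o} with the identity $\E\binom{N_I}{k} = \frac{1}{k!}\int_{[a,b]^k}\rho^{(k)}_{n,u}$'', which is your main argument (the paper defers the remaining details to \cite{TVmeh}). Your discussion of the measure-theoretic issues in the discrete case and the alternative direct-swapping route go beyond what the paper states explicitly, but the core strategy is the same.
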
 

\begin{proof}  See \cite[Theorem 8]{TVmeh}.  The basic idea is to use the moment method, combining Theorem \ref{vague-o} with the identity
$$ \E \binom{N_I}{k} = \frac{1}{k!} \int_{[a,b]} \rho^{(k)}_{n,u}(x_1,\ldots,x_k)\ dx_1 \ldots dx_k$$
for any $k \geq 1$.  We remark that the method also allows one to control the asymptotic joint distribution of several intervals $N_{I_1}(A_n), \ldots, N_{I_k}(A_n)$; for instance, one can show that $N_I(A_n)$ and $N_J(A_n)$ are asymptotically uncorrelated if $I, J$ have bounded lengths, lie in the bulk, and have separation tending to infinity as $n \to \infty$.  We omit the details.
\end{proof}

Specialising to the case $u=0$, one can obtain universal behaviour for the probability that $M_n$ has no eigenvalue in an interval of the form $(-t/2\sqrt{n}, t/2\sqrt{n})$ for any fixed $t$:

\begin{corollary}\label{lsv-gue} For any fixed $t > 0$, and $M_n$ satisfy the conditions of Theorem \ref{ni-asym}, one has
$$ \P( \frac{1}{\sqrt n} M_n \,\, \hbox{has no eigenvalues in } \,\, (-t/2n, t/2n)  ) \to \exp( \int_0^t  \frac{f(x)}{x} dx )$$
as $n \to \infty$, where $f:\R \to \R$ is the solution of the differential equation
$$(tf'')^2 + 4 (tf'-f) (tf' -f + (f')^2) =0 $$
with the asymptotics  $f(t)  = \frac{-t}{\pi} -\frac{t^2}{\pi^2} -\frac{t^3}{\pi^3} + O(t^4)$ as $t \to 0$.
\end{corollary}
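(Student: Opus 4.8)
The plan is to read the corollary off from Theorem \ref{ni-asym} specialized to the energy $u=0$, and then to identify the resulting Fredholm determinant of the Dyson sine kernel with the Painlev\'e expression via the classical formula of Jimbo, Miwa, M\^{o}ri and Sato \cite{JMM}.

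First I would translate the event into the language of Theorem \ref{ni-asym}. Writing $W_n = \frac{1}{\sqrt n}M_n$ and $A_n = \sqrt n M_n = n W_n$, the event that $\frac{1}{\sqrt n}M_n$ has no eigenvalue in $(-t/2n, t/2n)$ is precisely the event $N_{(-t/2, t/2)}(A_n) = 0$. Since $\rho_\sc(0) = \frac{1}{2\pi}\sqrt 4 = \frac1\pi$, the scaling \eqref{roo} identifies the $A_n$-interval $(-t/2, t/2)$ with the fixed normalized interval $[a,b] := [-\tfrac{t}{2\pi}, \tfrac{t}{2\pi}]$. Applying Theorem \ref{ni-asym} with $u = u_n \equiv 0 \in [-2+\eps, 2-\eps]$ and this $a,b$, we obtain the convergence in distribution
$$ N_{(-t/2, t/2)}(A_n) \;\longrightarrow\; \sum_{j=1}^\infty \xi_j, $$
where the $\xi_j$ are independent Bernoulli variables with $\E\xi_j = \lambda_j$ and the $\lambda_j$ are the eigenvalues of the sine-kernel operator $T_{[a,b]}$. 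Since the limit is supported on the nonnegative integers, $\{0\}$ is a continuity point of its law, so
$$ \P\big( \tfrac{1}{\sqrt n}M_n \text{ has no eigenvalue in } (-t/2n, t/2n) \big) \;\longrightarrow\; \P\Big( \sum_{j=1}^\infty \xi_j = 0 \Big) = \prod_{j=1}^\infty (1 - \lambda_j) = \det(1 - T_{[a,b]}). $$

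It then remains to recognize $\det(1 - T_{[-t/2\pi,\, t/2\pi]})$, where $T_{[a,b]}$ has the Dyson sine kernel $K_\Dyson(x,y) = \frac{\sin(\pi(x-y))}{\pi(x-y)}$, as $\exp\big(\int_0^t f(x)/x\,dx\big)$. Here I would first conjugate by the dilation $x \mapsto \pi x$, which carries $T_{[-t/2\pi, t/2\pi]}$ to the operator on $L^2([-t/2,t/2])$ with kernel $\frac{\sin(x-y)}{\pi(x-y)}$ and leaves the Fredholm determinant unchanged. For this operator the Jimbo--Miwa--M\^{o}ri--Sato analysis \cite{JMM} expresses $\det(1 - T_{[-s,s]})$ in terms of the sigma-form of the fifth Painlev\'e equation; in the normalization adapted to the interval $(-t/2, t/2)$ this reads $\det(1 - T_{[-t/2\pi, t/2\pi]}) = \exp\big(\int_0^t f(x)/x\,dx\big)$ with $f$ the solution of $(tf'')^2 + 4(tf' - f)(tf' - f + (f')^2) = 0$ satisfying $f(t) = -\tfrac{t}{\pi} - \tfrac{t^2}{\pi^2} - \tfrac{t^3}{\pi^3} + O(t^4)$ as $t \to 0$; the leading coefficient here is consistent with the first Neumann term $\log\det(1 - T_{[-t/2\pi, t/2\pi]}) = -\tfrac{t}{\pi} + O(t^2)$, since $\tr T_{[-t/2\pi, t/2\pi]} = \tfrac{t}{\pi}$ for the Dyson kernel. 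Combining with the previous display proves the corollary.

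The step I expect to be the main obstacle is not any one of these individually but rather the normalization bookkeeping in the last paragraph: one must keep the fine-scale normalization of $A_n$, the Dyson sine-kernel normalization used in \eqref{roo}--\eqref{nnu}, and the convention of \cite{JMM} mutually consistent so that the factors of $\pi$ cancel exactly and the small-$t$ expansion of $f$ emerges precisely as stated. There is no genuinely new analytic input beyond Theorem \ref{ni-asym} and the classical determinantal-process and Painlev\'e identities; one should nonetheless double-check that the convergence furnished by Theorem \ref{ni-asym} is strong enough to extract the gap probability (it is, since the limit is an integer-valued determinantal count, for which $0$ is a continuity point), and that the standard determinantal-process limit \eqref{nnu} already accounts for the contributions near the endpoints of the interval, so that no separate estimate is needed there.
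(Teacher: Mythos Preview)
Your proposal is correct and follows essentially the same approach as the paper: the paper's proof simply says that the GUE case is classical (via \cite{JMM}, \cite{AGZ}) and that the general case then follows from Theorem \ref{ni-asym}, which is exactly what you have spelled out in detail. Your careful tracking of the normalizations (the $\rho_\sc(0)=1/\pi$ factor, the resulting interval $[-t/2\pi,t/2\pi]$, and the conversion to the Painlev\'e form) is a faithful expansion of what the paper leaves implicit.
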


\begin{proof}  In the case of GUE, this was established in \cite[Theorem 3.1.2]{AGZ}, \cite{JMM}.  The general case then follows from Theorem \ref{ni-asym}.  A weaker extension was established in \cite{TVlocal1}, assuming matching moments with GUE to fourth order as well as Condition \condo.
\end{proof}

This corollary is the Hermitian version of the Goldstine-Von Neumann least singular value problem (see \cite{vonG, edel, RV, RV2, TVhard} for more details).  Recently, some additional bounds on the least singular value of Wigner matrices were established in \cite{nguyen}, \cite{vershynin}.

The above universality results concerned intervals whose length was comparable to the fine-scale eigenvalue spacing (which, using the fine-scale normalisation $A_n$, is $1/\rho_\sc(u)$).  One can also use the Four Moment Theorem to obtain similar universality results for the counting function on much larger scales, such as $N_I(W_n)$, where $I := [y,+\infty)$ with $y \in (-2,2)$ in the bulk.  For instance, in \cite{Gus} the mean and variance of this statistic for GUE matrices was computed as
\begin {equation} \label{Gustavsson}
\begin{split}
\E[N_I(W_n)]&= n\int_I \rho_{\sc} +O \Big (\frac{\log n}{n} \Big ) \\
\Var(N_I(W_n))&=\Big(\frac{1}{2\pi^2}+o(1)\Big)\log n.
\end{split}
\end{equation}
Combining this with a general central limit theorem for determinantal processes due to Costin and Lebowitz \cite{CLe}, one obtains
\begin {equation} \label {CLT}
\frac{N_I(W_n)-\E[N_I(W_n')]}{\sqrt{\Var(N_I(W_n'))}} \underset{n \to \infty}{\to} \CN(0,1).
\end{equation}
and hence
\begin{equation} \label {CLTnumerics}
\frac{N_I(W_n)-n \int_I \rho_{\sc}}{\sqrt{\frac{1}{2\pi^2}\log n}} \underset{n \to \infty}{\to} \CN(0,1)
\end{equation}

This result was extended to more general Wigner ensembles:

\begin{theorem}[Central limit theorem for $N_I(W_n)$]
Let $M_n$ be a Wigner matrix obeying Condition \condo\ which matches the corresponding entries of GUE up to order $4$.   Let  $y \in (-2,2)$ be fixed, and set $I := [y,+\infty)$.  Then the asymptotic \eqref{CLTnumerics} hold (in the sense of probability distributions), as do the mean and variance bounds \eqref{Gustavsson}.
\end{theorem}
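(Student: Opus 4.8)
The plan is to deduce the central limit theorem for $N_I(W_n)$ from the already-established individual-eigenvalue fluctuation result (Theorem \ref{gust}) via the elementary duality between the counting function and eigenvalue locations, and then to recover the mean and variance asymptotics \eqref{Gustavsson} by a rigidity-based localisation combined with the Four Moment Theorem. Note that Theorem \ref{gust} applies here: Condition \condo\ implies Condition \condone\ for every $C_0$, and the fourth-moment matching with GUE is assumed.

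\textbf{The CLT.} The starting point is the exact identity
$$ \{ N_{[y,+\infty)}(W_n) \le m \} = \{ \lambda_{n-m}(W_n) < y \} = \{ \lambda_{n-m}(A_n) < ny \}, \qquad 0 \le m \le n-1, $$
which follows purely from the fact that $\{ j : \lambda_j(W_n) \ge y\}$ is an upper set of indices. Fix $s\in\R$ and put $m=m_n(s):=\lfloor n\int_I\rho_\sc + s\sqrt{\tfrac{1}{2\pi^2}\log n}\rfloor$ and $i=i_n(s):=n-m_n(s)$. Since $\int_{-2}^y\rho_\sc+\int_I\rho_\sc=1$, inverting $\int_{-2}^{\lambda^\cl}\rho_\sc=i/n$ gives $\lambda_{i_n(s)}^\cl(W_n)=y+O(\sqrt{\log n}/n)$, hence $\lambda_{i_n(s)}^\cl(A_n)=n(y+o(1))$ and $i_n(s)$ is a bulk index with energy parameter $u=y$. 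Applying Theorem \ref{gust} to this (moving) index yields $\big(\lambda_{i_n(s)}(A_n)-\lambda_{i_n(s)}^\cl(A_n)\big)/\sigma_n\to N(0,1)_\R$ with $\sigma_n$ the Gustavsson normalisation of \eqref{gustav}; since $ny-\lambda_{i_n(s)}^\cl(A_n)$ equals $s$ times (the leading term of) $\sigma_n$ — this is exactly the statement that Gustavsson's two GUE normalisations, for $\lambda_i$ and for $N_I$, are mutually consistent, the ratio of the squared normalisations being $\rho_\sc(y)^2$ — the threshold in the associated distribution function converges to $s$, and continuity of the standard normal distribution function $\Phi$ gives $\Pr(N_I(W_n)\le m_n(s))\to\Phi(s)$. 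This is precisely \eqref{CLTnumerics}; only the GUE inputs and Theorem \ref{gust} (which already carries out the Four Moment transfer of \eqref{gustav}) are used.

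\textbf{Mean and variance.} For \eqref{Gustavsson} the single-eigenvalue reduction is not by itself enough, since $N_I(W_n)$ involves all $n$ eigenvalues and is not directly a test function satisfying the derivative budget \eqref{G-deriv}. Here I would invoke eigenvalue rigidity \eqref{eigenrigid}: with overwhelming probability every $\lambda_j(A_n)$ with $|\lambda_j^\cl(A_n)-ny|>n^{\eps}$ lies on the same side of $ny$ as its classical location, so on that event $N_I(W_n)=C_n+R_n$ where $C_n$ is the deterministic count of classical locations exceeding $y$ and $R_n=\#\{\,j: |\lambda_j^\cl(A_n)-ny|\le n^{\eps},\ \lambda_j(A_n)>ny\,\}$ depends on only $O(n^{\eps})$ eigenvalues. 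Replacing the sharp indicators in $R_n$ by smooth cutoffs at scale $n^{-c_0}$, and noting that for a sum of univariate bumps the mixed partials vanish so that $R_n$ itself obeys \eqref{G-deriv} (after shrinking $c_0$), the Four Moment Theorem transfers $\E R_n$ from GUE up to error $n^{-c_0}$; applying it to $n^{-O(\eps)}R_n^2$ transfers the second moment as well. Combined with the GUE values (Gustavsson's determinantal computation) this gives $\Var(N_I(W_n))=(\tfrac{1}{2\pi^2}+o(1))\log n$ at once, and $\E N_I(W_n)=n\int_I\rho_\sc+o(\sqrt{\log n})$, which is already enough to combine with the CLT above. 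To sharpen the mean to the stated $O(\log n/n)$ one supplements the rigidity localisation with Gaussian tail bounds $\Pr(|\lambda_j(A_n)-\lambda_j^\cl(A_n)|>t\sqrt{\log n})\ll e^{-ct^2}$ in the bulk (transferred from GUE by the Four Moment Theorem) and the classical $O(1/n)$ control of the mean spectral density of $W_n$ away from the edge, the near-symmetric cancellation in $\sum_j(\Pr(\lambda_j(A_n)>ny)-\mathbf 1_{\lambda_j^\cl(A_n)>ny})$ doing the rest.

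\textbf{Main obstacle.} The CLT itself is a near-formal consequence of Theorem \ref{gust}. The genuinely delicate points are (i) checking that the GUE normalisations for individual eigenvalues and for $N_I$ agree, so the threshold in the second step converges to exactly $s$ rather than a constant multiple of it; and, more substantively, (ii) extracting the mean to precision $O(\log n/n)$, which lies well below the $n^{-c_0}$ accuracy of the Four Moment Theorem and so cannot be obtained by a swapping estimate alone — this is where the rigidity-plus-Gaussian-tail input above is essential, and I expect it to be the hardest part of a complete proof.
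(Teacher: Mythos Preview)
Your CLT argument via the counting/eigenvalue duality and Theorem~\ref{gust} is correct and is essentially the paper's approach: the paper's proof is a reference to \cite{DV} with the summary that the CLT is ``established using the Four Moment Theorem'', and you do precisely this, only routing the swapping step through the already-packaged Theorem~\ref{gust} and then inverting the duality, rather than localising $N_I$ directly and swapping. This is a clean variant, since the duality absorbs the localisation. Your variance argument (rigidity to reduce to $O(n^\eps)$ relevant indices, smooth, then transfer $\E R_n$ and $\E R_n^2$ via the Four Moment Theorem) is exactly what the paper's one-line summary indicates for ``the second part'', and with the bookkeeping on smoothing errors it goes through.

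The genuine gap is the mean estimate at precision $O(\log n/n)$. The Four Moment Theorem has an error of size $n^{-c_0}$ with $c_0$ a small absolute constant, and the rigidity localisation keeps $O(n^\eps)$ indices in play; so the best your transfer of $\E R_n$ can give is $\E N_I(W_n)=\E N_I(W'_n)+O(n^{-c_0})$, which is many orders of magnitude short of $\log n/n$. Your proposed sharpening via Gaussian tails and ``near-symmetric cancellation'' in $\sum_j\bigl(\Pr(\lambda_j(A_n)>ny)-\mathbf{1}_{\lambda_j^\cl(A_n)>ny}\bigr)$ does not close this: the approximate symmetry of the law of $\lambda_j(A_n)$ about $\lambda_j^\cl(A_n)$ is itself a GUE fact, and transferring it to $M_n$ at precision better than $n^{-c_0}$ is exactly the step you have no tool for. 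I do not see how any combination of the Four Moment Theorem in its stated form and rigidity reaches $O(\log n/n)$, and your sketch does not supply the missing mechanism. This does not affect the CLT \eqref{CLTnumerics} or the variance asymptotic, both of which your argument fully justifies; only the sharp mean bound in \eqref{Gustavsson} is left unproven.
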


\begin{proof} See \cite[Theorem 2]{DV}.  The first component of this theorem is established using the Four Moment Theorem; the second part also uses the eigenvalue rigidity estimate \eqref{eigenrigid}.
\end{proof}
  
\subsection{Distribution of the eigenvectors} 

Let $M_n$ be a matrix drawn from the gaussian orthogonal ensemble (GOE).  Then the eigenvalues $\lambda_i(M_n)$ are almost surely simple, and the unit eigenvectors $u_i(M_n) \in S^{n-1} \subset \R^{n-1}$ are well-defined up to sign.  To deal with this sign ambiguity, let us select each eigenvector $u_i(M_n)$ independetly and uniformly at random among the two possible choices.  Since the GOE ensemble is invariant under orthogonal transformations, the eigenvectors $u_i(M_n)$ must each be uniformly distributed on the unit sphere.  As is well known, this implies that the normalised coefficients $\sqrt{n} u_{i,j}(M_n) := \sqrt{n} u_i(M_n) e_j^*$ of these eigenvectors are asymptotically normally distributed; see \cite{Jiang-2006}, \cite{Jiang} for more precise statements in this direction.  

By combining the results of \cite{Jiang-2006}, \cite{Jiang} with the four moment theorem for eigenvectors (Theorem \ref{theorem:Four2}), one can extend this fact to other Wigner ensembles in \cite{TVeigenvector}.  Here is a typical result:
  
\begin{theorem}  \label{theorem:CLT2} Let $M_n$ be a random real symmetric matrix obeying hypothesis {\condone} for a sufficiently large constant $C_0$, 
 which matches GOE to fourth order.  Assume furthermore that the atom distributions of $M_n$ are symmetric (i.e. $\xi_{ij} \equiv -\xi_{ij}$ for all $1 \leq i,j \leq n$).  Let $i = i_n$ be an index (or more precisely, a sequence of indices) between $1$ and $n$, and let $a= a_n \in S^{n-1}$ be a unit vector in $\R^n$ (or more precisely, a sequence of unit vectors).  For each $i$, let $u_i(M_n) \in S^{n-1}$ be chosen randomly among all unit eigenvectors with eigenvalue $\lambda_i(M_n)$.  Then $\sqrt n u_{i}(M_n) \cdot a$ tends to $N(0,1)_\R$ in distribution as $n \to \infty$. 
\end{theorem}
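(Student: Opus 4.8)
The plan is to transfer the statement from GOE via the Four Moment Theorem for eigenvectors (Theorem \ref{theorem:Four2}), exactly in the spirit in which Theorem \ref{gust} was deduced from Theorem \ref{theorem:Four}. On the GOE side the conclusion is essentially classical: since the GOE distribution is invariant under conjugation by orthogonal matrices, the randomly signed unit eigenvector $u_i$ of a GOE matrix is exactly uniformly distributed on $S^{n-1}$, so $\sqrt n\, u_i \cdot a \stackrel{d}{=} (g\cdot a)/(\|g\|/\sqrt n)$ with $g$ standard gaussian, and this converges to $N(0,1)_\R$ (and $n(u_i\cdot a)^2$ converges to the $\chi^2_1$ law); see \cite{Jiang-2006}, \cite{Jiang} for this and for the sharper joint and quantitative versions. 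It therefore suffices to show that the distribution of $\sqrt n\, u_i(M_n)\cdot a$ is within $o(1)$, in a suitable weak sense, of that of the corresponding GOE quantity.

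The point that requires genuine work — and which I expect to be the main obstacle — is that Theorem \ref{theorem:Four2} as stated controls only the projection coefficients $nP_{i,p,q}(M)=n\, e_p^* P_i(M) e_q$ in the \emph{standard} basis, whereas $a$ is an arbitrary fixed unit vector, typically with all $n$ coordinates nonzero, so $a^* P_i(M) a$ is not a bounded linear combination of the $P_{i,p,q}$. I would instead re-run the proof of Theorem \ref{theorem:Four2} sketched in Section \ref{sketch} (Hadamard variation formulae plus Taylor expansion of $\lambda_i(A(t))$, $a^* P_i(A(t)) a$ and $Q_i(A(t))$ in the swapped entry $t$, where $A'(0)=e_p e_q^*+e_q e_p^*$), with the fixed vector $a$ playing the role previously played by a coordinate vector $e_p$. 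The extra input this needs is a delocalisation bound against $a$: with overwhelming probability $\sup_{1\le j\le n}|u_j(M_n)\cdot a| = O(n^{-1/2+o(1)})$. For a \emph{fixed} $a$ (no uniformity over the sphere) this follows from an anisotropic refinement of the local semicircle law — one establishes $a^*(W_n-z)^{-1}a = s_{\sc}(z)+o(1)$ for $z=E+i\eta$ with $\eta\ge n^{-1+\eps}$ by the same Schur-complement/self-consistent-equation analysis used for the diagonal entries $((W_n-z)^{-1})_{jj}$ in Section \ref{semi-sec}, and then reads off the bound from $\Im\, a^*(W_n-z)^{-1}a = \sum_j \frac{\eta}{(\lambda_j(W_n)-E)^2+\eta^2}|u_j(W_n)\cdot a|^2$. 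Granting this, every numerator in the Hadamard formulae is $O(n^{-1/2+o(1)})$ (using also that $A'(0)$ is localised and ordinary delocalisation), each denominator $\lambda_j(A(t))-\lambda_i(A(t))$ is controlled via $Q_i(A(t))\ll n^{O(c_0)}$ and the local semicircle law exactly as before, and one obtains the Taylor bounds $\frac{d^m}{dt^m}\bigl(a^* P_i(A(t)) a\bigr)=O(n^{-m/2+o(1)})$ for $m=1,\dots,5$. Following Section \ref{sketch} verbatim then yields
\[
|\E\, G(\dots, n\, a^* P_i(A_n) a) - \E\, G(\dots, n\, a^* P_i(A'_n) a)| \le n^{-c_0}
\]
for $G$ smooth with the derivative bounds \eqref{G-deriv-2}, whenever $M_n, M'_n$ match to fourth order off the diagonal and second order on the diagonal and obey Condition \condone\ for a sufficiently large $C_0$.

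With this general-test-vector Four Moment Theorem in hand the rest is routine. Apply it with $k=1$ and $G$ a smooth bump function of the single real variable $n\, a^* P_i(\cdot) a = n(u_i(\cdot)\cdot a)^2$, with $M'_n$ drawn from GOE; after the usual interval-thickening argument (cf. the proof of Theorem \ref{gust}) this shows that the law of $n(u_i(M_n)\cdot a)^2$ differs from that of $n(u_i(M_n^{\mathrm{GOE}})\cdot a)^2$ by $o(1)$ on intervals, and the latter converges to $\chi^2_1$ by the GOE computation above. Finally, since $u_i(M_n)$ (and likewise the GOE eigenvector) is given a uniformly random independent sign, $\sqrt n\, u_i(M_n)\cdot a$ is a symmetric real random variable, so its law is determined by the law of its square; combining the convergence of the square to $\chi^2_1$ with an independent symmetric sign gives $\sqrt n\, u_i(M_n)\cdot a \to N(0,1)_\R$. (The symmetry hypothesis $\xi_{ij}\equiv -\xi_{ij}$ enters in securing the moment matchings with GOE — in particular the vanishing odd moments — and, via invariance of $M_n$ in law under conjugation by diagonal sign matrices, in simplifying the reduction; only the fourth-moment matching is essential for the Four Moment input itself.)
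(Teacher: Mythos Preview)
Your overall strategy is right and matches the paper's (which simply cites \cite{TVeigenvector}): establish the result for GOE by rotational invariance, transfer via a Four Moment Theorem for eigenvectors with a general test vector $a$, and then pass from $n(u_i\cdot a)^2$ to $\sqrt n\, u_i\cdot a$ using the random sign. The one genuine gap is your claim that the anisotropic delocalisation $\sup_j |u_j(M_n)\cdot a|=O(n^{-1/2+o(1)})$ follows from ``the same Schur-complement/self-consistent-equation analysis'' as for the diagonal resolvent entries. It does not: the identity \eqref{snn} exploits the independence of the $j$-th column from the $j$-th minor, and there is no analogous decomposition when $a$ is not a coordinate vector (conjugating by an orthogonal matrix sending $a$ to $e_1$ destroys the independence of the entries). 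The isotropic local law $a^*(W_n-z)^{-1}a\approx s_{\sc}(z)$ is a genuinely harder statement, established only later (Knowles--Yin, Bloemendal--Erd\H os--Knowles--Yau--Yin) via high-moment expansions rather than a direct Schur argument; the naive attempt to sum the entrywise bounds $|G_{pq}-s_{\sc}\delta_{pq}|=O(n^{-1/2+o(1)})$ against $a_pa_q$ loses a full power of $n$.

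The route actually taken in \cite{TVeigenvector} sidesteps the isotropic local law. Rather than proving anisotropic delocalisation \emph{a priori}, one augments the truncated Four Moment Theorem (Theorem~\ref{trunc}) by adding $R_i:=n(u_i^*a)^2$ to the list of tracked quantities alongside $(\lambda_{i_j},Q_{i_j})$, with an additional support restriction $R_i\le n^{c_0}$. On this truncated region the needed Taylor bounds $\frac{d^m}{dt^m}R_i=O(n^{-m/2+O(c_0)})$ follow from standard coordinate delocalisation together with Cauchy--Schwarz and Parseval (one only uses $\sum_j|u_j^*a|^2=1$, never the individual smallness of $|u_j^*a|$), so the swap goes through. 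The high-probability bound $R_i\le n^{c_0}$ is then itself transferred from GOE --- where it is immediate from uniformity on the sphere --- by precisely the mechanism used to propagate the gap property in Section~\ref{sketch}. Your route via an isotropic local law would also work and is arguably cleaner once that law is available, but it imports heavier machinery than the original argument requires.
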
 

\begin{proof} See \cite[Theorem 13]{TVeigenvector}.
\end{proof}

As an example to illustrate Theorem \ref{theorem:CLT2}, we can take $a = a_n := \frac{1}{\sqrt n}(1,\ldots, 1) \in S^{n-1}$, and $i := \lfloor n/2\rfloor$.  Then Theorem \ref{theorem:CLT2} asserts that the sum of the entries of the middle eigenvector $u_{\lfloor n/2\rfloor}(M_n)$ is gaussian in the limit.

\subsection{Central limit theorem for log-determinant} 

One of most natural and important matrix functionals is the \emph{determinant}.  As such, the study of determinants of random matrices has a long and rich history. The earlier papers on this study focused on the determinant $\det A_n$ of the non-Hermitian iid model $A_n$, where the entries $\zeta_{ij}$ of the matrix were independent random variables with mean $0$ and variance $1$ \cite{SzT, FT, NRR, Turan, Pre, Kom, Kom1, 
TVdet, Goodman, Girko, G2, Dembo} (see \cite{NVdet}  for a brief survey of these results). 

In \cite{Goodman}, Goodman considered random gaussian matrices $A_n = (\zeta_{ij})_{1 \leq i,j \leq n}$ where the atom variables $\zeta_{ij}$ are iid standard real gaussian variables, $\zeta_{ij} \equiv N(0,1)_\R$. He noticed that in this case the square of the determinant can be expressed as the product of independent 
chi-square variables. Therefore, its logarithm is the sum of independent variables and thus one expects
a central limit theorem to hold. In fact, using properties of the chi-square distribution,  it is not hard to prove\footnote{Here and in the sequel, $\rightarrow$ denotes convergence in distribution.}
\begin{equation} \label{Girko} \frac{\log (|\det A_n|)- \frac{1}{2} \log n! + \frac{1}{2} \log n}{\sqrt{\frac{1}{2} \log n}}  \rightarrow N(0,1)_\R, \end{equation}
where $N(0,1)_\R$ denotes the law of the real gaussian with mean $0$ and variance $1$.

A similar analysis (but with the real chi distribution replaced by a complex chi distribution) also works for complex gaussian matrices, in which $\zeta_{ij}$ remain jointly independent but now have the distribution of the complex gaussian $N(0,1)_\C$ (or equivalently, the real and imaginary parts of $\zeta_{ij}$ are independent and have the distribution of $N(0,\frac{1}{2})_\R$).  In that case, one has a slightly different law
\begin{equation}\label{Girko-2} \frac{\log (|\det A_n|)- \frac{1}{2} \log n! + \frac{1}{4} \log n}{\sqrt{\frac{1}{4} \log n}}  \rightarrow N(0,1)_\R. \end{equation}

We turn now to real iid matrices, in which the $\zeta_{ij}$ are jointly independent and real with mean zero and variance one.
In \cite{G2},  Girko stated that \eqref{Girko} holds 
for such random matrices  under the additional assumption
that the fourth moment of the atom variables is $3$.  Twenty years later,  he claimed a  
much stronger result which replaced the above assumption by the assumption that the atom variables have bounded $(4 +\delta)$-th moment \cite{Girko}. However, there are several points which are  not clear in 
these papers. Recently, Nguyen and the second author \cite{NVdet} gave a  new proof for  \eqref{Girko}. Their approach also results in an estimate for the rate of convergence and is easily extended to handle to complex case. 

The analysis of the above random determinants relies crucially on the fact that the rows of the matrix are jointly independent.  This independence no longer holds for Hermitian random matrix models, which makes the analysis of determinants of Hermitian random matrices more challenging. 
Even showing that the determinant of  a random symmetric Bernoulli matrix is non-zero (almost surely) was  a
long standing open question by Weiss in the 1980s and solved only five years ago \cite{CTV}; for more recent developments see 
\cite[Theorem 31]{TVlocal1}, \cite{Costello}, \cite{nguyen}, \cite{vershynin}. These results give good lower bound for the absolute value of the determinant or the least singular number, but do not reveal any information about the distribution.

Even in the GUE case, it is highly non-trivial to 
prove an analogue of the central limit theorem \eqref{Girko-2}. Notice that  the observation 
of Goodman does not apply due to the dependence between the rows and so it is not even clear why a central limit theorem must hold for the log-determinant. 
In \cite{DC}, Delannay and Le Caer made use of the explicit distribution of GUE and GOE to prove the central limit theorem for these cases. 
   
While it does not seem to be possible to express the log-determinant of GUE as a sum of independent random variables, in \cite{TVdet2}, the authors  found 
 a way to approximate the log-determinant as a sum of weakly dependent terms, based on  analysing a tridiagonal form of GUE due to Trotter \cite{trotter}\footnote{We would like to thank R. Killip for suggesting 
the use of Trotter's form.}. 
Using stochastic calculus and the martingale central limit theorem, we gave another proof (see \cite{TVdet2}) for the central limit theorem for GUE and GOE: 
%Our first result is the verification of this conjecture for GUE.

\begin{theorem} 
[Central limit theorem for log-determinant of GUE and GOE]\label{clt-gue}\cite{DC} Let $M_n$ be drawn from GUE.  Then
$$ \frac{\log |\det(M_n)| - \frac{1}{2}  \log {n!} + \frac{1}{4} \log n }{\sqrt{\frac{1}{2} \log n}} \rightarrow N(0,1)_\R. $$

Similarly, if $M_n$ is drawn from GOE rather than GUE, one has 
$$ \frac{\log |\det(M_n)| - \frac{1}{2}  \log {n!} + \frac{1}{4} \log n }{\sqrt{\log n}} \rightarrow N(0,1)_\R. $$
\end{theorem}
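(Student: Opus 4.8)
The plan is to follow the tridiagonal-model route: reduce the log-determinant of this highly correlated matrix to an additive functional of an explicit one-dimensional inhomogeneous Markov recursion via Trotter's tridiagonalization, and then run a martingale central limit theorem on the resulting (telescoped) sum. First I would conjugate $M_n$ by a sequence of Householder reflections (Trotter's construction \cite{trotter}) to a real Jacobi matrix $T_n$ whose diagonal entries $a_1,\dots,a_n$ are i.i.d.\ real Gaussian and whose off-diagonal entries $b_1,\dots,b_{n-1}$ are independent, with each $b_k^2$ a suitably normalised chi-square variable whose number of degrees of freedom grows linearly in $k$; all $2n-1$ of these variables are jointly independent, and the conjugation is orthogonal, so $\det M_n = \det T_n$. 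For a tridiagonal matrix the leading principal minors $D_k := \det\big((T_n)_{1\le i,j\le k}\big)$ satisfy the three-term recurrence $D_k = a_k D_{k-1} - b_{k-1}^2 D_{k-2}$ (with the conventions $D_0=1$, $D_1=a_1$); setting $R_k := D_k/D_{k-1}$, which is almost surely finite and non-zero for every $k$, this becomes the first-order recursion $R_k = a_k - b_{k-1}^2/R_{k-1}$ together with
\[ \log|\det M_n| = \sum_{k=1}^n \log|R_k| . \]
The problem is thereby converted into a central limit theorem for an additive functional of the Markov chain $(R_k)$.

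Second, I would analyse $(R_k)$ and locate the Gaussian fluctuation. Since $\mathbb{E}[b_{k-1}^2]$ is of order $k$ while $a_k$ is of order $1$, the recursion is to leading order the deterministic period-two map $R\mapsto -\mathbb{E}[b_{k-1}^2]/R$ driven by an independent perturbation of size $O(k^{-1/2})$ at step $k$; consequently the product $|R_kR_{k-1}|$ is of order $k$ with high probability, which after the telescoping in the recursion produces the deterministic main term --- it works out to $\tfrac12\log n! - \tfrac14\log n + o(\sqrt{\log n})$, the correction $-\tfrac14\log n$ being the Stirling-type adjustment linked to the logarithmic energy $\int\log|y|\,\rho_{\sc}(y)\,dy$ of the semicircle law. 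Letting $\mathcal F_k$ be the $\sigma$-algebra generated by the top-left $k\times k$ block of $T_n$, the fresh randomness entering $R_k$ perturbs $\log|R_k|$ only at scale $O(k^{-1/2})$, so the Doob decomposition $\log|R_k| = \mathbb{E}[\log|R_k|\mid\mathcal F_{k-1}] + \eta_k$ produces a martingale difference sequence $(\eta_k)$ with $\mathbb{E}[\eta_k^2\mid\mathcal F_{k-1}] = \Theta(1/k)$. (This is where ``stochastic calculus'' enters: one may interpolate the recursion to a diffusion and read off the same splitting from It\^o's formula, $d\log|R| = (\text{drift})\,dt + (\text{martingale})$.) Then the predictable part equals the deterministic quantity above up to $o(\sqrt{\log n})$, and the martingale has conditional variance $\sum_k\mathbb{E}[\eta_k^2\mid\mathcal F_{k-1}] = (v+o(1))\log n$ in probability, with $v=\tfrac12$ for GUE and $v=1$ for GOE --- the factor two tracing back to the off-diagonal chi variables carrying $\beta\cdot k$ degrees of freedom with $\beta=2$ in the Hermitian case and $\beta=1$ in the real symmetric case. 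Combining the convergence of the predictable part with the martingale central limit theorem for the martingale part --- for which one checks convergence of the conditional variance and a conditional Lindeberg condition --- gives $\big(\log|\det M_n| - \tfrac12\log n! + \tfrac14\log n\big)/\sqrt{v\log n}\to N(0,1)_\R$, i.e.\ the GUE statement for $v=\tfrac12$ and the GOE statement for $v=1$.

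The hardest part is pinning down the predictable term to the required accuracy $o(\sqrt{\log n})$, which is smaller than its own main term $\tfrac12\log n!$ by a factor of order $n/\sqrt{\log n}$. Because the recursion is nearly period-two, the individual increments $\log|R_k|$ oscillate by $O(1)$ and do not self-average term by term; the cancellation shows up only after grouping --- for instance by passing to $\log|R_kR_{k-1}|$, in which the oscillation drops out and only the $O(k^{-1/2})$ noise survives --- and the predictable part then obeys an implicit (telescoping) identity that must be solved to show it is deterministic to leading order. Carrying this out requires a quantitative self-averaging statement for $(R_k)$ and, most delicately, a uniform-in-$k$ lower-tail estimate ruling out the rare events on which some leading minor $|D_k|$ (equivalently $|R_k|$) is anomalously small: there $\log|R_k|$ is large and negative, which would corrupt both the expectation bookkeeping and the conditional Lindeberg condition in the martingale central limit theorem. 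This ``no near-singular leading minor'' bound, together with the exact evaluation of the constant $v$, is the technical heart of the argument; by contrast the tridiagonalization, the three-term recurrence, and the high-probability $O(\sqrt k)$ control of $R_k$ are comparatively routine. The GOE statement is then obtained by the identical scheme on the $\beta=1$ tridiagonal model, the only substantive change being that the off-diagonal chi variables carry half as many degrees of freedom --- precisely what doubles the limiting variance from $\tfrac12\log n$ to $\log n$.
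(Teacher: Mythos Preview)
Your proposal is correct and follows essentially the same approach that the paper describes (and attributes to \cite{TVdet2}): Trotter's tridiagonalization to reduce $\log|\det M_n|$ to a sum $\sum_k \log|R_k|$ via the three-term minor recurrence, followed by a martingale central limit theorem analysis of the resulting weakly dependent sum, with the stochastic-calculus viewpoint used to organise the drift/martingale splitting. The paper does not give the details here (it is a survey), but your outline matches the method it sketches, including the identification of the $\beta=1$ versus $\beta=2$ source of the variance discrepancy between GOE and GUE.
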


The next task is to extend  beyond the GUE or GOE case.  Our main tool for this is a four moment theorem for log-determinants of Wigner matrices, analogous 
to Theorem \ref{theorem:Four}.

\begin{theorem}[Four moment theorem for determinant]\label{four-moment}  Let $M_n, M'_n$ be Wigner matrices whose atom distributions have independent real and imaginary parts that match to fourth order off the diagonal and to second order on the diagonal, are bounded by $n^{O(c_0)}$ for some sufficiently small but fixed $c_0 > 0$, and are supported on at least three points.  
Let $G: \R \to \R$ obey the derivative estimates
\begin{equation}\label{g-bound}
 |\frac{d^j}{dx^j} G(x)| =O( n^{c_0})
\end{equation}
for $0 \leq j \leq 5$.  Let $z_0 = E + \sqrt{-1}\eta_0$ be a complex number with $|E| \leq 2-\delta$ for some fixed $\delta>0$.  Then
$$ \E G( \log|\det (M_n-\sqrt{n} z_0)| ) - \E G( \log|\det(M'_n-\sqrt{n} z_0)| ) = O( n^{-c} )$$
for some fixed $c>0$, adopting the convention that $G(-\infty)=0$. \end{theorem}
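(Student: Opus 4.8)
\emph{Proof idea.} The plan is to run the Lindeberg exchange strategy of Section~\ref{sketch} directly on the linear eigenvalue statistic
$$ \log|\det(M_n-\sqrt n z_0)| \;=\; \Re\,\tr\log(M_n-\sqrt n z_0) \;=\; \tfrac n2\log n+\sum_{i=1}^n \log|\lambda_i(W_n)-z_0|, $$
where $W_n:=n^{-1/2}M_n$; the deterministic shift $\tfrac n2\log n$ can be absorbed into $G$ without affecting the bounds \eqref{g-bound}, so it suffices to study $\E\, G\big(\sum_i\log|\lambda_i(W_n)-z_0|\big)$. As in Section~\ref{sketch} we swap the entries of $M_n$ for those of $M'_n$ one at a time; for an off-diagonal swap we write $M(t)=M(0)+t(e_pe_q^*+e_qe_p^*)$ with $M(0)$ a matrix independent of both $\xi_{pq}$ and $\xi'_{pq}$, and similarly $M(0)+te_pe_p^*$ on the diagonal. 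Telescoping $O(n^2)$ off-diagonal and $O(n)$ diagonal swaps, it will suffice to establish a per-swap error $O(n^{-5/2+O(c_0)})$ off the diagonal and $O(n^{-3/2+O(c_0)})$ on it, giving a total error $O(n^{-1/2+O(c_0)})=O(n^{-c})$ for $c_0$ small enough.

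The main analytic step is a derivative estimate for $F(t):=G(\log|\det(M(t)-\sqrt n z_0)|)$ in the regime $|t|=O(n^{O(c_0)})$ (the range where $\xi_{pq},\xi'_{pq}$ lie, by hypothesis). Differentiating and using $\tfrac{d}{dt}(M(t)-\sqrt n z_0)^{-1}=-(M(t)-\sqrt n z_0)^{-1}V(M(t)-\sqrt n z_0)^{-1}$ with $V$ the (rank $\le 2$) swap matrix, one gets
$$ \frac{d^{\,j}}{dt^{\,j}}\,\Re\,\tr\log(M(t)-\sqrt n z_0) \;=\; (-1)^{j-1}(j-1)!\;\Re\,\tr\big[(RV)^j\big], \qquad R:=(M(t)-\sqrt n z_0)^{-1}=n^{-1/2}(W(t)-z_0)^{-1}. $$
Since $RV$ has at most two nonzero eigenvalues, equal to $R_{pq}\pm\sqrt{R_{pp}R_{qq}}$ in the off-diagonal case (and $R_{pp}$ in the diagonal case), one has $\tr[(RV)^j]=O\big((|R_{pq}|+\sqrt{|R_{pp}R_{qq}|})^j\big)$. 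The local semicircle law (Theorem~\ref{lsc}) and eigenvector delocalisation (Corollary~\ref{deloc}) bound the diagonal Green's function entries $(W_n-z)^{-1}_{aa}$ by $O(n^{o(1)})$ and the off-diagonal ones by $O(n^{-1/2+o(1)})$ whenever $\Im z\ge n^{-1+\eps}$, hence $|R_{aa}|,|R_{ab}|=O(n^{-1/2+o(1)})$ with overwhelming probability, and therefore $\tfrac{d^j}{dt^j}\log|\det(M(t)-\sqrt n z_0)|=O(n^{-j/2+o(1)})$ for $j=1,\dots,5$. Taylor expanding $F(t)$ to fourth order then gives $F(t)=\sum_{\ell=0}^4 f_\ell t^\ell+O(n^{-5/2+O(c_0)})$, with $f_\ell$ depending only on $M(0)$ and $G$ and $|f_\ell|=O(n^{-\ell/2+O(c_0)})$ (using \eqref{g-bound} and $|t|\le n^{O(c_0)}$). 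Putting $t=\xi_{pq}$ and $t=\xi'_{pq}$, taking expectations, and cancelling the $t^0,\dots,t^4$ terms using the matching moments gives the per-swap bounds. (For Hermitian rather than real symmetric $M_n$ one swaps the real and imaginary parts of $\xi_{pq}$ separately using their independence; and, exactly as in \cite{TVlocal1}, the derivative bounds are obtained uniformly over the relevant range of $t$ by discretising $t$ and a short continuity argument.)

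The hard part will be the regime of small $\eta_0=\Im z_0$ --- in particular $\eta_0=0$, which is the case needed to transfer the central limit theorem for $\log|\det M_n|$ --- where the Green's function bounds above are not available off the shelf and $\log|\det(M_n-\sqrt n z_0)|$ can be large and negative (or $-\infty$). Here one combines: (i) the local semicircle law, which still guarantees that only $O(n^{o(1)})$ eigenvalues of $W_n$ lie within $n^{-1+o(1)}$ of $\Re z_0$, uniformly over $|t|=O(n^{O(c_0)})$; (ii) a quantitative level-repulsion / least-singular-value estimate showing that, with high probability, the distance from $\sqrt n z_0$ to the spectrum of $M_n$ (and of $M'_n$) is $\gg n^{-C}$ for some fixed $C$ --- this is where the hypotheses that the atom distributions be supported on at least three points, be bounded by $n^{O(c_0)}$, and obey Condition~\condone\ with $C_0$ large are used, via the circle of ideas behind the gap theorem (Section~\ref{sketch}) and the symmetric least-singular-value bounds of \cite{CTV, Costello, nguyen, vershynin}; and (iii) a truncated four moment theorem in the spirit of Theorem~\ref{trunc}, in which $G$ is cut off to the event that a suitably regularised distance from $\sqrt n z_0$ to the spectrum is $\ge n^{-O(1)}$. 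On the truncated event the resolvent entries of $(W(t)-z_0)^{-1}$, and hence all the $t$-derivatives of $\log|\det(M(t)-\sqrt n z_0)|$, are polynomially bounded uniformly in $t$ --- now because the nearest eigenvalue is kept at distance $\ge n^{-O(1)}$, not because $\Im z_0$ is large --- so the swap argument of the previous paragraph carries through with a still-summable error; the contribution of the complementary rare event is discarded using the boundedness of the entries, the anti-concentration bounds from (ii), and the convention $G(-\infty)=0$. Combining the truncated statement with (ii) applied to both $M_n$ and $M'_n$ then yields the theorem in full.

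I expect step (ii) --- obtaining a lower bound on the least singular value of $M_n-\sqrt n z_0$ that is strong enough, and robust enough to survive a single-entry perturbation of size $n^{O(c_0)}$ as eigenvalues of $M(t)$ sweep past $\sqrt n z_0$ --- to be the principal obstacle; the rest is a fairly mechanical adaptation of the machinery of Section~\ref{sketch}, with the resolvent identity playing the role that the Hadamard variation formulae play there. An alternative route that avoids running the swap at small $\eta_0$ is to apply the Green's-function four moment theorem of \cite{EYY} at the mesoscopic scale $\eta_1=n^{-1+c_1}$ and pass from $\eta_0$ to $\eta_1$ via $\partial_\eta\log|\det(W_n-(E+\sqrt{-1}\eta))|=\Im\,\tr(W_n-(E+\sqrt{-1}\eta))^{-1}$; but the resulting correction $\tfrac12\sum_i\log\frac{(\lambda_i(W_n)-E)^2+\eta_1^2}{(\lambda_i(W_n)-E)^2+\eta_0^2}$, which involves only the $O(n^{o(1)})$ eigenvalues of $W_n$ near $E$, must then itself be controlled under swapping using the eigenvalue four moment theorem (Theorem~\ref{theorem:Four}) together with the same level-repulsion input, so the essential difficulty is unchanged either way.
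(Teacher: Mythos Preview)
The paper does not actually prove Theorem~\ref{four-moment}; it is stated as a result from \cite{TVdet2} and then used to deduce Corollary~\ref{clt-wigner}. So there is no in-paper proof to compare against. That said, your outline is a faithful sketch of the strategy in \cite{TVdet2}, and your identification of the principal obstacle is correct.

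A few comments on the two routes you describe. Your ``alternative route'' --- passing from $\eta_0$ up to a mesoscopic scale $\eta_1$ via $\partial_\eta\log|\det(W_n-(E+i\eta))|=n\,\Im s_n(E+i\eta)$ and then swapping the Stieltjes transform integral using resolvent expansions --- is in fact the primary route taken in \cite{TVdet2}, not a secondary one. The swapping there is done directly on the resolvent (as in \cite{EYY}) rather than through the eigenvalue four moment theorem, precisely because resolvent identities make the $t$-derivatives of $(W(t)-z)^{-1}_{ab}$ mechanical to bound, whereas tracking individual eigenvalues near $E$ as $t$ varies is delicate for the reason you yourself flag (eigenvalues of $M(t)$ can sweep through $\sqrt n z_0$). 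Your ``main'' route, by contrast, would require the truncation to be stable under a perturbation of size $n^{-1/2+O(c_0)}$ in the $W_n$-scale, which is much larger than the $n^{-O(1)}$ gap one gets from anti-concentration; this is not insuperable, but it is why the integral-of-Stieltjes approach is cleaner.

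The level repulsion input you anticipate in (ii) is exactly where the ``supported on at least three points'' hypothesis enters in \cite{TVdet2}: it is used to show that with probability $1-O(n^{-c})$ there is at most one eigenvalue in any interval of length $n^{-1-c}$ around $E$, which combined with a crude least-singular-value bound controls the passage from $\eta_0$ to a tiny positive $\eta$. Your remark that this hypothesis (and the bulk restriction $|E|\le 2-\delta$) is an artifact of the available level-repulsion technology matches the paper's own commentary following the theorem statement.
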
 

  The requirements that $M_n, M'_n$ be supported on at least three points, and that $E$ lie in the bulk region $|E| < 2-\delta$ are artificial, due to the state of current literature on level repulsion estimates.  It is likely that with further progress on those estimates that these hypotheses can be removed.  The hypothesis that the atom distributions have independent real and imaginary parts is mostly for notational convenience and can also be removed with some additional effort.

By combining Theorem \ref{four-moment} with Theorem \ref{clt-gue} we obtain

\begin{corollary}[Central limit theorem for log-determinant of Wigner matrices]\label{clt-wigner}  Let $M_n$ be a Wigner matrix whose atom distributions $\zeta_{ij}$ are independent of $n$, have real and imaginary parts that are independent and match GUE to fourth order, and obey Condition {\condone} for some sufficiently large $C_0$.  Then
$$ \frac{\log |\det(M_n)| -  \frac{1}{2} \log n! + \frac{1}{4} \log n}{\sqrt{\frac{1}{2} \log n}} \rightarrow N(0,1)_\R. $$
If $M_n$ matches GOE instead of GUE, then one instead has
$$ \frac{\log |\det(M_n)| -  \frac{1}{2} \log n! + \frac{1}{4} \log n}{\sqrt{\log n}} \rightarrow N(0,1)_\R. $$
\end{corollary}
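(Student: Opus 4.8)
The plan is to deduce Corollary~\ref{clt-wigner} from the GUE/GOE central limit theorem (Theorem~\ref{clt-gue}) by a swapping argument built on the four moment theorem for determinants (Theorem~\ref{four-moment}), applied at the spectral parameter $z_0 = 0$. Fix the deterministic normalising constants $a_n := \frac12\log n! - \frac14\log n$ and $b_n := \sqrt{\tfrac12\log n}$ (in the GUE case; $b_n := \sqrt{\log n}$ in the GOE case), and write $Y_n := (\log|\det M_n| - a_n)/b_n$, with the convention $Y_n = -\infty$ when $\det M_n = 0$. Let $M'_n$ be drawn from GUE (resp.\ GOE) and let $Y'_n$ be its normalised log-determinant; since $M'_n$ has a continuous law, $\det M'_n \neq 0$ almost surely, and Theorem~\ref{clt-gue} gives $Y'_n \to N(0,1)_\R$ in distribution. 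It therefore suffices to show that $Y_n$ has the same limiting distribution. The key structural observation is that the normalisation is \emph{deterministic}, so for a fixed smooth compactly supported test function $\tilde G\colon\R\to\R$ we may fold the affine change of variables into the test function supplied to Theorem~\ref{four-moment}: take $G(x) := \tilde G\big((x-a_n)/b_n\big)$. Since $\tilde G$ has compact support, $\lim_{x\to-\infty}G(x)=0$, so the convention $G(-\infty)=0$ in Theorem~\ref{four-moment} is consistent; and since $b_n\ge 1$ for $n$ large, $|\tfrac{d^j}{dx^j}G(x)| = b_n^{-j}|\tilde G^{(j)}((x-a_n)/b_n)| \le \|\tilde G^{(j)}\|_\infty \le n^{c_0}$ for all $n$ large, so the derivative hypothesis \eqref{g-bound} is automatic --- rescaling by a factor $b_n\to\infty$ only shrinks derivatives.

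First I would reduce to the setting in which Theorem~\ref{four-moment} literally applies, i.e.\ the atom variables are bounded by $n^{O(c_0)}$ and supported on at least three points, whereas the hypotheses of the Corollary only provide Condition~\condone. This is the standard truncation step: because $C_0$ is taken large, with probability $1-o(1)$ no entry of $M_n$ or of $M'_n$ exceeds $n^{c_0}$ in magnitude, so replacing each matrix by its entrywise truncation at that level (and rescaling to restore unit variance, a relative correction of size $n^{-\omega}$ with $\omega$ large) changes $\log|\det|$ only on an event of probability $o(1)$ and perturbs the first four moments by at most the quantities $\varepsilon_j$ of Proposition~\ref{approx-moment}; also, matching GUE (or GOE) to fourth order already forces the atom distribution to be supported on at least three points (a mean-zero, variance-one, third-moment-zero two-point law is $\pm1$ and has fourth moment $1\ne 3$). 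One then applies the approximate-matching version of Theorem~\ref{four-moment} (which follows from the same proof, exactly as Proposition~\ref{approx-moment} follows from Theorem~\ref{theorem:Four}) with $z_0=0$, noting $|0|\le 2-\delta$ and $\det(M_n-\sqrt n\,z_0)=\det M_n$, to obtain $\E G(\log|\det M_n|) - \E G(\log|\det M'_n|) = O(n^{-c})$. Since $G(\log|\det M_n|) = \tilde G(Y_n)$ (including on $\{\det M_n=0\}$, where both sides equal $\tilde G(-\infty)=0$), this reads $\E\tilde G(Y_n) = \E\tilde G(Y'_n) + O(n^{-c})$ for every fixed smooth compactly supported $\tilde G$.

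Combining this with Theorem~\ref{clt-gue} --- which, as $\tilde G$ is bounded and continuous, gives $\E\tilde G(Y'_n)\to\E\tilde G(\zeta)$ for $\zeta\sim N(0,1)_\R$ --- shows that the laws of $Y_n$ converge vaguely to $N(0,1)_\R$. I would then upgrade this to genuine convergence in distribution by a soft tightness argument: given $\epsilon>0$, pick $[a,b]$ with $\P(\zeta\in[a,b])>1-\epsilon$ and a smooth $\tilde G$ with $\mathbf 1_{[a,b]}\le\tilde G\le\mathbf 1_{[a-1,b+1]}$; then $\liminf_n\P(Y_n\in[a-1,b+1])\ge\lim_n\E\tilde G(Y_n)=\E\tilde G(\zeta)>1-\epsilon$, so the laws of $Y_n$ are tight and no mass escapes to $\pm\infty$ (in particular $\P(\det M_n=0)\to 0$), and tightness plus vague convergence force $Y_n\to N(0,1)_\R$ in distribution. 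The GOE case is identical, using the GOE half of Theorem~\ref{clt-gue} and $b_n=\sqrt{\log n}$. The main obstacle, to the extent there is one once Theorems~\ref{four-moment} and~\ref{clt-gue} are granted, is the truncation reduction: one must confirm both that the truncation changes $\log|\det M_n|$ with probability only $o(1)$ (immediate once $C_0$ is large) and that the moment mismatch it introduces is small enough to be absorbed by the approximate four moment theorem. The usual worry in this subject --- that $\det M_n$ may vanish or be anomalously small, so that $\log|\det M_n|$ is sensitive to perturbations --- does not actually bite here, because the vanishing event has probability $o(1)$ and, the limiting law being a probability measure, no extra work is needed to rule out escape of mass to $-\infty$.
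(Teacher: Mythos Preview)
Your proposal is correct and follows essentially the same route the paper indicates: the paper omits the proof entirely, saying only that the deduction from Theorem~\ref{four-moment} and Theorem~\ref{clt-gue} is ``standard (closely analogous, for instance, to the proof of the central limit theorem for individual eigenvalues),'' i.e.\ the argument of Theorem~\ref{gust}. Your test-function/tightness version is a mild repackaging of that bump-function sandwich argument.

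You are in fact more careful than the paper on one point: the hypotheses of Theorem~\ref{four-moment} demand atoms bounded by $n^{O(c_0)}$ and supported on at least three points, whereas Corollary~\ref{clt-wigner} only assumes Condition~\condone. Your truncation-plus-approximate-matching reduction is the right way to close this gap (and your observation that fourth-order matching with a Gaussian forces at least three support points is exactly the remark the paper makes after the Corollary). The only thing to flag is that you invoke an approximate-moment version of Theorem~\ref{four-moment} by analogy with Proposition~\ref{approx-moment}; this is not stated in the paper, so in a self-contained write-up you would need to note that the Lindeberg swapping proof of Theorem~\ref{four-moment} tolerates moment errors of size $n^{-Cc_0}$ just as in the eigenvalue case.
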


The deduction of this proposition from Theorem \ref{four-moment} and Theorem \ref{clt-gue} is standard (closely analogous, for instance, to the proof the central limit theorem for individual eigenvalues) and is omitted.
(Notice that in order for the atom variables of $M_n$ match those of GUE 
to fourth order, these variables most have at least three points in their supports.)

\subsection{Concentration of eigenvalues}  \label{rigidity} 

We first discuss the case of the Gaussian Unitary Ensemble (GUE), which is the most well-understood case, as the joint distribution of the eigenvalues is given by a determinantal point process.  Because of this, it is known that for any interval $I$, the random variable $N_I(W_n)$ in the GUE case obeys a law of the form
\begin{equation}\label{etain}
 N_I(W_n) \equiv \sum_{i=1}^{\infty} \eta_i
\end{equation}
where the $\eta_i = \eta_{i,n,I}$ are jointly independent indicator random variables (i.e. they take values in $\{0,1\}$); see e.g. \cite[Corollary 4.2.24]{AGZ}.  The mean and variance of $N_I(W_n)$ can also be computed in the GUE case with a high degree of accuracy:

\begin{theorem}[Mean and variance for GUE]\label{mevar}  \cite{Gus} Let $M_n$ be drawn from GUE, let $W_n := \frac{1}{\sqrt{n}} M_n$, and let $I = [-\infty,x]$ for some real number $x$ (which may depend on $n$).  Let $\eps>0$ be independent of $n$.
\begin{itemize}
\item[(i)] (Bulk case) If $x \in [-2+\eps, 2-\eps]$, then
$$ \E N_I(W_n) = n \int_I \rho_\sc(y)\ dy + O( \frac{\log n}{n} ).$$
\item[(ii)] (Edge case) If $x \in [-2,2]$, then
$$ \E N_I(W_n) = n \int_I \rho_\sc(y)\ dy + O(1).$$
\item[(iii)] (Variance bound) If one has $x \in [-2,2-\eps]$ and $n^{2/3} (2+x) \to \infty$ as $n \to \infty$, one has
$$ \Var N_I(W_n) = (\frac{1}{2\pi^2} + o(1)) \log (n (2+x)^{3/2}).$$
In particular, one has $\Var N_I(W_n) = O( \log n )$ in this regime.
\end{itemize}
\end{theorem}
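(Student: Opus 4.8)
The plan is to exploit the determinantal structure of the GUE spectrum recorded in the Gaudin--Mehta formula \eqref{gaudin}, together with the classical Plancherel--Rotach asymptotics for Hermite polynomials, so as to reduce all three claims to asymptotic analysis of the Christoffel--Darboux kernel $K_n$. Throughout I work with the fine-scale matrix $A_n = \sqrt n M_n$, using $N_I(W_n) = N_{nI}(A_n)$, so that the one-point function is $R_n^{(1)} = K_n(\cdot,\cdot)$ by the $k=1$ case of \eqref{gaudin}.

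First, for the mean, one has the exact identity $\E N_I(W_n) = \int_{nI} K_n(y,y)\,dy$. The Plancherel--Rotach asymptotics give $\frac1n K_n(ny,ny) = \rho_\sc(y) + (\text{oscillatory correction})$ uniformly on compact subsets of $(-2,2)$, with the correction of size $O(1/n)$ oscillating at frequency of order $n$. Integrating over $I = [-\infty,x]$ and using the cancellation in the oscillatory term -- via a van der Corput / summation-by-parts estimate applied directly to the Christoffel--Darboux sum -- yields the $O(\log n/n)$ bound of (i), the logarithm arising from the slow decay of the oscillation amplitude as one approaches the endpoint $x$. For (ii), when $x$ is allowed all the way to $\pm 2$, one replaces the bulk asymptotics near the endpoints of $nI$ by the Airy-type asymptotics for $K_n$ near $\pm 2n$, whose contribution to $\int_{nI} K_n(y,y)\,dy$ is $O(1)$; this accounts for the cruder error term.

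For the variance the starting point is the general determinantal identity
\begin{equation*}
\Var N_I(W_n) = \tr(K_{nI}) - \tr(K_{nI}^2) = \int_{nI}\int_{\R\setminus nI} |K_n(s,t)|^2\, ds\, dt ,
\end{equation*}
where $K_{nI}$ is the restriction of $K_n$ to $L^2(nI)$ and we used that $K_n$ is the orthogonal projection onto the span of the first $n$ Hermite functions, so that $\int_\R |K_n(s,t)|^2\, dt = K_n(s,s)$. Since $I=[-\infty,x]$ this is a double integral over $s < nx < t$. The Plancherel--Rotach asymptotics show that in the bulk the rescaled kernel $\frac{1}{n\rho_\sc(x)} K_n\!\left(nx + \frac{\sigma}{\rho_\sc(x)},\, nx + \frac{\tau}{\rho_\sc(x)}\right)$ converges to the Dyson sine kernel $K_\Dyson(\sigma,\tau)$, with explicit error bounds and with Gaussian-type off-diagonal decay that renders negligible the contribution of pairs $(s,t)$ with $|s-t| \gg n$. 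Hence the main term comes from $|s-t|\lesssim n$, where after rescaling one is left with a truncation of the classical sine-kernel integral $\int_{-\infty}^{0}\int_{0}^{\infty} K_\Dyson(\sigma,\tau)^2\,d\sigma\,d\tau$; this integral diverges logarithmically, and the divergence is cut off at the scale at which the sine-kernel approximation breaks down, namely once the expected number of eigenvalues in $I$ (of order $n$ in the bulk) is exhausted, producing $(\frac{1}{2\pi^2}+o(1))\log n$ and hence (iii) in the regime $x\in[-2+\eps,2-\eps]$. When $x\to -2$ with $n^{2/3}(2+x)\to\infty$, the local spacing near $x$ is $1/(n\rho_\sc(x))\sim (n(2+x)^{1/2})^{-1}$ and the sine-kernel approximation is still valid (we remain many mean spacings from the edge), but the cutoff now occurs when the cumulative count $n\int_{-2}^{x}\rho_\sc \sim n(2+x)^{3/2}$ is exhausted, replacing $\log n$ by $\log\!\big(n(2+x)^{3/2}\big)$.

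The main obstacle will be obtaining Plancherel--Rotach asymptotics for $K_n$ that are simultaneously uniform across the bulk and into the edge-transition regime $2+x \gg n^{-2/3}$, and quantitative enough -- with explicit control of both the oscillatory remainder on the diagonal and the off-diagonal Gaussian decay -- to pin down the constant $\frac{1}{2\pi^2}$ and the precise argument $n(2+x)^{3/2}$ of the logarithm in (iii), rather than merely $O(\log n)$. The oscillatory cancellation needed for the sharp $O(\log n/n)$ error in (i) is a secondary technical point, handled by summation-by-parts on the Christoffel--Darboux kernel.
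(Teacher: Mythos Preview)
The paper does not give its own proof of this theorem: it is stated with an explicit citation to Gustavsson \cite{Gus} and used as a black box input to the subsequent concentration arguments (Corollary~\ref{conc} onward). So there is no ``paper's proof'' to compare against; the relevant comparison is with Gustavsson's original argument.

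Your outline is essentially the standard route taken in \cite{Gus}. For the mean one writes $\E N_I(W_n)=\int_{nI}K_n(y,y)\,dy$ and inputs Plancherel--Rotach asymptotics; for the variance one uses the projection-kernel identity $\Var N_I = \iint_{s\in nI,\ t\notin nI}|K_n(s,t)|^2\,ds\,dt$ and reduces to a truncated sine-kernel integral whose logarithmic divergence produces the $\tfrac{1}{2\pi^2}\log(\cdot)$ term. Your identification of the cutoff scale $n(2+x)^{3/2}$ as the number of eigenvalues between $x$ and the nearer spectral edge is correct: after rescaling, the sine-kernel window on the side of $x$ facing $-2$ has length $\sim n(2+x)^{3/2}$ while the other side has length $\sim n$, and the double integral picks out the logarithm of the smaller of the two.

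Two small cautions. First, the phrase ``Gaussian-type off-diagonal decay'' is not quite accurate for $K_n$: by Christoffel--Darboux the decay in $|s-t|$ is only of order $|s-t|^{-1}$ in the bulk (which is exactly what makes the variance integral diverge logarithmically); the Gaussian decay of the Hermite functions governs only what happens outside $[-2n,2n]$. Second, the sharp $O(\log n/n)$ error in (i) is genuinely delicate---Gustavsson obtains it by tracking the oscillatory term in the one-point density precisely rather than by a generic van der Corput bound---so you should expect that step to require the full Plancherel--Rotach expansion with explicit phase, not just a leading-order statement.
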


By combining these estimates with a well-known inequality of Bennett \cite{Ben62} 
(see \cite{TVcont} for details)  we obtain a concentration estimate for $N_I(W_n)$ in the GUE case:

\begin{corollary}[Concentration for GUE]\label{conc} Let $M_n$ be drawn from GUE, let $W_n := \frac{1}{\sqrt{n}} M_n$, and let $I$ be an interval.  Then one has
$$ \P( |N_I(W_n) - n \int_I \rho_\sc(y)\ dy| \geq T) \ll \exp( - c T )$$
for all $T \gg \log n$.
\end{corollary}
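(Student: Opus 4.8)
The plan is to leverage the two facts about GUE recorded above: the determinantal structure \eqref{etain}, which expresses $N_I(W_n)$ as a sum $\sum_i \eta_i$ of jointly independent $\{0,1\}$-valued random variables, and the sharp mean and variance estimates of Theorem \ref{mevar}. Once $N_I(W_n)$ is written as a sum of independent, uniformly bounded summands with a known (logarithmic) variance, the desired exponential tail is precisely what Bennett's inequality \cite{Ben62} provides in the regime where the deviation dominates the variance.

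Concretely, I would first center: writing $p_i := \E \eta_i$ and $\sigma^2 := \Var N_I(W_n) = \sum_i p_i(1-p_i)$, we have $N_I(W_n) - \E N_I(W_n) = \sum_i (\eta_i - p_i)$, a sum of independent random variables each of absolute value at most $1$, so Bennett's inequality gives
\[
 \P\bigl( |N_I(W_n) - \E N_I(W_n)| \ge t \bigr) \le 2\exp\bigl( -\sigma^2\, h(t/\sigma^2) \bigr), \qquad h(u) := (1+u)\log(1+u) - u .
\]
Next I would insert the estimates from Theorem \ref{mevar}. For a general interval $I = [a,b]$ one writes $N_{[a,b]} = N_{[-\infty,b]} - N_{[-\infty,a]}$ and uses the spectral reflection symmetry $\lambda_i(W_n) \equiv -\lambda_{n+1-i}(W_n)$ of GUE to move any endpoint lying to the right of the bulk to the left edge; then parts (i)--(iii) of Theorem \ref{mevar}, together with the elementary determinantal identity $\Var N_{[a,b]}(W_n) \le \Var N_{[-\infty,a]}(W_n) + \Var N_{[-\infty,b]}(W_n)$, yield the uniform bounds $|\E N_I(W_n) - n\int_I \rho_\sc(y)\,dy| = O(\log n)$ and $\sigma^2 = O(\log n)$ valid for every interval $I$. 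Since we work in the range $T \gg \log n$ (with a sufficiently large implied constant), for large $n$ the mean displacement is at most $T/2$, so $\{|N_I(W_n)-n\int_I\rho_\sc|\ge T\}\subseteq\{|N_I(W_n)-\E N_I(W_n)|\ge T/2\}$, and moreover $t:=T/2$ satisfies $t/\sigma^2 \ge e^2$. For such $u = t/\sigma^2$ one has $h(u) \ge u\log(1+u) - u \ge u(\log u - 1) \ge u$, hence $\sigma^2 h(t/\sigma^2) \ge t$, and the displayed bound becomes $\le 2\exp(-T/2)$, which is the claimed $\ll \exp(-cT)$.

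The only genuinely delicate point is the uniform variance bound $\Var N_I(W_n) = O(\log n)$ over \emph{all} intervals, including those straddling or lying beyond the spectral edge, to which Theorem \ref{mevar}(iii) does not literally apply (it requires an endpoint in $[-2,2-\eps]$ with $n^{2/3}(2+x)\to\infty$). This is handled by the reflection symmetry together with the extreme-edge behaviour: for an endpoint $x$ within $O(n^{-2/3})$ of $-2$ (or below), $N_{[-\infty,x]}(W_n)$ counts only $O(1)$ eigenvalues on average, and its variance is $O(1)$, as one sees from the determinantal formula $\Var N_{[-\infty,x]}(W_n)=\int_{-\infty}^x\!\!\int_x^\infty |K_n(s,t)|^2\,ds\,dt$ and the Plancherel--Rotach/Airy asymptotics for the Hermite kernel $K_n$. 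Everything else is routine; I would only emphasise that it is essential to use Bennett's inequality in its full (sub-Poissonian) strength rather than its Gaussian Bernstein consequence, since the latter, at the scale $t\asymp\sigma^2\asymp\log n$, would only give a bound of the form $n^{-c}$, whereas the range $T\gg\log n$ puts us in the regime $t\gg\sigma^2$ where Bennett's $h$-function delivers the true exponential decay.
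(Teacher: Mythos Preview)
Your proposal is correct and takes essentially the same approach as the paper, which merely states that the corollary follows by combining the determinantal representation \eqref{etain} and the mean/variance estimates of Theorem \ref{mevar} with Bennett's inequality (referring to \cite{TVcont} for details); you have filled in exactly those details. One small remark: the inequality $\Var N_{[a,b]} \le \Var N_{(-\infty,a]} + \Var N_{(-\infty,b]}$ that you invoke is not quite an ``identity'' but follows from the nonnegativity of $\operatorname{Cov}(N_{(-\infty,a]}, N_{(-\infty,b]}) = \int_{-\infty}^{a}\!\int_{b}^{\infty} |K_n(s,t)|^2\,ds\,dt \ge 0$ for the projection kernel $K_n$, which is worth stating explicitly.
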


From the above corollary we see in particular that in the GUE case, one has
$$ N_I(W_n) = n \int_I \rho_\sc(y)\ dy + O(\log^{1+o(1)} n)$$
with overwhelming probability for each fixed $I$, and an easy union bound argument (ranging over all intervals $I$ in, say, $[-3,3]$ whose endpoints are a multiple of $n^{-100}$ (say)) then shows that this is also true uniformly in $I$ as well.

Now we turn from the GUE case to more general Wigner ensembles.  
As already mentioned, there has been much interest in recent years in obtaining concentration results for $N_I(W_n)$ (and for closely related objects, such as the Stieltjes transform $s_{W_n}(z) := \frac{1}{n} \tr (W_n - z)^{-1}$ of $W_n$) for short intervals $I$, due to the applicability of such results to establishing various universality properties of such matrices; see \cite{ESY1, ESY2, ESY3, TVlocal1, TVedge, ESY, EYY, EYY2}.  The previous best result in this direction was by Erd\H{o}s, Yau, and Yin \cite{EYY2} (see also \cite{ekyy} for a variant):

\begin{theorem}\label{ween}\cite{EYY2} Let $M_n$ be a Wigner matrix obeying Condition {\condo}, and let $W_n := \frac{1}{\sqrt{n}} M_n$.  Then, for any interval $I$, one has
\begin{equation}\label{niwna}
\P( |N_I(W_n) - n \int_I \rho_\sc(y)\ dy| \geq T) \ll \exp( - c T^c )
\end{equation}
for all $T \geq \log^{A \log \log n} n$, and some constant $A>0$.
\end{theorem}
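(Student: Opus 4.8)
The plan is to obtain \eqref{niwna} as a consequence of a sufficiently strong \emph{local semicircle law} for the Stieltjes transform $s_n(z) = \frac{1}{n}\tr(W_n-z)^{-1}$, valid with the claimed probability all the way down to spectral scales $\eta$ of order $(\log n)^{A\log\log n}/n$, together with the standard dictionary relating resolvent control to control of the counting function. A pure swapping argument will not suffice here: the Four Moment Theorem would only transfer Corollary \ref{conc} to matrices matching GUE to fourth order, whereas Theorem \ref{ween} assumes only Condition \condo. So the first reduction is to intervals $I = (-\infty,x]$ (a general $N_I$ being a difference of two such, and the stated tail being stable under such differences); then, writing $z = x+i\eta$ and using the identity $\Im s_n(z) = \frac{1}{n}\sum_i \frac{\eta}{(\lambda_i(W_n)-x)^2+\eta^2}$ from the remark after Corollary \ref{deloc}, one shows that a bound $|s_n(z)-s_{\sc}(z)| \le \Psi(z)$ holding with probability $1-O(\exp(-cT^c))$ on a fine grid of $z$ forces $|N_I(W_n) - n\int_I \rho_{\sc}(y)\, dy| \le T$ on the same event, once $\eta$ is taken comparable to $T/n$ up to logarithmic factors; this last step is a routine Helffer--Sj\"ostrand (or contour-integration) estimate.

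The substance is the local law itself. Following the Stieltjes transform method sketched in Section \ref{semi-sec}, I would use the Schur complement identity \eqref{snn}, applied to every diagonal entry of the resolvent rather than just the $nn$ one, to derive the self-consistent equation $s_n(z) \approx \frac{-1}{s_n(z)+z}$ with explicit error equal to an average over $k$ of the fluctuations $\frac{1}{n}X_k^*(\tilde W^{(k)}-z)^{-1}X_k - \frac{1}{n}\tr(\tilde W^{(k)}-z)^{-1}$ (plus the negligible diagonal term $\frac{1}{\sqrt n}\xi_{kk}$), where $\tilde W^{(k)}$ is the $k$-th minor and $X_k$ its off-diagonal column. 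Conditioning on each $\tilde W^{(k)}$ and invoking a concentration inequality for quadratic forms in independent, exponentially decaying variables (Hanson--Wright, or Talagrand's inequality as in \cite{ledoux}), each fluctuation is bounded with a tail of the form $\exp(-cT^c)$; one then feeds this into a stability analysis of the limiting equation \eqref{scz}, keeping track of the fact that the stability degrades like $(\kappa+\eta)^{-1/2}$ as $z$ approaches the spectral edge, $\kappa := 2-|x|$, from outside the bulk. Running a bootstrap in decreasing $\eta$ --- starting from $\eta \sim 1$, where $|s_n-s_{\sc}|$ is trivially small, and propagating the estimate downwards along a discrete grid of $z$ by a continuity argument, so that the union bound over the grid costs only a polynomial factor --- yields simultaneously the eigenvalue rigidity bound \eqref{eigenrigid} and the desired counting-function estimate.

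The main obstacle --- and the source of the awkward threshold $T \ge \log^{A\log\log n}\, n$ --- is squeezing enough cancellation out of the sum of the $n$ fluctuation terms. Estimating each term on its own loses a factor $n^\eps$, giving only the weaker rigidity $\lambda_i(W_n) = \lambda_i^\cl(W_n) + O(n^\eps)$ implicit in \eqref{eigenrigid}; to reach error $(\log n)^{O(\log\log n)}$ one must exploit that these $n$ errors are only weakly correlated, so that their \emph{average} is much smaller than a typical summand. This is the ``fluctuation averaging'' phenomenon, established by a high-moment computation in \cite{EYY}, \cite{EYY2} and by a decoupling argument in \cite{ekyy}, \cite{ekyy2}. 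Each application of this self-improvement gains one logarithmic factor in the attainable scale, and roughly $\log\log n$ iterations are needed to descend from scale $O(1)$ to scale $O(1/n)$ --- which is exactly what produces the $(\log n)^{A\log\log n}$ loss. Once the local law is established at this scale, deducing \eqref{niwna} (together with the attendant mean and variance estimates) is routine, following the same route by which Corollary \ref{conc} was obtained from Theorem \ref{mevar}.
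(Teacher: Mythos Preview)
Your sketch is correct and matches the approach of \cite{EYY2}; note, however, that the present paper does not itself prove Theorem~\ref{ween} but merely cites it, summarising the method in a single sentence (``a delicate analysis of the self-consistent equation of the Stieltjes transform of $W_n$''). Your outline --- Schur complement self-consistent equation, large-deviation bounds on the quadratic-form fluctuations, the fluctuation-averaging mechanism of \cite{EYY}, \cite{EYY2} to gain the extra cancellation, a bootstrap in $\eta$, and finally a Helffer--Sj\"ostrand argument to pass from $s_n(z)$ to $N_I(W_n)$ --- is precisely the architecture of the proof in \cite{EYY2}, and your explanation of the $(\log n)^{A\log\log n}$ threshold as arising from the $O(\log\log n)$ iterations of the self-improvement step is the right heuristic.
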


One can reformulate \eqref{niwna} equivalently as the assertion that
$$ \P( |N_I(W_n) - n \int_I \rho_\sc(y)\ dy| \geq T) \ll \exp(\log^{O(\log \log n)} n) \exp( - c T^c )$$
for all $T>0$.

In particular, this theorem asserts that with overwhelming probability one has
$$N_I(W_n) = n \int_I \rho_\sc(y)\ dy + O( \log^{O(\log\log n)} n )$$
for all intervals $I$.  The proof of the above theorem is somewhat lengthy, requiring a delicate analysis of the self-consistent equation of the Stieltjes transform of $W_n$.

Comparing this result with the previous results for the GUE case, we see that there is a loss of a double logarithm $\log \log n$ in the exponent.  It has turned out that using the swapping method one can remove this double logarithmic loss, at least under an additional vanishing moment assumption\footnote{We thank  M. Ledoux for a conversation leading to this study.}

\begin{theorem}[Improved concentration of eigenvalues]\label{main}  \cite{TVcont} Let $M_n$ be a Wigner matrix obeying Condition {\bf C0}, and let $W_n := \frac{1}{\sqrt{n}} M_n$.  Assume that $M_n$ matches moments with GUE to third order off the diagonal (i.e. $\Re \xi_{ij}, \Im \xi_{ij}$ have variance $1/2$ and third moment zero).  Then, for any interval $I$, one has
$$
\P( |N_I(W_n) - n \int_I \rho_\sc(y)\ dy| \geq T) \ll n^{O(1)} \exp( - c T^c )$$
for any $T > 0$.
\end{theorem}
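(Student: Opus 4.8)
Throughout set $\mu := n\int_I \rho_\sc(y)\,dy$, $X := N_I(W_n)-\mu$, and let $W'_n := \frac1{\sqrt n}M'_n$ with $M'_n$ drawn from GUE; we may assume $I=[-\infty,x]$ is a half--line, the general interval following by subtracting two such counting functions. The first step is to shrink the range of $T$. If $T$ is at most a suitable power of $\log n$, then $n^{O(1)}\exp(-cT^c)\gg 1$ and there is nothing to prove. At the other extreme, if $T$ is large enough that the prefactor $\exp(\log^{O(\log\log n)}n)$ in the reformulation of Theorem \ref{ween} is $\le \exp(cT^c/2)$ --- which happens once $T\ge C_2\exp(C_3(\log\log n)^2)$ --- then Theorem \ref{ween} itself already gives the claim, in fact without the $n^{O(1)}$ factor. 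Hence it suffices to treat $T$ in the sub-polynomial window $(\log n)^{O(1)}\le T\le C_2\exp(C_3(\log\log n)^2)$; in particular $T=n^{o(1)}$ there, and by Theorem \ref{ween} (for $M_n$) and Corollary \ref{conc} (for GUE) both $|X|$ and $|N_I(W'_n)-\mu|$ are $\le \exp(O((\log\log n)^2))$ with overwhelming probability.

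In this window the plan is to transfer the sharp GUE concentration of Corollary \ref{conc} by comparing high moments. On the GUE side, integrating the tail bound of Corollary \ref{conc} gives $\E\,|N_I(W'_n)-\mu|^{2k}\ll (C\max(k,\log n))^{2k}$ for all $k\ge1$. I would aim to prove the matching estimate
$$\E\,|X|^{2k}\ \ll\ (C\max(k,\log n))^{2k}+n^{O(1)}$$
for all $k$ up to (a large multiple of) $\exp(C_3(\log\log n)^2)$; since $(Ck)^{2k}\gg n^{O(1)}$ once $k\ge\log n$, this yields $\E\,|X|^{2k}\ll (Ck)^{2k}$ throughout the relevant range of $k$. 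Markov's inequality applied with $2k$ a suitable multiple of $T$ (legitimate in our window) then gives $\P(|X|\ge T)\ll \exp(-cT)\le \exp(-cT^c)$, which is stronger than what is asked, and adjusting constants finishes the proof.

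The heart of the matter is the moment comparison, which I would attack by the Lindeberg exchange strategy of Section \ref{swap-sec}: replace the entries of $M_n$ by those of $M'_n$ one at a time (with the Hermitian partner), and control the change of $\E\,|X|^{2k}$ at each of the $O(n^2)$ off-diagonal and $O(n)$ diagonal swaps. Since $N_I$ is neither a smooth function of the matrix nor a function of boundedly many eigenvalues, one first replaces $N_I=\tr 1_I(W_n)$ by the proxy $\tr\phi(W_n)=\sum_i\phi(\lambda_i(W_n))$, where $\phi$ is a smooth cutoff of $1_I$ at the short scale $\delta\asymp T/n$; sandwiching $1_{[-\infty,x]}$ between cutoffs $\phi_\pm$ offset by $O(\delta)$ lets these proxies control $X$ up to an acceptable $O(n\delta)=O(T)$ slack (together with the discrepancy $|\mu-n\int\phi_\pm\rho_\sc|$, also $O(T)$). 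On each swap one then runs the Hadamard-variation and Taylor-expansion machinery of the proof of Theorem \ref{theorem:Four}: writing $W_n(t)$ for the one-parameter interpolation of the swapped entry, expand $t\mapsto \E[\,|\sum_i\phi(\lambda_i(W_n(t)))-\mu|^{2k}\mid W_n(0)]$ to fourth order, using $\frac{d^j}{dt^j}\lambda_i(W_n(t))=O(n^{-j/2+o(1)})$ for $j\le5$ --- valid with overwhelming probability under the $Q_i$-bounds from the level-repulsion estimate Theorem \ref{gap-special} (this is where Condition \condo\ enters), delocalisation (Corollary \ref{deloc}), and the local semicircle law (Theorem \ref{lsc}). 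The first three moments of the swapped entry, which match by hypothesis, then contribute nothing, and the mismatched fourth moment enters only through the fourth-order Taylor coefficient; because $\phi$ averages over $\asymp T$ eigenvalues this coefficient is $n^{-2+o(1)}$ times a polynomial factor in $k$ and $T$, so telescoping over the $O(n^2)$ swaps stays inside the $n^{O(1)}$ budget, while the $O(n)$ diagonal swaps need only an $n^{-3/2+o(1)}$ bound and hence only two matched moments.

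I expect this swapping step to be the main obstacle, on two counts. First, making rigorous the bound ``fourth-order Taylor coefficient $=O(n^{-2+o(1)}\,\mathrm{poly}(k,T))$'' requires estimating $\frac{d^j}{dt^j}\sum_i\phi(\lambda_i(W_n(t)))$ for $j\le5$, and because $\delta\asymp T/n$ can dip below the local-semicircle scale $n^{-1+\eps}$ one is forced to invoke Theorem \ref{ween} to count the $\lesssim T+\exp(O((\log\log n)^2))$ eigenvalues in the transition window of $\phi$; this reintroduces a $\log\log n$ loss which must be shown to be harmless, which I would do by running the estimate as a bootstrap (Gr\"onwall-type) inequality for $\E\,|X(t)|^{2k}$ along the swap path --- bounding cross-terms by H\"older and closing the inequality using the already-known GUE endpoint --- rather than as a crude term-by-term bound, so that the $n^{O(1)}$ slack and the dominant $(Ck)^{2k}$ term absorb the loss. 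Second, the familiar technical nuisances of the Four Moment Theorem proof --- discretising the swap parameter $t$ to a fine net before the union bound, and handling the exceptional event where the $Q_i$-bound fails by a continuity argument --- must be carried over essentially verbatim from \cite{TVlocal1}.
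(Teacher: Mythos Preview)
Your overall strategy matches the paper's: reduce to controlling high moments $\E|N_I(W_n)-\mu|^{2k}$, use the sharp GUE bound of Corollary \ref{conc} as the reference, and transfer it by a Lindeberg swap with three matching moments. The reduction of the range of $T$ and the use of Markov with $k\asymp T$ are also in line with the intended argument.

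Where you diverge is in the \emph{proxy} for $N_I$ and, correspondingly, in the swapping machinery. You propose $\sum_i\phi(\lambda_i(W_n))$ with a short-scale cutoff $\phi$, and then run the Hadamard variation formulae for individual eigenvalues as in the proof of Theorem \ref{theorem:Four}, invoking the $Q_i$-bounds from Theorem \ref{gap-special}. The paper explicitly avoids this route: it takes as proxy a certain integral of the Stieltjes transform of $W_n$, and carries out the swapping at the level of the resolvent $(W_n-z)^{-1}$ using resolvent identities (in the spirit of the Green's-function Four Moment Theorem of \cite{EYY}). This buys two things. First, resolvent expansions in a single entry are algebraically explicit and need no level-repulsion input, so the $Q_i$-truncation, the continuity argument in $t$, and the delicate fifth-order Hadamard formulae all disappear. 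Second, the Stieltjes proxy automatically lives at an imaginary height $\eta$ above the real axis, which provides the smoothing you are trying to arrange by hand with $\phi$; in particular one never has to work at scales below $n^{-1+\eps}$, and the preliminary input from the self-consistent equation can be kept crude rather than relying on Theorem \ref{ween} in its full strength.

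Your approach is not obviously wrong, but the obstacles you flag are real: pushing the Hadamard expansion through $\sum_i\phi(\lambda_i)$ with $\phi$ varying on scale $T/n$ forces you to sum fifth-order eigenvalue derivative bounds over $\asymp T$ indices while simultaneously controlling the $Q_i$ for all of them, and the resulting bookkeeping (with the bootstrap/Gr\"onwall closure you describe) is substantially heavier than what the Stieltjes-transform route requires. If you want to stay closer to the paper, replace $\tr\phi(W_n)$ by the imaginary part of $\int \tr(W_n-E-i\eta)^{-1}\,dE$ over a suitable contour, and swap entries using the resolvent identity $(A+\Delta-z)^{-1}=(A-z)^{-1}-(A-z)^{-1}\Delta(A+\Delta-z)^{-1}$ expanded to fourth order; the three-moment hypothesis then kills the first three orders directly, and the fourth-order remainder is controlled by the a~priori resolvent bounds coming from the local semicircle law, with no gap theorem needed.
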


This estimate is phrased for any $T$, but the bound only becomes non-trivial when $T \gg \log^C n$ for some sufficiently large $C$.   In that regime, we see that this result removes the double-logarithmic factor from Theorem \ref{ween}.  In particular, this theorem implies that with overwhelming probability one has
$$ N_I(W_n) = n \int_I \rho_\sc(y)\ dy + O(\log^{O(1)} n)$$
for all intervals $I$; in particular, for any $I$, $N_I(W_n)$ has variance $O(\log^{O(1)} n)$.

\begin{remark}  As we are assuming $\Re(\xi_{ij})$ and $\Im(\xi_{ij})$ to be independent, the moment matching condition simplifies to the constraints that $\E \Re(\xi_{ij})^2 = \E \Im(\xi_{ij})^2 = \frac{1}{2}$ and $\E \Re(\xi_{ij})^3 = \E \Im(\xi_{ij})^3 = 0$.  However, it is possible to extend this theorem to the case when the real and imaginary parts of $\xi_{ij}$ are not independent. \end{remark}

\begin{remark}
The constant $c$ in the bound in  Theorem \ref{main} is quite decent in several cases. For instance, if the atom variables of $M_n$ are Bernoulli or have sub-gaussian tail, then we can set $c= 2/5-o(1)$ by optimizing our arguments (details omitted).  If we assume 4 matching moments rather than 3, then we can set $c=1$,  matching the bound in Corollary \ref{conc}.  It is an  interesting question to determine the best value of $c$. The value of $c$ in \cite{EYY} is implicit and rather small.
\end{remark} 

The proof of the above theorem is different from that in \cite{EYY2} in that it only uses a relatively crude analysis of the self-consistent equation to obtain some preliminary bounds on the Stieltjes transform and on $N_I$ (which were also essentially implicit in previous literature).  Instead, the bulk of the argument relies on using the Lindeberg swapping strategy to deduce concentration of $N_I(W_n)$ in the non-GUE case from the concentration results in the GUE case provided by Corollary \ref{conc}.  In order to keep the error terms in this swapping under control, three matching moments\footnote{Compare with Theorem \ref{theorem:Four}.}.  We need one less moment here because we are working at ``mesoscopic'' scales (in which the number of eigenvalues involved is much larger than $1$) rather than at ``microscopic'' scales. 

Very roughly speaking, the main idea of the argument is to show that high moments such as
$$ \E |N_I(W_n) - n \int_I \rho_\sc(y)\ dy|^k$$
are quite stable (in a multiplicative sense) if one swaps (the real or imaginary part of) one of the entries of $W_n$ (and its adjoint) with another random variable that matches the moments of the original entry to third order.  For technical reasons, however, we do not quite manipulate $N_I(W_n)$ directly, but instead work with a proxy for this quantity, namely a certain integral of the Stieltjes transform of $W_n$.  As observed in \cite{EYY}, the Lindeberg swapping argument is quite simple to implement at the level of the Stieltjes transform (due to the simplicity of the resolvent identities, when compared against the rather complicated Taylor expansions of individual eigenvalues used in \cite{TVlocal1}).

As a corollary, we obtain the following rigidity of eigenvalues result, improving upon \eqref{eigenrigid} when one has a matching moment hypothesis:

\begin{corollary}[Concentration of eigenvalues] \label{cor:rigid} Let $M_n$ be a Wigner matrix obeying Condition \condo, and let $W_n := \frac{1}{\sqrt{n}} M_n$.  Assume that $M_n$ matches moments with GUE to three order off the diagonal and second order on the diagonal.  
Then for any $i$ in the bulk

$$ \P(  |\lambda_i(W_n) - \gamma_i| \geq T/n ) \ll n^{O(1)} \exp(-cT^c)$$
for any $T>0$, where the \emph{classical location} $\gamma_i \in [-2,2]$ is defined by the formula
$$ \int_{-2}^{\gamma_i} \rho_\sc(y)\ dy = \frac{i}{n}.$$

\end{corollary}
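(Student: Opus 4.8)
The plan is to deduce Corollary~\ref{cor:rigid} from Theorem~\ref{main} by the standard duality between the eigenvalue counting function and the location of an individual eigenvalue. Fix $i$ in the bulk, say $\delta n \le i \le (1-\delta)n$, so that $\gamma_i$ lies in a compact subinterval of $(-2,2)$ and hence $\rho_\sc(\gamma_i) \asymp 1$. For the upper tail, note that the event $\lambda_i(W_n) \ge \gamma_i + T/n$ is contained in the event $N_{(-\infty,\gamma_i+T/n)}(W_n) \le i-1$. By the defining relation for $\gamma_i$ and a first-order Taylor expansion of $x \mapsto \int_{-2}^x \rho_\sc$,
\[
 n\int_{-\infty}^{\gamma_i+T/n}\rho_\sc(y)\,dy = i + \rho_\sc(\gamma_i)\,T + O(T^2/n),
\]
so as long as $T \le c_1 n$ for a suitable small $c_1 = c_1(\delta) > 0$ (to keep the curvature error below half the main term), the right side is at least $i + \tfrac12\rho_\sc(\gamma_i) T$. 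Thus on the event in question one has $|N_{(-\infty,\gamma_i+T/n)}(W_n) - n\int \rho_\sc| \ge \tfrac12\rho_\sc(\gamma_i)\,T$, and Theorem~\ref{main} bounds its probability by $n^{O(1)}\exp(-c(\tfrac12\rho_\sc(\gamma_i)T)^c) \ll n^{O(1)}\exp(-c'T^{c'})$. The lower tail $\lambda_i(W_n) \le \gamma_i - T/n$ is handled symmetrically: it is contained in $\{N_{(-\infty,\gamma_i-T/n]}(W_n) \ge i\}$, and since $n\int_{-\infty}^{\gamma_i-T/n}\rho_\sc = i - \rho_\sc(\gamma_i) T + O(T^2/n) \le i - \tfrac12\rho_\sc(\gamma_i)T$ in the same range of $T$, Theorem~\ref{main} again applies. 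Summing the two estimates gives the claimed bound for all $T \le c_1 n$.

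It remains to dispose of the range $T > c_1 n$. Here $\gamma_i + T/n$ exceeds $2+\eps$ for some fixed $\eps = \eps(\delta,c_1) > 0$, so $\{\lambda_i(W_n) \ge \gamma_i + T/n\} \subseteq \{\|W_n\|_{op} \ge 2+\eps\}$, and likewise for the lower tail; one then invokes the standard operator-norm large-deviation bounds for Wigner matrices under Condition~\condo\ (cf.\ the references on norms of Wigner matrices mentioned in Section~\ref{semi-sec}), which decay faster than any $\exp(-cT^c)$ in this regime. These are routine and I would omit the details.

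The only point requiring genuine care is the bookkeeping that converts a fluctuation of $N_I(W_n)$ on the scale $T$ into a displacement of $\lambda_i(W_n)$ on the scale $T/n$ — in particular, restricting to $T = O(n)$ so that the $O(T^2/n)$ curvature term in the expansion of $\int \rho_\sc$ genuinely stays below $\tfrac12\rho_\sc(\gamma_i)T$, and peeling off the complementary range via the operator-norm bound. Once this is arranged, the corollary is a direct consequence of Theorem~\ref{main}. (As usual, one also records that the quantity $\gamma_i$ here coincides with $\lambda_i^\cl(W_n)$, so the conclusion is precisely the promised sharpening of \eqref{eigenrigid} under the extra third-moment matching hypothesis.)
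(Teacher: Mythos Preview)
Your approach is exactly the intended one: the paper does not give a separate proof of Corollary~\ref{cor:rigid} but simply records it as a consequence of Theorem~\ref{main}, and the passage from a concentration estimate on $N_I(W_n)$ to one on $\lambda_i(W_n)$ via the duality $\{\lambda_i \ge x\} = \{N_{(-\infty,x)} \le i-1\}$ is standard. The substance of your argument is correct.

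There is, however, one small slip in the bookkeeping. You choose $c_1$ \emph{small} so that the second-order Taylor error $O(T^2/n)$ is dominated by $\tfrac12\rho_\sc(\gamma_i)T$; but then you assert that for $T>c_1 n$ one has $\gamma_i+T/n>2+\eps$. That is false: with $c_1$ small and $\gamma_i$ in the bulk, $\gamma_i+c_1$ is still well inside $(-2,2)$, so there is an intermediate range $c_1 n<T\lesssim n$ left uncovered. The fix is easy and you have essentially written it already: either (i) replace the Taylor expansion by the direct lower bound $n\int_{\gamma_i}^{\gamma_i+T/n}\rho_\sc \ge \rho_0 T$ valid whenever $\gamma_i+T/n$ stays in a fixed compact subinterval of $(-2,2)$ (which it does for $T/n$ up to a constant depending only on $\delta$), or (ii) use monotonicity of the tail in $T$ to bound the probability in the intermediate range by its value at $T=c_1 n$, absorbing the discrepancy into the constants $c$ in the final estimate. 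Only once $T/n$ exceeds, say, $4$ does the operator-norm argument kick in. With this adjustment your proof goes through.
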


This corollary improves \cite[Theorem 2.2]{EYY2} as it allows $T$ to be as small as $\log^{O(1)} n$, instead of $\log^{O(\log \log n)} n$, under the extra third moment assumption.  In particular, in the Bernoulli case, this shows that the variance of the bulk eigenvalues is of order $\log^{O(1)} n/ n$. We believe that this is sharp, up to the hidden constant in $O(1)$.

This corollary also significantly improves \cite[Theorem 29]{TVlocal1}.  (As a matter of fact, the original proof 
of this theorem has a gap in it;  see Appendix A for a further discussion.)  One can have analogous results for the edge case, under four moment assumption; see \cite{TVcont} for details.

\section{Open questions}  
  
While the universality of many spectral statistics of Wigner matrices have now been established, there are still several open questions remaining.  Some of these have already been raised in earlier sections; we collect some further such questions in this section.

In one direction, one can continue generalising the class of matrices for which the universality result holds, for instance by lowering the exponent $C_0$ in Condition \condone, and allowing the entries $\xi_{ij}$ of the Wigner matrix to have different variances $\sigma^2_{ij}$, or for the Wigner matrix to be quite sparse.  For recent work in these directions, see \cite{ekyy}, \cite{ekyy2}, \cite{EYY}, \cite{EYY2}.  With regards to the different variances case, one key assumption that is still needed for existing arguments to work is a \emph{spectral gap hypothesis}, namely that the matrix of variances $(\sigma_{ij}^2)_{1 \leq i,j \leq n}$ has a significant gap between its largest eigenvalue and its second largest one; in addition, for the most complete results one also needs the variances to be bounded away from zero.  This omits some interesting classes of Wigner-type matrices, such as those with large blocks of zeroes.  However, the spectral statistics of $p+n \times p+n$ matrices of the form
$$ \begin{pmatrix} 0 & M \\ M^* & 0 \end{pmatrix}$$
for rectangular ($p \times n$) matrices with iid matrices $M$ are well understood, as the problem is equivalent to that of understanding the singular values of $M$ (or the eigenvalues of the covariance matrix $MM^*$); in particular, analogues of the key tools discussed here (such as the four moment theorem, the local semicircle law, and heat flow methods) are known \cite{TVlocal3}, \cite{ESYY}; this suggests that other block-type variants of Wigner matrices could be analysed by these methods.  A related problem would be to understand the spectral properties of various self-adjoint polynomial combinations of random matrices, e.g. the commutator $AB-BA$ of two Wigner matrices $A,B$.  The global coarse-scale nature of the spectrum for such matrices can be analysed by the tools of free probability \cite{voiculescu}, but there are still very few rigorous results for the local theory.

Another direction of generalisation is to consider generalised Wigner matrices whose entries have non-zero mean, or equivalently to consider the spectral properties of a random matrix $M_n+D_n$ that is the sum of an ordinary Wigner matrix $M_n$ and a deterministic Hermitian matrix $D_n$.  Large portions of the theory seem amenable to extension in this direction, although the global and local semicircular law would need to be replaced by a more complicated variant (in particular, the semicircular distribution $\rho_\sc$ should be replaced by the free convolution of $\rho_\sc$ with the empirical spectral distribution of $D_n$, see \cite{voiculescu, voi2}).

In yet another direction, one could consider non-Hermitian analogues of these problems, for instance by considering the statistics of eigenvalues of iid random matrices (in which the entries are not constrained to be Hermitian, but are instead independent and identically distributed).  The analogue of the semicircular law in this setting is the \emph{circular law}, which has been analysed intensively in recent years (see \cite{TVsur} for a survey).  There are in fact a number of close connections between the Hermitian and non-Hermitian ensembles, and so it is likely that the progress in the former can be applied to some extent to the latter.

Another natural question to ask is to see if the universality theory for Wigner ensembles can be unified in some way with the older, but very extensively developed, universality theory for invariant ensembles (as covered for instance in \cite{Deibook}).  Significant progress in this direction has recently been achieved in \cite{bourgade}, in which heat flow methods are adapted to show that the local spectral statistics of $\beta$-ensembles are asymptotically independent of the choice of potential function (assuming some analyticity conditions on the potential).  This reduces the problem to the gaussian case when the potential is quadratic, which can be handled by existing methods, similarly to how the methods discussed here reduce the statistics of general Wigner matrices to those of invariant ensembles such as GUE or GOE.  Note though that these techniques do not provide an independent explanation as to why these invariant ensembles have the limiting statistics they do (e.g. governed by the sine determinantal process in the bulk, and the Airy determinantal process in the edge, in the case of GUE); for that, one still needs to rely on the theory of determinantal processes.

Returning now to Wigner matrices, one of the major limitations of the methods discussed here is the heavy reliance on the hypothesis that the (upper-triangular) entries are jointly independent; even weak coupling between entries makes many of the existing methods (such as the swapping technique used in the Four Moment Theorem, or the use of identities such as \eqref{snn}) break down.  A good test case would be the asymptotic statistics of the adjacency matrices of random regular graphs, where the fixed degree $d$ is a constant multiple of $n$, such as $n/2$.  This is essentially equivalent to a Wigner matrix model (such as the real symmetric Bernoulli matrix ensemble) in which the row and column sums have been constrained to be zero.  For this model, the global semicircular law and eigenvector delocalisation has recently been established for such matrices; see \cite{dimitriu}, \cite{tran}.

Recall that the Central limit theorem for the log-determinant of non-Hermitian matrices requires only two moment matching. However, in the Hermitian case, 
Corollary \ref{clt-wigner} requires four matching moments. We believe that this requirement can be weakened. For instance, the central limit theorem must hold for random Bernoulli matrices. 

Another interesting problem is to determine the distribution of bulk eigenvalues of a random Bernoulli matrix.  This matrix has only three matching moments, but perhaps a central limit theorem like Theorem \ref{gust} also holds here. We have proved \cite{TVcont} that the variance of any bulk eigenvalue is $\log^{O(1)} n/n$. A good first step would be to determine the right value of the hidden constant in $O(1)$.

\appendix

\section{Some errata}\label{erratum}

In this appendix we report an issue with the \emph{Three Moment Theorem}, which first appeared as the second conclusion of \cite[Theorem 15]{TVlocal1}.  We extract this theorem from that paper for reference:

\begin{theorem}[Three Moment Theorem]\label{theorem:main} There is a small positive constant $c_0$ such that for every $0 < \eps < 1$ and $k \geq 1$ the following holds.
 Let $M_n = (\zeta_{ij})_{1 \leq i,j \leq n}$ and $M'_n = (\zeta'_{ij})_{1 \leq i,j \leq n}$ be
 two random matrices satisfying {\bf C0}. Assume furthermore that for any $1 \le  i<j \le n$, $\zeta_{ij}$ and
 $\zeta'_{ij}$  match to order $3$.  Set $A_n := \sqrt{n} M_n$ and $A'_n := \sqrt{n} M'_n$,
  and let $G: \R^k \to \R$ be a smooth function obeying the derivative bounds
\begin{equation}\label{G-deriv-improv}
|\nabla^j G(x)| \leq n^{-C j c_0}
\end{equation}
for all $0 \leq j \leq 5$ and $x \in \R^k$, and some sufficiently large absolute constant $C>0$.
 Then for any $\eps n \le i_1 < i_2 \dots < i_k \le (1-\eps)n$, and for $n$ sufficiently large depending on $\eps,k$ (and the constants $C, C'$ in the definition of Condition {\bf C0}) we have
\begin{equation} \label{eqn:approximation-improv}
 |\E ( G(\lambda_{i_1}(A_n), \dots, \lambda_{i_k}(A_n))) -
 \E ( G(\lambda_{i_1}(A'_n), \dots, \lambda_{i_k}(A'_n)))| \le n^{-c_0}.
\end{equation}
\end{theorem}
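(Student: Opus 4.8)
The plan is to run the Lindeberg exchange strategy used for Theorem \ref{theorem:Four} and Theorem \ref{trunc} (Section \ref{sketch}), keeping track of the extra smallness furnished by the stronger derivative hypothesis \eqref{G-deriv-improv}. As there, one swaps the off-diagonal entries $\zeta_{pq}$ (with their conjugates $\zeta_{qp}$) one at a time and the diagonal entries separately; writing $A(t) = A(0) + t A'(0)$ with $A(0)$ of Wigner type with one entry zeroed and independent of the swapped entry, it suffices to gain $O(n^{-2-c})$ per off-diagonal swap (and $O(n^{-3/2-c})$ per diagonal swap) for some fixed $c>0$. Taylor-expanding $F(t) := \E G(\lambda_{i_1}(A(t)),\dots,\lambda_{i_k}(A(t)))$ to fourth order in $t$ and using the eigenvalue derivative bounds $\frac{d^j}{dt^j}\lambda_{i_l}(A(t)) = O(n^{-j/2+O(c_0)})$, valid (with overwhelming probability) on the event that the $Q_{i_l}(A(t))$ are $\ll n^{O(c_0)}$ as in \eqref{lat}, one sees that every Taylor coefficient $f_j$ of $F$ carries at least one derivative of $G$ and hence a factor $n^{-Cc_0}$, so $f_j = O(n^{-j/2-Cc_0+O(c_0)})$. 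Setting $t$ equal to $\zeta_{pq}$ and to $\zeta'_{pq}$, taking expectations, and using that $A(0)$ --- hence each $f_j$ --- is independent of the swapped entry, the $j=1,2,3$ contributions cancel by the three matching moments; the surviving $j=4$ term is $O(n^{-2-Cc_0+O(c_0)})$, and together with the Taylor remainder $O(n^{-5/2+O(c_0)})$ this is $O(n^{-2-c})$ once $C$ is large. Summing over the $O(n^2)$ off-diagonal and $O(n)$ diagonal swaps gives \eqref{eqn:approximation-improv}.

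The obstacle --- and the defect in the argument of \cite{TVlocal1} --- is that the eigenvalue Taylor bounds above require the event $\CE_t := \{ Q_{i_l}(A(t)) \ll n^{O(c_0)} \text{ for all } l \}$, so one must restrict to $\CE_t$ without destroying the gain coming from \eqref{G-deriv-improv}. In the four-moment argument this restriction is effected by multiplying $G$ by a smooth cutoff in the variables $Q_{i_l}$; but any such cutoff has derivatives of size $n^{O(c_0)}$, so the truncated function obeys only $|\nabla^j(G\cdot\text{cutoff})| \le n^{O(c_0)}$ --- the four-moment regime --- which does not absorb $O(n^2)$ swaps when only three moments match. Thus the cutoff device cannot be used, and with it the scheme of \cite{TVlocal1} breaks down.

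To repair matters one must control the $Q_{i_l}(A(t))$ a priori rather than through $G$. The plan is: fixing a swap and conditioning on $A(0)$, discretise $t$ to a fine net (multiples of $n^{-100}$, say, for $|t| \le n^{O(c_0)}$), show that for all $A(0)$ outside an exceptional set the event $\CE_t$ holds at every net point --- combining the gap property (Theorem \ref{gap}, its special case Theorem \ref{gap-special} under Condition \condo, or, for smooth atom distributions, the level-repulsion estimates of \cite{ESY3}) for the eigenvalues near $\lambda_{i_l}(A(t))$ with the local semicircle law (Theorem \ref{lsc}) for those far away --- and extend to all $t$ of interest by continuity; on the exceptional set, and on the (exponentially unlikely, by Condition \condo) event that the swapped entry exceeds $n^{O(c_0)}$, use the crude bound $|G| \le 1$ afforded by the $j=0$ case of \eqref{G-deriv-improv}. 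The delicate point is that these exceptional events must be controlled along the whole swapping path simultaneously and with a probability small enough to survive summation over $O(n^2)$ swaps; this is exactly the step that was overlooked, and it succeeds cleanly whenever strong enough level-repulsion input is available --- which is the sense in which the Three Moment Theorem can still be established in most cases. Alternatively, for the applications in \cite{TVlocal1, TVlocal2, TVlocal3} one may bypass Theorem \ref{theorem:main} altogether and use Proposition \ref{approx-moment} together with the heat-flow results of Section \ref{heatflow}, since three matching moments already make the Ornstein--Uhlenbeck evolution of $M_n$ to time $n^{-1+\eps}$ an approximate fourth-moment match.
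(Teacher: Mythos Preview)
You correctly identify the defect in the original argument: the truncation $\tilde G = G \cdot \prod_l \eta(Q_{i_l})$ used to localise to the event $\{Q_{i_l} \ll n^{O(c_0)}\}$ destroys the improved derivative bounds \eqref{G-deriv-improv}, so one cannot simply re-run the four-moment scheme. This matches the paper's own diagnosis.

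However, your proposed repair --- controlling $\CE_t = \{Q_{i_l}(A(t)) \ll n^{O(c_0)}\}$ a priori via the gap theorem or level repulsion, and using $|G| \le 1$ on its complement --- does not close in the bulk. For the telescoping sum over $O(n^2)$ swaps to converge, one needs $\P(\CE_t^c) = O(n^{-2-c})$ at each swap, but Theorem \ref{gap} only gives $\P(|\lambda_{i+1}-\lambda_i| \le n^{-c_0}) \ll n^{-c_1}$ for some unspecified $c_1(c_0) > 0$, with no guarantee that $c_1 > 2$. Level repulsion from \cite{ESY3} does not help either: in the bulk it yields at best $\P(\text{gap} < \eps) \ll \eps^\beta$ for a fixed $\beta$, which for $\eps = n^{-O(c_0)}$ with $c_0$ small is only a small negative power of $n$. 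The paper's ``First fix'' makes essentially this point, noting that the strengthened gap \eqref{lain} is impossible in the bulk where the mean spacing is comparable to $1$; that fix is reserved for the edge, not for Theorem \ref{theorem:main} as stated.

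The paper's actual proof in the bulk (the ``Third fix'') is entirely different and abandons the Lindeberg swap. One invokes eigenvalue rigidity \eqref{eigenrigid} from \cite{EYY2}: with overwhelming probability $\lambda_{i_l}(A_n) = \lambda_{i_l}^{\cl}(A_n) + O_\eps(n^{\eps})$, and then $|\nabla G| \le n^{-Cc_0}$ with $C$ large forces
\[
G(\lambda_{i_1}(A_n),\dots,\lambda_{i_k}(A_n)) = G(\lambda_{i_1}^{\cl},\dots,\lambda_{i_k}^{\cl}) + o(n^{-c_0}),
\]
and likewise for $A'_n$. Since the classical locations are deterministic and identical for both ensembles, \eqref{eqn:approximation-improv} follows immediately. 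Note this does not even use three matching moments (only the second-moment normalisation underlying rigidity), so in the bulk the Three Moment Theorem is really subsumed by rigidity rather than salvaged by a refined swap. Your final remark about bypassing the theorem via Proposition \ref{approx-moment} and heat flow is relevant to some \emph{applications}, but it does not prove Theorem \ref{theorem:main} itself: three matching moments between $M_n$ and $M'_n$ impose no constraint on their fourth moments, so there is no approximate fourth-moment match between them to exploit.
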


Unfortunately, the proof given of the Three Moment Theorem in \cite[\S 3.3]{TVlocal1} is not correct, although as we shall see shortly, the theorem can be proven by other means, and the proof itself can be repaired in some other instances.  The closely related Four Moment Theorem, which is of more importance in most applications, is not affected by this problem.

The issue affects some subsequent papers \cite{TVlocal2}, \cite{TVlocal3}, \cite{TVnec}, \cite{Joh2}, \cite{wang} where the Three Moment Theorem (or a variant thereof) was used, and we will discuss the alterations needed to correct those papers below.

\subsection{The issue}

In the proof of the Four Moment Theorem in \cite[\S 3.3]{TVlocal1}, the analogue of the function $G$ that appears in Theorem \ref{theorem:main} is replaced with a truncated variant
\begin{equation}\label{tildeg}
 \tilde G( \lambda_{i_1},\ldots,\lambda_{i_k}, Q_{i_1},\ldots,Q_{i_k}) := G(\lambda_{i_1},\ldots,\lambda_{i_k}) \prod_{j=1}^k \eta( Q_{i_j} )
\end{equation}
(see \cite[\S 3.3]{TVlocal1} for notation).  This truncated function $\tilde G$ then obeys the derivative estimates
$$ |\nabla^j \tilde G| \ll n^{c_0}$$
for $j=0,\ldots,5$.  These bounds are sufficient to establish the Four Moment Theorem (the first part of \cite[Theorem 15]{TVlocal1}), by invoking the first part of \cite[Proposition 46]{TVlocal1}.  However, for the purposes of establishing the Three Moment Theorem (the second part of \cite[Theorem 15]{TVlocal1}), these bounds are not sufficient to be able to invoke the second part of \cite[Proposition 46]{TVlocal1}, and so the proof breaks down in this case.

A similar issue also affects the Three Moment Theorem in the bulk of the spectrum for covariance matrices (\cite[Theorem 6]{TVlocal3}), the Three Moment Theorem at the edge of the spectrum of Wigner matrices \cite[Theorem 1.5]{TVlocal2}, and the Three Moment Theorem at the edge of the spectrum for covariance matrices \cite[Theorem 1.5]{wang}.  

%\subsection{First fix: use the Four Moment Theorem}

%The cheapest fix (which avoids the use of more advanced susbequent results such as the eigenvalue rigidity result from \cite{EYY2}) is simply to replace every %invocation of a Three Moment Theorem with an invocation of the corresponding Four Moment Theorem, which is not affected by the issue described above.  The price %one pays for this is that an additional moment condition will need to be placed on the entries of the Wigner matrix.  In order to solve a moment matching problem, this %may sometimes require an additional hypothesis that the atom distributions are supported on at least three points.

\subsection{First fix: strengthen the gap property}\label{fix-four}

The three moment theorem is most useful when applied to eigenvalues at the edge.  In this case, we can 
reprove this theorem by modifying the argument in \cite{TVlocal1} as follows.  

The proof of the Four Moment Theorem relies on the \emph{gap property} (Theorem \ref{gap}).  
Suppose temporarily that one could improve this property to the bound
\begin{equation}\label{lain}
 |\lambda_{i+1}(A_n) - \lambda_i(A_n)| \geq n^{Cc_0}
\end{equation}
with high probability.  This is not actually possible in the bulk of the spectrum, where the mean eigenvalue spacing is comparable to $1$ (by the Wigner semicircle law), but is certainly plausible at the edge of the spectrum (where the mean eigenvalue spacing is $n^{1/3}$).  Assuming this improved gap property, one can fix the original proof of the Three Moment Theorem in \cite{TVlocal1}.  To do this, one replaces the quantities $Q_i$ introduced in \cite[\S 3.2]{TVlocal1} with the modified variant $\tilde Q_i := Q_i / n^{Cc_0/10}$, and replaces the function $\tilde G$ in \eqref{tildeg} with the variant
$$
 \tilde G( \lambda_{i_1},\ldots,\lambda_{i_k}, \tilde Q_{i_1},\ldots,\tilde Q_{i_k}) := G(\lambda_{i_1},\ldots,\lambda_{i_k}) \prod_{j=1}^k \eta( \tilde Q_{i_j} / n^{Cc_0/100} ).$$
The function $\tilde G$ then obeys the improved bounds
$$ |\nabla^j \tilde G| \ll n^{-Cc_0 j/100}.$$
As such, it is possible to adapt the arguments in \cite{TVlocal1} (replacing $Q_i$ with $\tilde Q_i$, and also making some other minor changes
to constants such as $C$) to recover the Three Moment Theorem.

It remains to establish \eqref{lain} with high probability at the edge of the spectrum.  In the case when $A_n$ comes from the GUE ensemble and $i$ is at the edge of the spectrum (thus $i=O(1)$ or $i=n-O(1)$), one can obtain \eqref{lain} with high probability for sufficiently small $c_0$ by a careful inspection of the analysis used to establish the Tracy-Widom law for GUE \cite{TWbook} (or alternatively, one can use the more general analysis in \cite{Joh2}).  By using the \emph{truncated} Three Moment Theorem at the edge (i.e. \cite[Proposition 6.1]{TVlocal2}, after replacing $Q_{i_j}$ with $\tilde Q_{i_j}$ as indicated above), we can then deduce \eqref{lain} with high probability for any Wigner matrix (obeying Condition {\bf C1} for a sufficiently high moment) that matches GUE to third order.  Using this, one can recover the Three Moment Theorem at the edge for any pair $M_n, M'_n$ of matrices that match GUE to third order (i.e. have vanishing third moment). 

\begin{theorem}[Three Moment Theorem]\label{theorem:main1} There is a small positive constant $c_0$ such that for every $0 < \eps < 1$ and $k \geq 1$ the following holds.
 Let $M_n = (\zeta_{ij})_{1 \leq i,j \leq n}$ be  a  random matrix satisfying {\bf C0}  where $\zeta_{ij}, i \neq j$ have vanishing third moment. 
  Set $A_n := \sqrt{n} M_n$ and $A'_n := \sqrt{n} M'_n$, where $A'_n$ is GUE, 
  and let $G: \R^k \to \R$ be a smooth function obeying the derivative bounds
\begin{equation}\label{G-deriv-improv1}
|\nabla^j G(x)| \leq n^{-C j c_0}
\end{equation}
for all $0 \leq j \leq 5$ and $x \in \R^k$, and some sufficiently large absolute constant $C>0$.
 Then for any $ i_1 < i_2 \dots < i_k =O(1) $, and for $n$ sufficiently large depending on $\eps,k$ (and the constants $C, C'$ in the definition of Condition {\bf C0}) we have
\begin{equation} \label{eqn:approximation-improv1}
 |\E ( G(\lambda_{i_1}(A_n), \dots, \lambda_{i_k}(A_n))) -
 \E ( G(\lambda_{i_1}(A'_n), \dots, \lambda_{i_k}(A'_n)))| \le n^{-c_0}.
\end{equation}

The same statement holds at the other edge of the spectrum. 
\end{theorem}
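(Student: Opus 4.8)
The plan is to follow the strategy sketched in Section~\ref{fix-four}: upgrade the gap property at the edge to the far stronger separation estimate \eqref{lain}, and then run a rescaled version of the Lindeberg exchange argument of \cite[\S 3.3]{TVlocal1} in which the truncation variable $Q_i$ carries extra negative powers of $n$, so that the associated cutoff function can match the sharper derivative hypothesis \eqref{G-deriv-improv1}. First I would establish \eqref{lain} for GUE: for an edge index $i = O(1)$ the rescaled top eigenvalues $n^{-1/3}(\lambda_j(A_n) - 2n)$ converge jointly to the Airy point process, whose points are almost surely distinct, and inspecting the quantitative determinantal estimates behind the Tracy--Widom law (see \cite{TWbook}, or the more flexible framework of \cite{Joh2}) should yield that $|\lambda_{i+1}(A_n) - \lambda_i(A_n)| \geq n^{Cc_0}$ holds with probability $1 - o(1)$ for $c_0$ small enough, uniformly over the $O(1)$ edge indices.

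Next I would transfer \eqref{lain} from GUE to the matrix $M_n$ of the theorem, which matches GUE to third order. Two consecutive edge eigenvalues being $n^{Cc_0}$-close forces the quantity $Q_i$ to be large, so the bad event $\{|\lambda_{i+1}(A_n)-\lambda_i(A_n)| \le n^{Cc_0}\}$ can be detected by a bounded test function of the $Q$-variables obeying \eqref{geo}-type bounds; applying the \emph{truncated} Three Moment Theorem at the edge --- the analogue of Theorem~\ref{trunc} proved as \cite[Proposition 6.1]{TVlocal2}, but with each $Q_{i_j}$ replaced by the rescaled variable $\tilde Q_{i_j} := Q_{i_j}/n^{Cc_0/10}$ --- together with the GUE bound just obtained, yields \eqref{lain} for any Wigner matrix obeying Condition~{\condo} and matching GUE to third order. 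This is the same bootstrap used to deduce Theorem~\ref{gap} from Theorem~\ref{gap-special} via Theorem~\ref{trunc} in Section~\ref{sketch}, now run at the sharper edge scale $n^{1/3}$.

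With \eqref{lain} in hand for both $M_n$ and the GUE matrix $M'_n$, I would then revisit the proof of \cite[Theorem 15]{TVlocal1} and replace the truncated test function \eqref{tildeg} by
$$ \tilde G( \lambda_{i_1},\dots,\lambda_{i_k}, \tilde Q_{i_1},\dots,\tilde Q_{i_k}) := G(\lambda_{i_1},\dots,\lambda_{i_k}) \prod_{j=1}^k \eta( \tilde Q_{i_j} / n^{Cc_0/100} ). $$
Since \eqref{lain} keeps $\tilde Q_{i_j}$ comfortably below $1$ with high probability at the edge, this cutoff is compatible with the actual spectrum, and the extra negative powers of $n$ in $\tilde Q$ propagate to $|\nabla^j \tilde G| \ll n^{-Cc_0 j/100}$ for $0 \le j \le 5$ --- exactly the hypotheses needed to invoke the \emph{second} conclusion of \cite[Proposition 46]{TVlocal1} (rather than merely its first), which is the per-entry Taylor-expansion-and-swap estimate that drives the proof. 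Running this estimate over the $O(n^2)$ off-diagonal swaps (and $O(n)$ diagonal ones), telescoping, and discarding the $\tilde Q$-truncation using \eqref{lain} once more, gives \eqref{eqn:approximation-improv1}; the statement at the other edge follows by applying the above to $-M_n$ and $-M'_n$.

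The main obstacle is the first step: one needs \eqref{lain} for GUE with a genuinely effective probability bound ($1-o(1)$, ideally $1 - O(n^{-c})$), holding uniformly over all $O(1)$ edge indices. The Airy limit only supplies simplicity of the \emph{limiting} process, so converting this into a non-asymptotic lower bound on the smallest edge gap requires working directly with the determinantal kernel and its trace-class estimates rather than with the distributional convergence alone. Everything downstream --- the truncated-theorem bootstrap and the constant bookkeeping ($Q_i \rightsquigarrow \tilde Q_i$, adjustments to $C$) in the exchange argument --- is a routine modification of the arguments in \cite{TVlocal1, TVlocal2} once this gap estimate is available.
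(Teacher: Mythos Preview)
Your proposal is correct and follows essentially the same route as the paper: establish the strengthened edge gap \eqref{lain} first for GUE via the determinantal/Tracy--Widom analysis, bootstrap it to $M_n$ via the truncated Three Moment Theorem at the edge with the rescaled $\tilde Q_{i_j}$, and then rerun the \cite{TVlocal1} exchange argument with the modified cutoff $\tilde G$ so that $|\nabla^j \tilde G| \ll n^{-Cc_0 j/100}$ and the second part of \cite[Proposition 46]{TVlocal1} applies. You have also correctly isolated the one non-routine ingredient, namely the quantitative (non-asymptotic) lower bound on edge gaps for GUE, which the paper likewise attributes to a careful reading of \cite{TWbook} or \cite{Joh2}.
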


This method allows one reprove one of the main applications of the three moment theorem, \cite[Theorem 1.13]{TVedge},  which establishes 
the universality of spectral statistics  at the edge of a Wigner matrix with vanishing third moment. 

In Theorem \ref{theorem:main1}, the vanishing third moment assumption was needed only to guarantee \eqref{lain}, since GUE satisfies this bound. 
In a recent paper \cite{Joh2} Johansson considered gauss divisible  matrices. While not stated explicitly, his analysis seems to 
show that the convergence rate of the density of states of gauss divisible matrices at the edge is polynomial, and this would imply that  \eqref{lain}
also holds for this larger class of matrices. As a consequence, one would be able to remove the 
vanishing third moment assumption and Theorem \ref{theorem:main1} and also obtain  \cite[Theorem 1.4]{Joh2}.

In a similar manner, one can recover 
 the covariance matrix universality in \cite{wang} (after one verifies the required polynomial convergence rate of the density of states at the edge for Wishart ensembles (or for gauss divisible covariance matrices), which can be extracted from the analysis in \cite{BenP}). 

\subsection{Second fix: use the Knowles-Yin Two Moment Theorem}

 Another way to handle the above applications is to use 
  the very recent Two Moment Theorem at the edge of Knowles and Yin \cite[Theorem 1.1]{knowles}.  The conclusion of this theorem is stronger than that of  the Three Moment Theorem for the edge, for it only requires two matching moments rather than three (and it can also control the eigenvectors as well as the eigenvalues, cf. \cite{TVeigenvector}).  However, its assumption is more restricted, as the  hypotheses on $G$ are stronger  than \eqref{G-deriv-improv1}, being adapted to the scale $n^{1/3}$ of the mean eigenvalue spacing at the edge, rather than to the scale $n^{C c_0}$. 
   As such, the Two Moment Theorem at the edge can be used as a substitute for the Three Moment Theorem to establish universality for the distribution of $k$ eigenvalues at the edge for any fixed $k=O(1)$, as was done in \cite{TVlocal3} and \cite{Joh2}.  In principle, this also recovers the analogous universality results for covariance matrices in \cite{wang}, although this would require extending the Knowles-Yin Two Moment Theorem in \cite{knowles} to the covariance matrix case; this is almost certainly a routine matter, but is not currently in the literature.

\subsection{Third fix: use of eigenvalue rigidity}

This fix applies to the original Three Moment Theorem in the bulk for Wigner matrices (Theorem \ref{theorem:main}).  One can reprove this theorem using the powerful \emph{eigenvalue rigidity} result \eqref{eigenrigid}.  From this bound and \eqref{G-deriv-improv} (with sufficiently large $C$) we conclude that
$$ 
G(\lambda_{i_1}(A_n), \dots, \lambda_{i_k}(A_n))) =
G(\lambda_{i_1}^{\operatorname{cl}}(A_n), \dots, \lambda_{i_k}^{\operatorname{cl}}(A_n))) + o(n^{-c_0})$$
with overwhelming probability, and similarly for $A'_n$.  Since $A_n$ and $A'_n$ have the same classical locations, the three moment theorem follows.

This fix can be used to resolve all the places where the Three Moment Theorem from \cite{TVlocal1} are invoked.  Of course, in all such cases stronger results can be deduced directly from the subsequent results of \cite{EYY2}.  

In the case when the third moment is zero, one can also use Corollary \ref{cor:rigid} instead of \eqref{eigenrigid}. 
The point is that the proof of this corollary 
(which relies on the swapping method)  is much simpler than that of \eqref{eigenrigid}. 

In principle, this fix should also repair the Three Moment Theorem in the bulk for covariance matrices \cite[Theorem 6]{TVlocal3}.  However, the required eigenvalue rigidity result for covariance matrices in the bulk is not yet in the literature, though it can almost certainly be established\footnote{There is a technical issue, namely that the augmented matrix associated to a covariance matrix does not quite obey the spectral gap hypothesis from \cite[Assumption (B)]{EYY2} due to a double eigenvalue at $1$, but this is likely to be an artificial obstruction, given that much of the rest of the universality theory for Wigner matrices is already known to extend to the covariance matrix case (see e.g. \cite{TVlocal3}, \cite{wang}, \cite{ESYY}).} by modifying the arguments in \cite{EYY2}.  In any event, this case is not of particular importance because, to the authors knowledge, there are no applications of the Three Moment Theorem in the bulk for covariance matrices that are explicitly stated in the literature at this time.

\begin{remark}\label{concat}
In \cite{TVlocal3}, it was observed that the exponential decay condition ${\bf C0}$ in the Four Moment Theorem could be relaxed to a finite moment condition ${\bf C1}$ for a sufficiently large moment exponent.  If one could obtain a similar relaxation for the eigenvalue rigidity result in \cite{EYY2}, then the above argument could then be used to recover the Three Moment Theorem in this case also.  In fact, one can achieve this using the Four Moment Theorem.  If $M_n$ is a matrix obeying {\bf C1}, then it can be matched to fourth order (or to approximate fourth order, as in \cite{ERSTVY}) to a matrix $M'_n$ obeying {\bf C0}.  The latter matrix obeys \eqref{eigenrigid} with overwhelming probability; applying the Four Moment Theorem (cf. the proof of \cite[Theorem 32]{TVlocal1}) we conclude that the former matrix also obeys \eqref{eigenrigid} with high probability, and one can then adapt the preceding arguments.
\end{remark}

At the edge of the spectrum, the eigenvalue rigidity property is adapted to a broader scale than that used in the Three Moment Theorem; with respect to the fine-scale normalisation $A_n$, the latter is adapted to the scale $n^{1/3}$ at the edge whilst the former is adapted to the scale $n^{Cc_0}$.  As such, the eigenvalue rigidity property cannot be directly used as a fix for applications at the edge.

\subsection{Corrections to specific papers}

In this section we record specific statements in a number of papers that need repairing, and which of the above fixes are applicable to these papers.

\begin{itemize}
\item The second part of \cite[Theorem 15]{TVlocal1} (i.e. the Three Moment Theorem)  is true, but the proof requires a result that occurred after the publication of this paper, namely the eigenvalue rigidity result in \cite{EYY2}).  References to this part of the theorem in \cite[\S 3.3]{TVlocal1} should  be deleted.

\item Similarly for  \cite[Theorem 32]{TVlocal1}. This theorem is true, but  its proof 
 requires using the rigidity result in  \cite{EYY2} or Corollary \ref{cor:rigid}. 
 
 \item Similarly for the asymptotic for the determinant in
\cite[Theorem 34]{TVlocal1}. In fact, by using the results in \cite{EYY2}, one can even drop the third moment assumption.

\item The second part of \cite[Theorem 6]{TVlocal3} (i.e. the Three Moment Theorem) should be used with care. This statement would hold if one 
has a rigidity result for eigenvalues of Wishart matrices (similar to those in \cite{EYY2}). We believe the proof of such a result would be a routine, but rather tedious, 
modification of the proof in \cite{EYY2}.  If we assume that the third moment vanishes (which is the case in many application), then it suffices 
to obtain an analogue of Corollary \ref{cor:rigid} and the proof of this would be simpler.

\item The eigenvalue localisation result in \cite[Theorem 1.4]{TVnec} now requires $M_n$ to match GUE to fourth order rather than third order.  Again, the moment matching hypotheses can be dropped entirely if one is willing to use the results in \cite{EYY2}.

\item The second part of \cite[Theorem 1.13]{TVlocal2} should add the additional hypothesis that $M_n, M'_n$ match GUE to third order (or at least obey the improved gap condition \eqref{lain} with high probability).  The proof of this theorem then needs to be modified as per Section \ref{fix-four}.

\item In the proof of \cite[Theorem 1.4]{Joh2}, one needs to replace the invocation of the Three Moment Theorem with the more complicated argument indicated in Section \ref{fix-four}.  Alternatively, one can use here the Two Moment Theorem of Knowles and Yin \cite{knowles}.

\item The second part of \cite[Theorem 1.5]{wang} should add the additional hypothesis that $M_n, M'_n$ match the Wishart ensemble to third order (or at least obey the improved gap condition \eqref{lain} with high probability).  The proof of this theorem then needs to be modified as per Section \ref{fix-four}.
\end{itemize}


\begin{thebibliography}{10}


\bibitem{AGZ} G. Anderson, A. Guionnet and O. Zeitouni, An introduction to random matrices,  Cambridge Studies in Advanced Mathematics, 118. Cambridge University Press, Cambridge, 2010.

\bibitem{Bai93a}
Z. D. Bai, Convergence rate of expected spectral distributions of large random matrices. I. Wigner matrices, \emph{Ann. Probab.} \textbf{21} (1993), no. 2, 625–-648. 

\bibitem{Bai93b} 
Z. D. Bai, Convergence rate of expected spectral distributions of large random matrices. II. Sample covariance matrices. \emph{Ann. Probab.} \textbf{21} (1993), no. 2, 649–-672

\bibitem{BDJ}
J. Baik, P. Deift, and K. Johansson, On the distribution of the longest
increasing subsequence of random permutations. \emph{J. Amer. Math. Soc.} \textbf{12} (1999),
1119--1178.

\bibitem{bakry}
D. Bakry, M. \'Emery, \emph{Diffusions hypercontractives}, in: S\'eminaire de probabilit\'es, XIX, 1983/84,
1123 Lecture Notes in Mathematics, Springer, Berlin, 1985, 177–-206.

\bibitem{BSbook}
Z. D. Bai and J. Silverstein, Spectral analysis of large dimensional random matrices, Mathematics Monograph Series \textbf{2}, Science Press, Beijing 2006.

\bibitem{bai-yin}
Z.D. Bai, Y.Q. Yin, {Necessary and sufficient conditions for almost sure convergence of the largest eigenvalue of a Wigner matrix},
\emph{Ann. Probab.} \textbf{16} (1988), no. 4, 1729–-1741. 

\bibitem{BenP} G. Ben Arous and S. P\'ech\'e, Universality of local eigenvalue statistics for some sample covariance matrices, {\it Comm. Pure Appl. Math.} 58 (2005), no. 10, 1316--1357.


\bibitem{Ben62} 
G. Bennett, \emph{Probability Inequalities for the Sum of Independent Random Variables}, Journal of the American Statistical Association \textbf{57} (1962), 33-–45.



\bibitem{berry}
A. Berry, The Accuracy of the Gaussian Approximation to the Sum of Independent Variates, \emph{Trans. Amer. Math. Soc.} \textbf{49} (1941), 122-–136.

\bibitem{BI} P. Bleher and A. Its, Semiclassical asymptotics of orthogonal polynomials, Riemann-Hilbert problem, and universality in the matrix model, {\it Ann. of Math.} (2) \textbf{150} (1999), no. 1, 185--266.

\bibitem{bourgade}
P. Bourgade, L. Erd{\H o}s, H. T. Yau, Universality of General $\beta$-Ensembles, {\it arXiv:1104.2272}

\bibitem{brezin}
E. Br\'ezin, S. Hikami, S., An extension of level-spacing universality, {\it cond-mat/9702213}.

\bibitem{CLe} O. Costin and  J. Lebowitz, Gaussian fluctuations in random matrices, {\it Phys. Rev. Lett. } 75 (1) (1995) 69--72.



\bibitem{CTV} K. Costello, T. Tao and V. Vu, Random
symmetric matrices are alsmot surely singular, {Duke Math. J.} \textbf{135} (2006), 395--413.

\bibitem{Costello} K. Costello, Bilinear and quadratic variants on the Littlewood-Offord problem, {\it preprint}. 

\bibitem{Chat} S. Chatterjee, A generalization of the Lindenberg principle,  \emph{Ann. Probab.} \textbf{34} (2006), no. 6, 2061–-2076. 

\bibitem{DV}
S. Dallaporta, V. Vu, A Note on the Central Limit Theorem for the Eigenvalue Counting Function of Wigner Matrices, {\it to appear Elect Comm. Prob.}

\bibitem{DKMVZ} P. Deift, T. Kriecherbauer, K.T.-R.  McLaughlin,  S. Venakides
and X. Zhou, Uniform asymptotics for polynomials orthogonal with respect to varying exponential weights and applications to universality questions in random matrix theory,
{\it Comm. Pure Appl. Math.}  \textbf{52} (1999), no. 11, 1335--1425.

\bibitem{Deibook} P. Deift, Orthogonal polynomials and random matrices: a Riemann-Hilbert approach. Courant Lecture Notes in Mathematics, 3. New York University, Courant Institute of Mathematical Sciences, New York; American Mathematical Society, Providence, RI, 1999.


\bibitem{DG} P. Deift and D Gioev, Random matrix theory: Invariant ensembles and Universality,  Courant Lecture Notes in Mathematics, 18. New York University, Courant Institute of Mathematical Sciences, New York; American Mathematical Society, Providence, RI, 2009.


\bibitem{Deisur}
P. Deift, Universality for mathematical and physical systems.
\emph{International Congress of Mathematicians} Vol. I,  125--152,
Eur. Math. Soc., Z\"urich, 2007.

\bibitem{Dembo} A.~Dembo, {On random determinants}, \emph{Quart. Appl. Math.}  \textbf{47} (1989), no. 2, 185--195.



\bibitem{dimitriu}
I. Dumitriu, S. Pal, Sparse regular random graphs: spectral density and eigenvectors, {\it arXiv:0910.5306}

\bibitem{Doering}
H. Doering, P. Eichelsbacher, {Moderate deviations for the eigenvalue counting function of Wigner matrices}, {\it arXiv:1104.0221}.


\bibitem{Dys} F. Dyson, Correlations between eigenvalues of a random matrix,
{\it Comm. Math. Phys.}  19 1970 235--250.

\bibitem{DC}  R. Delannay, G. Le Caer,  Distribution of the determinant of a random real-symmetric matrix 
from the Gaussian orthogonal ensemble, {\it  Phys. Rev. E } 62, 1526Ð1536 (2000). 

\bibitem{edel}
A. Edelman, Eigenvalues and condition numbers of random matrices, \emph{SIAM J. Matrix Anal.
Appl.} \textbf{9} (1988), 543–-560.

\bibitem{Erd} L. Erd\H os, Universality of Wigner random matrices: a Survey of Recent Results, {\it arXiv:1004.0861}.

\bibitem{ekyy}
L. Erd\H os, A. Knowles, H.-T. Yau, J. Yin, {Spectral Statistics of Erd\H os-R\'enyi Graphs I: Local Semicircle Law}, {\it arXiv:1103.1919}.

\bibitem{ekyy2}
L. Erd\H os, A. Knowles, H.-T. Yau, J. Yin, {Spectral statistics of Erd\H os-R\'enyi graphs II: eigenvalue
spacing and the extreme eigenvalues}, {\it arXiv:1103.3869}.

\bibitem{ERSY}  L. Erd\H{o}s, J. Ramirez,
  B. Schlein and  H-T. Yau, Universality of sine-kernel for Wigner matrices with a small Gaussian perturbation, {\it arXiv:0905.2089}.

\bibitem{EPRSY}
L. Erd\H{o}s, S. Peche, J. Ramirez,
  B. Schlein and H.-T. Yau, Bulk universality for Wigner matrices, {\it arXiv:0905.4176}

\bibitem{ERSTVY}
L. Erd\H{o}s, J. Ramirez,
  B. Schlein, T. Tao, V. Vu, and  H.-T. Yau, Bulk universality for Wigner hermitian matrices with subexponential decay, {\it arxiv:0906.4400}, {\it To appear in Math. Research Letters}.

\bibitem{ESY1}
L. Erd\H{o}s, B. Schlein and  H.-T. Yau,
Semicircle law on short scales and delocalization of eigenvectors for Wigner random matrices. {\it Ann. Probab.} \textbf{37} (2009), 815-852 .

\bibitem{ESY2}  L. Erd\H{o}s, B. Schlein and H-T. Yau, Local semicircle law and complete delocalization for Wigner random matrices,  \emph{Comm. Math. Phys.} \textbf{287} (2009), no. 2, 641–655. 

\bibitem{ESY3}
L. Erd\H{o}s, B. Schlein and  H.-T. Yau, Wegner estimate and level repulsion for Wigner random matrices.
Int. Math. Res. Notices \textbf{2010} (2010), 436--479.

\bibitem{ESY}
L. Erd\H{o}s, B. Schlein and  H.-T. Yau, Universality of Random Matrices and Local Relaxation Flow, {\it arXiv:0907.5605}

\bibitem{ESYY}
L. Erd\H{o}s, B. Schlein, H.-T. Yau and J. Yin, The local relaxation flow approach to universality of the
local statistics for random matrices. {\it arXiv:0911.3687}

\bibitem{EY}
L. Erd\H{o}s and  H.-T. Yau, A comment on the Wigner-Dyson-Mehta bulk universality conjecture for Wigner matrices. {\it arXiv:1201.5619}

\bibitem{EYY}
L. Erd\H{o}s, H.-T. Yau, and J. Yin, Bulk universality for generalized Wigner matrices. {\it arXiv:1001.3453}


\bibitem{EYY2}
L. Erd\H{o}s, H.-T.Yau, and J. Yin, Rigidity of Eigenvalues of Generalized Wigner Matrices. {\it arXiv:1007.4652}

\bibitem{esseen}
C-G. Esseen, On the Liapunoff limit of error in the theory of probability, \emph{Arkiv f\"ur matematik}, astronomi och fysik A28 (1942), 1-–19.

\bibitem{FS} O. Feldheim and S.  Sodin,  A universality result for the smallest eigenvalues of certain sample covariance matrices, {\it Geom. Funct. Anal. } 20 (2010), no. 1, 88--123.

\bibitem{rains}
P. J. Forrester, E. M. Rains, Interrelationships between orthogonal, unitary and symplectic matrix
ensembles, \emph{Random matrix models and their applications}, Math. Sci. Res. Inst. Publ., vol. 40, Cambridge
Univ. Press, Cambridge, 2001, pp. 171–-207.

\bibitem{For} 
P. J. Forrester,  Log-gases and random matrices. London Mathematical Society Monographs Series, 34. Princeton University Press, Princeton, NJ, 2010.

\bibitem{FT}  G.~E.~Forsythe and J.~W.~Tukey, {The extent of $n$ random unit vectors}, \emph{Bull.~Amer.~Math.~Soc.}~\textbf{58} (1952), 502. 

\bibitem {FK} Z, F\"uredi and J. Koml\'os,
The eigenvalues of random symmetric matrices, {\it Combinatorica}
{\bf 1}  (1981), no. 3, 233--241.




\bibitem{gin}
J. Ginibre, {Statistical ensembles of complex, quaternion, and real matrices}, 
\emph{J. Mathematical Phys.} \textbf{6} (1965), 440-–449. 



\bibitem{G2} V.~L.~Girko, {The central limit theorem for random determinants} (Russian), translation in \emph{Theory Probab.~Appl.~}  \textbf{24}  (1979), no. 4, 729--740. 

\bibitem{Girko} V.~L.~Girko, {A Refinement of the central limit theorem for random determinants} (Russian), translation in \emph{Theory Probab.~Appl.~} \textbf{42} (1998), no. 1, 121--129. 

\bibitem{Gbook} V.~L.~Girko, {Theory of random determinants}, Kluwer Academic Publishers, 1990.

\bibitem{Goodman} N. R. Goodman, {Distribution of the determinant of a complex Wishart distributed matrix}, \emph{Annals of Statistics} \textbf{34} (1963), 178--180.



\bibitem{Alice}
A. Guionnet, Grandes matrices al\'eatoires et th\'eor\`emes d'universalit\'e, \emph{S\'eminaire BOURBAKI}. Avril 2010. 62\`eme ann\'ee, 2009-2010, no 1019.


\bibitem{Gus}  J.  Gustavsson,  Gaussian fluctuations of eigenvalues in the GUE,
 {\it Ann. Inst. H. Poincar\'e Probab. Statist.} 41 (2005), no. 2, 151--178.
 
 \bibitem{Hand} The Oxford handbook of random matrix theory, edited by Akeman, Baik and Di Fancesco, Oxford 2011. 

\bibitem{harish}
Harish-Chandra, Differential operators on a semisimple Lie algebra, \emph{Amer. J. Math.}, \textbf{79}, (1957) 87--120.


\bibitem{HKPV}
J. Ben Hough, M. Krishnapur, Y. Peres, B. Vir\'ag, Determinantal processes and independence. Probab. Surv. \textbf{3} (2006), 206–-229.

\bibitem{Jiang-2006} T. Jiang. How Many Entries of A Typical Orthogonal Matrix Can Be Approximated By Independent Normals?, \emph{Ann. Probab.} \textbf{34}  (2006), 1497--1529.

\bibitem{Jiang} T. Jiang, The Entries of Haar-invariant Matrices from the Classical Compact Groups, \emph{Journal of Theoretical Probability}, \textbf{23} (2010), 1227--1243. 


\bibitem{JMM} M. Jimbo, T.  Miwa, Tetsuji, Y.  Mori and M.  Sato,  Density matrix of an impenetrable Bose gas and the fifth Painlev\'e transcendent,
{\it  Phys. D  1.}  (1980), no. 1, 80--158.


\bibitem{Joh1} K. Johansson, Universality of the local spacing
distribution in certain ensembles of Hermitian Wigner matrices,
{\it Comm. Math. Phys.}  215 (2001), no. 3, 683--705.

\bibitem{Joh2} K. Johansson, Universality for certain Hermitian Wigner matrices under weak moment conditions, {\it preprint}. re

\bibitem{KS}
N. Katz, P. Sarnak, Random matrices, Frobenius eigenvalues, and monodromy. American Mathematical Society Colloquium Publications, 45. American Mathematical Society, Providence, RI, 1999.


\bibitem{Khor}
O. Khorunzhiy, {High Moments of Large Wigner Random Matrices and Asymptotic Properties of the Spectral Norm}, preprint.

\bibitem{knowles}
A. Knowles, J. Yin, Eigenvector Distribution of Wigner Matrices, {\it arXiv:1102.0057}


\bibitem{krasovsky}
I. V. Krasovsky, {Correlations of the characteristic polynomials in the Gaussian unitary ensemble or a singular Hankel determinant}, \emph{Duke Math. J.} {\bf 139} (2007), no. 3, 581-619. 
\bibitem{KKS} J.~Kahn, J.~Koml\'os and E.~Szemer\'edi, {On the probability that a random $\pm 1$ matrix is singular}, \emph{J.~Amer.~Math.~Soc.}~\textbf{8} (1995), 223--240.

\bibitem{Kom}  J.~Koml\'os, {On the determinant of $(0, 1)$ matrices}, \emph{Studia Sci.~Math.~Hungar.}~\textbf{2} (1967), 7--21. 

\bibitem{Kom1}   J.~Koml\'os, {On the determinant of random matrices}, \emph{Studia Sci.~Math.~Hungar.}~\textbf{3} (1968), 387--399. 
 
\bibitem{keating}
J. P. Keating, N. C. Snaith, Random matrix theory and $\zeta(1/2+it)$, \emph{Comm. Math. Phys.} \textbf{214} (2000), no. 1, 57–-89.

\bibitem{killip}
R. Killip, {Gaussian fluctuations for $\beta$ ensembles}, \emph{Int. Math. Res. Not.} 2008, no. 8, Art. ID rnn007, 19 pp. 



\bibitem{ledoux}
M. Ledoux, The concentration of measure phenomenon, Mathematical Surveys and Monographs, 89. American Mathematical Society, Providence, RI, 2001. 

\bibitem{lindeberg}
J. W. Lindeberg, \emph{Eine neue Herleitung des Exponentialgesetzes in der Wahrscheinlichkeitsrechnung}, Math. Z. \textbf{15} (1922), 211--225.

\bibitem{maltsev}
A. Maltsev, B. Schlein, Average Density of States for Hermitian Wigner Matrices, {\it arXiv:1011.5594}.

\bibitem{maltsev2}
A. Maltsev, B. Schlein, A Wegner estimate for Wigner matrices, {\it arXiv:1103.1473}.

\bibitem{Meh} M.L. Mehta, Random Matrices and the Statistical Theory of Energy Levels, Academic Press, New York, NY, 1967.

\bibitem{nguyen}
H. Nguyen, On the least singular value of random symmetric matrices, {\it arXiv:1102.1476}.

\bibitem{NVdet} H. Nguyen and V. Vu, Random matrix: Law of the determinant, {\it submitted}. 

\bibitem{NRR} H.~Nyquist, S.~O.~Rice and J.~Riordan, {The distribution of random determinants}, Quart. Appl. Math. \textbf{12} (1954), 97--104. 





\bibitem{rourke}
S. O'Rourke, {Gaussian fluctuations of eigenvalues in Wigner random matrices}, \emph{J. Stat. Phys.} \textbf{138} (2010), no. 6, 1045–-1066. 

\bibitem{Pas} L. Pastur, On the spectrum of random matrices,  {\it Teoret. Mat.Fiz.} \textbf{10}, 102-112 (1973).

\bibitem{PS} L. Pastur and M. Shcherbina, Universality of the local eigenvalue statistics for a class of unitary invariant random matrix ensembles, {\it J. Statist. Phys. } \textbf{86} (1997), no. 1-2, 109--147.

\bibitem{SP}  S. Peche and A. Soshnikov, 
Wigner Random Matrices with Non-symmetrically Distributed Entries 
{\it J. Stat. Phys. }, vol. 129, No. 5/6, 857-884, (2007). 



\bibitem{Pre}  A.~Pr\'ekopa, {On random determinants I}, \emph{Studia Sci.~Math.~Hungar.} \textbf{2} (1967), 125--132. 

\bibitem{RV}
M. Rudelson, R. Vershynin, The Littlewood-Offord Problem and invertibility of random matrices,
\emph{Advances in Mathematics} \textbf{218} (2008), 600–-633.


 
\bibitem{Ro} A.~Rouault, {Asymptotic behavior of random determinants in the Laguerre, Gram and Jacobi ensembles}, \emph{Latin American Journal of Probability and Mathematical Statistics} (ALEA),  \textbf{3} (2007) 181--230.

\bibitem{SzT} G.~Szekeres and P.~Tur\'an,  {On an extremal problem in the theory of determinants}, \emph{Math. Naturwiss.~Am.~Ungar.~Akad.~Wiss.}~\textbf{56} (1937), 796--806. 

\bibitem{RV2}
M. Rudelson, R. Vershynin, The least singular value of a random square matrix is $O(n^{-1/2})$, \emph{C.
R. Math. Acad. Sci. Paris} \textbf{346} (2008), no. 15-16, 893–-896.

\bibitem{Ruz}
A. Ruzmaikina, {Universality of the edge distribution of eigenvalues of Wigner random matrices with polynomially decaying distributions of entries}, \emph{Comm. Math. Phys}, \textbf{261} (2006), no. 2, 277–-296. 

\bibitem{schlein}
B. Schlein, Spectral Properties of Wigner Matrices, \emph{Proceedings of the Conference QMath 11}, Hradec Kralove, September 2010.

\bibitem{sinai1} Y. Sinai, A. Soshnikov, Central limit theorem for traces of large symmetric
matrices with independent matrix elements, {\it Bol. Soc. Brazil. Mat.} 29, 1--24 (1998).

\bibitem{sinai2} Y. Sinai, A. Soshnikov, A refinement of Wigner's semicircle law in a neighborhood
of the spectrum edge for random symmetric matrices, {\it Func. Anal. Appl.} 32, 114--131 (1998).



\bibitem{Sos1} A. Soshnikov, Universality at the edge of the spectrum in Wigner random matrices,
{\it Comm. Math. Phys.} 207 (1999), no. 3, 697--733.

\bibitem{Sos2} A. Soshnikov, Gaussian limit for determinantal random point fields, {\it Ann. Probab. } 30 (1) (2002) 171--187.


\bibitem{szego}
G. Szeg\"o, On certain Hermitian forms associated with the Fourier series of a positive function, \emph{Comm. Sem. Math. Univ. Lund} 1952(1952), tome supplementaire, 228--238.



\bibitem{TVsur} T. Tao and V. Vu, From the Littlewood-Offord problem to the circular law: universality of the spectral distribution of random matrices, {\it Bull. Amer. Math. Soc. (N.S.)}  46 (2009), no. 3, 377--396. 

\bibitem{TVlocal2}  T. Tao and V. Vu,  Random matrices: universality of local eigenvalue statistics up to the edge, {\it  Comm. Math. Phys}. 298 (2010), no. 2, 549--572. 

\bibitem{TVedge} T. Tao and V. Vu, Random matrices: universality of local eigenvalue statistics up to the edge, {\it Comm. Math. Phys.}, \textbf{298} (2010), 549--572. 

\bibitem{TVcir} 
T.  Tao and V. Vu,  Random matrices: universality of ESDs and the circular law. With an appendix by Manjunath Krishnapur, {\it  Ann. Probab.} 38 (2010), no. 5, 2023--2065. 

\bibitem{TVhard} T. Tao and V. Vu,  Random matrices: the distribution of the smallest singular values, {\it  Geom. Funct. Anal.} 20 (2010), no. 1, 260--297.

\bibitem{TVnec} T. Tao and V. Vu, Random matrices: Localization of the eigenvalues and the necessity of four moments, {\it to appear in Acta Math. Vietnamica}. 

\bibitem{TV-vector} T. Tao and V. Vu, Random matrices: Universal properties of eigenvectors, {\tt arXiv:arXiv:1103.2801}.

\bibitem{TVlocal1}
T. Tao and V. Vu, Random matrices: Universality of the local eigenvalue statistics, {\it Acta Mathematica}
 206 (2011), 127-204.

\bibitem{TVlocal3} 
T. Tao, V. Vu, Random covariance matrices: university of local statistics of eigenvalues, {\it to appear in Annals of Probability}. 

\bibitem{TVmeh} T. Tao and V. Vu, The Wigner-Dyson-Mehta bulk universality conjecture for Wigner matrices, {\it  Electronic Journal of Probability}, 
vol 16 (2011), 2104-2121. 



\bibitem{TVdet} { T.~Tao} and {V.~Vu}, {On random $\pm 1$ matrices: singularity and determinant}, \emph{Random Structures Algorithms} \textbf{28} (2006), 1--23.

\bibitem{TVdet2}  T. Tao and V. Vu, A central limit theorem for the determinant of a Wigner matrix, {\it preprint} http://arxiv.org/pdf/1111.6300.pdf.

\bibitem{TVeigenvector} T. Tao and V. Vu, Random matrices: Universal properties of eigenvectors,
 {\it to appear in Random Matrices: Theory and Applications}. 
 
 \bibitem{TVcont}  T. Tao and V. Vu, Random matrices: Sharp concentration  of eigenvalues, {\it preprint} http://arxiv.org/pdf/1201.4789.pdf.

\bibitem{TW} C. Tracy and H. Widom, On orthogonal and symplectic matrix ensembles, {\it Commun. Math. Phys.} 177 (1996) 727--754.



 \bibitem{TWbook}  C. Tracy and H. Widom, Distribution functions for largest eigenvalues and their applications,  Proceedings of the International Congress of Mathematicians, Vol. I (Beijing, 2002), 587--596.

\bibitem{tran}
L. Tran, V. Vu, K. Wang, Sparse random graphs: Eigenvalues and Eigenvectors, {\it to appear in Random Structures and Algorithms.}


\bibitem{trotter}
H. Trotter, Eigenvalue distributions of large Hermitian matrices; Wigner's
semicircle law and a theorem of Kac, Murdock, and Szeg\"o, \emph{Adv. in Math.} \textbf{54}(1):67–
82, 1984.

\bibitem{Turan} P.~Tur\'an, {On a problem in the theory of determinants} (Chinese), \emph{Acta Math.~Sinica} \textbf{5} (1955), 41l--423. 



\bibitem{vershynin}
R. Vershynin, Invertibility of symmetric random matrices, {\it to appear in Random Structures and Algorithms}

\bibitem{voi2}
Voiculescu, D., Addition of certain non-commuting random variables, \emph{J. Funct. Anal.} \textbf{66} (1986), 323–-346.

\bibitem{voiculescu}
D. Voiculescu, Limit laws for random matrices and free products, \emph{Invent. Math.} \textbf{104} (1991), no. 1, 201–-220.

\bibitem{vonG} 
J. von Neumann and H. Goldstine, {\it Numerical inverting matrices of high order,}
{Bull. Amer. Math. Soc.} 53, 1021-1099, 1947.

\bibitem{Vunorm} V. Vu, Spectral norm of random matrices, {\it Combinatorica}  27 (6), 2007, 721-736.


%\bibitem{Lit} A. Litvak, A. Pajor, M. Rudelson and N. Tomczak-Jaegermann, Smallest singular value of random matrices and geometry of random polytopes, {\it  Adv. Math. } 195 (2005), no. 2, 491--523.

\bibitem{wang} K. Wang, Random covariance matrices: Universality of local statistics of eigenvalues up to the edge, {\it arXiv:1104.4832}

\bibitem{wig} P. Wigner,
On the distribution of the roots of certain symmetric matrices, {\it
The Annals of Mathematics} {\bf 67} (1958) 325-327.




    %\bibitem{Sos} A. Soshnikov,






%\bibitem{CV} K.~Costello and V.~Vu, \emph{Concentration of random determinants and permanent estimators}, Siam J.~Discrete Math 23 (2009), 1356-1371.

%\bibitem{GZ} A.~Guionnet and O.~Zeitouni, {\it Concentration of the spectral measure for large matrices}, %Electronic Communication in Probability 5  (2000), 119-136.

%ibitem{MO} M.~El Machkouri and L.~Ouchti, {\it Exact convergence rate in the central limit theorem for a class of martingale}, Bernoulli 13 (4), 2007, 981-999.




%\bibitem{Vu1} V. Vu, Spectral norm of random matrices, {\it Combinatorica, to appear}. A extended abstract was published in {\it STOC 2005}.


\end{thebibliography}
\end{document}